\newtheorem{lemma}{Lemma}[section]
\newtheorem{theorem}[lemma]{Theorem}
\newtheorem{coro}[lemma]{Corollary}
\newtheorem{rema}[lemma]{{\rm\bf Remark}}
\newtheorem{Def}[lemma]{Definition}
\newtheorem{ex}[lemma]{Example}
\newcommand{\charr}{{\rm char\,}}
\newcommand{\kk}{{\bf k}}
\newcommand{\Ann}{{\rm Ann}}
\renewcommand{\Im}{{\rm Im\,}}
\newcommand{\la}{\langle}
\newcommand{\ra}{\rangle}
\renewcommand{\le}{\leqslant}
\renewcommand{\ge}{\geqslant}
\numberwithin{equation}{section}
\begin{document}

\title{Anticommutative Engel algebras of the first five levels.}

\author{Yury Volkov}
\maketitle
\begin{abstract}
Anticommutative Engel algebras of the first five degeneration levels are classified. All algebras appearing in this classification are nilpotent Malcev algebras.
\end{abstract}

{\bf Keywords:} level of algebra, orbit closure, degeneration, nilpotent algebra.

       \vspace{0.3cm}

{\it 2010 MSC}: 17A01, 14J10, 14L30.

       \vspace{0.3cm}

\section{Introduction}

Algebras in this paper are not assumed to be associative. The main object considered in this paper is the  degeneration of algebras.
Roughly speaking, the algebra $A$ degenerates to the algebra $B$ if there is a family of algebra structures parameterized by an element of the ground field such that infinitely many structures in the family represent $A$ and there exists a structure belonging to this family representing $B$. Note that the notion of a degeneration is closely related to the notions of contraction and deformation.

The notion of the level of an algebra was introduced in \cite{gorb91}. The algebra under consideration is an algebra of level $n$ if the maximal length of a chain of non-trivial degenerations starting at it equals $n$.
Roughly speaking, the level estimates the complexity of the multiplication of  the given algebra. For example, the unique algebra of the level zero is the algebra with zero multiplication. 

Anticommutative algebras of the first level were classified correctly in \cite{gorb91} and all algebras of the first level were classified in \cite{khud13} (see also \cite{IvaPal}). In \cite{gorb93} the author introduced the notion of the infinite level. The infinite level can be expressed in terms of the usual level, and because of this  the classification of algebras with a given infinite level is much easier than the classification of algebras with a given usual level. Anticommutative algebras of the second infinite level were classified in \cite{gorb93}.
The classification of algebras of the third infinite level given in the same paper occurs to be incorrect and can not be taken in account. Finally, associative, Lie, Jordan, Leibniz and nilpotent algebras of the level two were classified in \cite{khud15,khud17} before the full classification of algebras of the second level appeared in \cite{kpv19}.

This paper is a natural continuation of \cite{kpv19} and constitute the first natural step in the classification of algebras of the third level and of anticommutative algebras (or, more generally, algebras of the generation type $1$) of the first five levels.
Let us explain why this step is really constitute a part of these classifications. First of all, as explained in \cite{kpv19}, it is natural to classify algebras depending on their generation type, i.e. the maximal dimension of a one-generated subalgebra.
Two main cases in the classification of algebras of the third level are the algebras of generation type $1$ and  the algebras of generation type $2$. As it was shown in \cite{kpv19},  there is also the case of generation type $3$, but there are almost no algebras of level $3$ with generation type $3$ and we will leave this small part of classification for the paper where we will finish the classification of algebras of level $3$. The case of generation type $2$ requires some tedious calculations using the results of \cite{kpv17} and will be done in some of proceeding papers.
The algebras of generation type $1$ admit so-called one-dimensional standard In\"on\"u-Wigner contractions with respect to any element. These contractions are classified for algebras of generation type $1$ until level $5$ in \cite{kpv19}. In the same paper it is explained how obtain their classification until any level. It is natural for our aim to divide the algebras of generation type $1$ to classes depending on what one-dimensional standard In\"on\"u-Wigner contractions they admit. If for an  algebra $A$ of generation type $1$ a one-dimensional standard In\"on\"u-Wigner contraction of maximal possible level is nilpotent, then $A$ is anticommutative and Engel. Since the classification of  one-dimensional standard In\"on\"u-Wigner contractions presented in \cite{kpv19} is divided into nilpotent, solvable and non-solvable cases, it is natural to consider anticommutative Engel algebras first. This is exactly what we will do in this paper. Namely, we will classify these algebras until fifth level.
We will also present the classification of anticommutative Engel algebras of the first five infinite levels that will easily follow from our classification and will not differ from it very much. Note that the class of anticommutative Engel algebras includes the class of anticommutative nilpotent algebras. In this paper we will show that until fifth level these classes coincide.

Note that except the inclusion of one algebra of level four, the classification of anticommutative nilpotent algebras of the third infinite level that can be extracted from \cite{gorb93} is correct and coincides with the classification of anticommutative nilpotent algebras of the third level that we will obtain in this work. Thus, part of our results confirms the nilpotent part of the results of \cite{gorb93}. Contrariwise, the non-nilpotent part of the classification in \cite{gorb93} has more problems and will be corrected in our proceeding paper.

\section{Background on degenerations}

In this section we introduce some notation and recall some well known definitions and results about degenerations that we will need in this work.

All vector spaces in this paper are over some fixed algebraically closed field ${\bf k}$ and we write simply $dim$, $Hom$ and $\otimes$ instead of $dim_{{\bf k}}$, $Hom_{{\bf k}}$ and $\otimes_{{\bf k}}$.
An algebra in this paper is simply a vector space with a bilinear binary operation called multiplication. This operation does not have to be associative unlike to the case of usual algebras. For an algebra $A$ and $a,b\in A$ we will denote the result of the application of multiplication to the pair $(a,b)$ by $ab$. We will write also $a^2$ instead of $aa$. If $V$ is a linear space and $S$ is a subset of $V$, then we denote by $\la S\ra$ the subspace of $V$ generated by $S$. For two subspace $A_1,A_2$ of $A$ we set
$A_1A_2:=\la\{a_1a_2\}_{a_1\in A_1,a_2\in A_2}\ra$.

Let $V$ be a fixed $n$-dimensional space. Then the {\it set of $n$-dimensional algebra structures} on $V$ is $\mathcal{A}_n=Hom(V\otimes V, V)\cong V^*\otimes V^*\otimes V$. Any $n$-dimensional algebra can be represented by some element of $\mathcal{A}_n$. Two algebras are isomorphic if and only if they can be represented by the same structure.
The set $\mathcal{A}_n$ has a structure of the affine variety ${\bf k}^{n^3}$. There is a natural action of the group $GL(V)$ on $\mathcal{A}_n$ defined by the equality $ (g * \mu )(x\otimes y) = g\mu(g^{-1}x\otimes g^{-1}y)$ for $x,y\in V$, $\mu\in \mathcal{A}_n$ and $g\in GL(V)$.
Two structures represent the same algebra if and only if they belong to the same orbit. By $\kk^n$ we will denote the $n$-dimensional algebra with zero multiplication and the structure representing it.
For brevity, we will write $\mu(u,v)$ or, if the structure $\mu$ is clear from the context, even $uv$ instead of $\mu(u\otimes v)$ for $u,v\in V$.

Let $A$ and $B$ be $n$-dimensional algebras. Suppose that $\mu,\chi\in \mathcal{A}_n$ represent $A$ and $B$ respectively. We say that $A$ {\it degenerates} to $B$ and write $A\to B$ if $\chi$ belongs to $\overline{O(\mu)}$. Here, as usually, $O(X)$ denotes the orbit of $X$ and $\overline{X}$ denotes the closure of $X$. We also write $A\not\to B$ if $\chi\not\in\overline{O(\mu)}$. We say that the degeneration $A\to B$ is {\it trivial} if $A\not\cong B$. We will write $A\xrightarrow{\not\cong} B$ to emphasize that the degeneration $A\to B$ is not trivial.

Whenever an $n$-dimensional space named $V$ appears in this paper, we assume that there is some fixed basis $e_1,\dots, e_n$ of $V$. In this case, for $\mu\in\mathcal{A}_n$, we denote by $\mu_{i,j}^k$ ($1\le i,j,k\le n$) the structure constants of $\mu$ in this fixed basis, i.e. elements of $\bf k$ such that $\mu(e_i,e_j)=\sum\limits_{k=1}^n\mu_{i,j}^ke_k$. To prove degenerations and nondegenerations we will use the same technique that has been already used in \cite{S90} and \cite{kppv, kpv17}. In particular, we will be free to use \cite[Lemma 1]{kppv} and facts that easily follow from it.
This lemma asserts the following. If $A\to B$, $\mu\in\mathcal{A}_n$ represents $A$ and there is a closed subset $\mathcal{R}\subset\mathcal{A}_n$ invariant under lower triangular transformations of the basis $e_1,\dots,e_n$ such that $\mu\in\mathcal{R}$, then there is a structure $\chi\in\mathcal{R}$ representing $B$. Invariance under lower triangular transformations of the basis $e_1,\dots,e_n$ means that if $\omega\in\mathcal{R}$ and $g\in GL(V)$ has a lower triangular matrix in the basis $e_1,\dots,e_n$, then $g*\omega\in\mathcal{R}$ (see \cite{kppv} for a more detailed discussion). The mentioned lemma implies, in particular, that if $A\to B$, then $dim\,A^2\ge dim\,B^2$. We will denote by $\Ann(A)$ the set of such $a\in A$ that $aA=Aa=0$. Another consequence of the mentioned lemma states that if $A\to B$, then $dim\,\Ann(A)\le dim\,\Ann(B)$. More generally, let $\lambda\in\mathcal{A}_n$ be an $n$-dimensional algebra structure. For two subspaces $U,W$ of $V$ we will write $\lambda(U,W)$ for the subspace of $V$ generated by $\lambda(u,w)$ for all $u\in U$ and $w\in W$. We also set $V_i=\langle e_i,\dots,e_n\rangle$ for $1\le i\le n+1$. Then a condition of the form $\lambda(V_i,V_j)\subset V_k$ determines a closed subset of $\mathcal{A}_n$ invariant under lower triangular transformations of the basis $e_1,\dots,e_n$. In particular, the condition $dim\,A^2\le m$ is equivalent to the fact that $A$ can be represented by a structure from the set $\{\lambda\in\mathcal{A}_n\mid \lambda(V,V)\subset V_{n-m+1}\}$
and the condition $dim\,\Ann(A)\ge m$ is equivalent to the fact that $A$ can be represented by a structure from the set $\{\lambda\in\mathcal{A}_n\mid \lambda(V,V_{n-m+1})+\lambda(V_{n-m+1},V)=0\}$. If there are integer $s$ and $1\le i_1,\dots,i_s,j_1,\dots,j_s,k_1,\dots,k_s\le n$ such that
$$\mathcal{R}=\{\lambda\in\mathcal{A}_n\mid \lambda(V_{i_1},V_{j_1})\subset V_{k_1},\dots,\lambda(V_{i_s},V_{j_s})\subset V_{k_s}\}$$
satisfies the conditions $O(\mu)\cap \mathcal{R}\not=\varnothing$ and $O(\chi)\cap \mathcal{R}=\varnothing$, where $\mu$ represents $A$ and $\chi$ represents $B$, then we will write $A\not\to_{(i_1,j_1,k_1),\dots,(i_s,j_1,k_1)}B$ to emphasize a reason for the corresponding non-degeneration. Note that $dim\,A^2=m<dim\,B^2$ is equivalent to $A\not\to_{(1,1,n-m+1)}B$ and $dim\,\Ann(A)=m>dim\Ann(B)$ is equivalent to $A\not\to_{(n-m+1,1,n+1),(1,n-m+1,n+1)}B$. In some more complicated situation we will define $\mathcal{R}$ explicitly.

In fact, in this paper we will mainly consider the closed subvariety $\mathcal{AC}_n$ of the variety $\mathcal{A}_n$ formed by anticommutative algebra structures, i.e. structures $\mu$ such that $\mu_{i,i}^k=0$ and $\mu_{i,j}^k+\mu_{j,i}^k=0$ for all $1\le i,j,k,\le n$. In this case we will describe $\mathcal{R}$ by an expression of the form $\mathcal{R}=\{\lambda\in\mathcal{AC}_n\mid \dots\}$. Note that many things simplify in the anticommutative case. For example, $dim\,\Ann(A)=m>dim\Ann(B)$ is equivalent to $A\not\to_{(1,n-m+1,n+1)}B$ for anticommutative algebras $A$ and $B$.

To prove degenerations, we will use the technique of contractions. Namely, let $\mu,\chi\in \mathcal{A}_n$ represent $A$ and $B$ respectively. Suppose that there are some elements $E_i^t\in V$ ($1\le i\le n$, $t\in{\bf k}^*$) such that $E^t=(E_1^t,\dots,E_n^t)$ is a basis of $V$ for any $t\in{\bf k}^*$ and the structure constants of $\mu$ in this basis are $\mu_{i,j}^k(t)$ for some polynomials $\mu_{i,j}^k(t)\in{\bf k}[t]$. If $\mu_{i,j}^k(0)=\chi_{i,j}^k$ for all $1\le i,j,k\le n$, then $A\to B$. To emphasize that the {\it parameterized basis} $E^t=(E_1^t,\dots,E_n^t)$ ($t\in{\bf k}^*$) gives a degeneration between algebras represented by the structures $\mu$ and $\chi$, we will write $\mu\xrightarrow{E^t}\chi$.
Usually we will simply write down the parameterized basis explicitly above the arrow.

An important role in this paper will be played by a particular case of a degeneration called a {\it standard In\"on\"u-Wigner contraction} (see \cite{IW}). We will call it {\it IW contraction} for short.
Suppose that $A_0$ is an $m$-dimensional subalgebra of the $n$-dimensional algebra $A$ and $\mu\in\mathcal{A}_n$ is a structure representing $A$ such that $A_0$ corresponds to the subspace $\langle e_1,\dots,e_m\rangle$ of $V$.
Then $\mu\xrightarrow{(e_1,\dots,e_m,te_{m+1},\dots,te_n)}\chi$ for some $\chi\in\mathcal{A}_n$ and the algebra $B$ represented by $\chi$ is called the IW contraction of $A$ with respect to $A_0$. The isomorphism class of the resulting algebra does not depend on the choice of the structure $\mu$ satisfying the condition stated above and always has an ideal $I\subset B$ and a subalgebra $B_0\subset B$ such that $B=B_0\oplus I$ as a vector space, $I^2=0$ and $B_0\cong A_0$ as an algebra. We will call an algebra of such a form a {\it trivial singular extension} of $A_0$ by ${\bf k}^{n-m}$.

To finish this section, let us introduce the notion of a level related to the notion of a degeneration. This notion will be the main object of interest in this paper.

\begin{Def}{\rm
The {\it level} of the $n$-dimensional algebra $A$ is the maximal number $m$ such that there exists a sequence of non-trivial degenerations $A\xrightarrow{\not\cong}A_{m-1}\xrightarrow{\not\cong}\dots\xrightarrow{\not\cong}A_1\xrightarrow{\not\cong}A_0$ for some $n$-dimensional algebras $A_i$ ($0\le i\le m-1$). The level of $A$ is denoted by $lev(A)$. The {\it infinite level} of the algebra $A$ is the number defined by the equality $lev_{\infty}(A)=\lim\limits_{m\to\infty}lev(A\oplus\kk^m)$.
}\end{Def}

The aim of this paper is to classify up to isomorphism the anticommutative Engel algebras with level not greater than $5$. This will automatically give us also the classification of algebras with infinite level not greater than $5$ in the same variety.

\section{Generation type one and one-dimensional IW contractions}

In this section we recall some general ideas of \cite{kpv19} on how to classify algebras of small levels. Let us first recall the definition of a generation type.

\begin{Def}{\rm
Let $A$ be an $n$-dimensional algebra. For $a\in A$, we denote by $A(a)$ the subalgebra of $A$ generated by $a$. The {\it generation type} of $A$ is the dimension of a maximal $1$-generated subalgebra of $A$, i.e. the number $G(A)$ defined by the equality $G(A)=\max\limits_{a\in A}\big(dim\,A(a)\big)$.
}\end{Def}

By the results of \cite{kpv19}, if $G(A)\ge 3$ for an $n$-dimensional algebra $A$, then $lev(A)\ge G(A)$. Moreover, there are no many algebras with $G(A)=3$ that can have level $3$ and all of them are described in the same work. Thus, the main problems in the classification of algebras of level $3$ are the classifications of algebras of level $3$ with generation types one and two. Moreover, a more detailed consideration would show that this cases constitute the main parts of classifications of algebras of levels not greater than $5$. The case of generation type $2$ will be considered in one of our proceeding works and at this moment it seems to be difficult to classify algebras with generation type $2$ that have levels four and five. Nevertheless, we are going to classify algebras with generation type one that have levels not greater than $5$. The first part of this classification we present in this paper.

\begin{Def}{\rm
The algebra $A$ is called {\it anticommutative} if $a^2=0$ for any $a\in A$.  The algebra $A$ is called {\it nilpotent} if there exists $m$ such that $A^m=0$, where we define $A^i$ by induction on $i\ge 1$ in the following way. We set $A^1=1$ and $A^i=A(A^{i-1})+(A^{i-1})A$ for $i>1$. The algebra $A$ is called {\it $m$-Engel} if $(L_a)^m=0$ for any $a\in A$. We will call the algebra $A$ Engel if it is $m$-Engel for some $m>0$.
}\end{Def}

Let $A$ be an $n$-dimensional algebra. If $G(A)=1$, then for any $a\in A$ the IW contraction of $A$ with respect to $A(a)$ is defined. We will denote the resulting algebra by $IW_a(A)$. Algebras of the form $IW_a(A)$ with $G(A)=1$ and $a\in A$ were studied in \cite{kpv19}. Their degenerations are well understood due to the results of the last mentioned paper. It is natural to consider separately the case where $IW_a(A)$ is nilpotent for any $a\in A$ and the case where there exists $a\in A$ such that $IW_a(A)$ is not nilpotent. Since in the first case the algebra $A$ clearly does not have idempotents, it is anticommutative. During this paper, for $a\in A$, we will denote by $L_a$ the operator of left multiplication by $a$, i.e. $L_a$ is a linear map from $A$ to itself defined by the equality $L_a(b)=ab$ for $b\in A$.
For anticommutative $A$, the nilpotence of $IW_a(A)$ is equivalent to the nilpotence of the operator induced by $L_a$ on the space $A/\la a\ra$. Note that $dim\,A/\la a\ra=n-1$. Hence, if $IW_a(A)$ is nilpotent, then $L_a^k(A)\subset \la a\ra$ for some integer $0<k<n$. On the other hand, if $L_a^k(b)=\alpha a$ for some $b\in A$ and $\alpha\in\kk^*$, then $IW_{L_a^{k-1}(b)}(A)$ is not nilpotent that contradicts our assumptions.
Thus, $A$ is Engel and the minimal integer $m$ such that $(L_a)^m=0$ for all $a\in A$ is the same as the minimal integer such that $IW_a(A)^{m+1}=0$ for all $a\in A$.
Thus, the consideration of algebras with generation type one that have only nilpotent one-dimensional IW contractions is equivalent to the consideration of anticommutative Engel algebras. This motivated us to classify first anticommutative Engel algebras until  the fifth level.

\begin{rema}\label{nileng} It is clear that any nilpotent algebra is Engel. It follows also from \cite[Theorem 4]{Kuz} that any finite dimensional anticommutative $3$-Engel algebra is nilpotent.
On the other hand, due to the examples of \cite{KH} finite dimensional anticommutative $4$-Engel algebra does not have to be nilpotent.
\end{rema}

\begin{lemma}\label{maxcont} Let $A$ be an $n$-dimensional anticommutative Engel algebra. There exists $c\in A$ such that $IW_c(A)\to IW_a(A)$ for any $a\in A$.
\end{lemma}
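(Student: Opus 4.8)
The plan is to show that among the algebras $IW_a(A)$, $a \in A$, there is a maximal one with respect to degeneration, and that "maximal" is witnessed by a Zariski-generic choice of $a$. First I would recall from the discussion preceding the lemma that for anticommutative $A$ the algebra $IW_a(A)$ is the trivial singular extension of the (one- or zero-dimensional) subalgebra $A(a) = \langle a\rangle$ by $\kk^{n-1}$; since $a^2 = 0$, this means $IW_a(A)$ is, up to isomorphism, determined by the conjugacy class (or rather the similarity data) of the nilpotent operator $\bar L_a$ induced by $L_a$ on $A/\langle a\rangle$, together with the one extra datum of how $L_a$ acts back into $\langle a\rangle$. More precisely, $IW_a(A)$ has a basis in which multiplication is concentrated in $L_a$ acting on the complement, all other products vanishing, so its isomorphism type is governed by the Jordan type of $\bar L_a$ (a partition of $n-1$, padded appropriately). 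The key point, which I would extract from \cite{kpv19}, is that for two such "$L$-type" algebras the degeneration $IW_a(A) \to IW_b(A)$ holds if and only if the Jordan type of $\bar L_a$ dominates that of $\bar L_b$ in the dominance order on partitions (this is the classical picture of orbit closures of nilpotent matrices, transported to $\AAA_n$ via the trivial-extension construction).

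Next I would argue that a generic $a$ maximizes the Jordan type. Consider the map $a \mapsto \bar L_a$ as a morphism from $A$ (an affine space) to the space of nilpotent operators on an $(n-1)$-dimensional space — more carefully, for each $a$ one works on $A/\langle a\rangle$, but one can fix this by noting that the ranks $\mathrm{rk}\,(L_a)^i$ for $1 \le i \le n$ are each given, on the locus where they are relevant, by the non-vanishing of certain minors, hence are lower-semicontinuous functions of $a$; equivalently $\dim \Ker (L_a)^i$ is upper-semicontinuous. Therefore the function $a \maps$ (Jordan type of $\bar L_a$), read through the partial order where larger rank-sequence means larger type, attains its maximum on a dense open subset $U \subseteq A$, and any two elements of $U$ give the same Jordan type, hence isomorphic $IW_a(A)$. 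Pick $c \in U$. For an arbitrary $a \in A$, the rank inequalities $\mathrm{rk}\,(L_c)^i \ge \mathrm{rk}\,(L_a)^i$ hold for all $i$, which by the correspondence above means the Jordan type of $\bar L_c$ dominates that of $\bar L_a$, hence $IW_c(A) \to IW_a(A)$.

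The cleanest way to package the last step, and the one I would actually write, avoids any explicit partition combinatorics: apply \cite[Lemma 1]{kppv} directly. Choose a basis $e_1, \dots, e_n$ of $V$ adapted so that $e_n$ represents $c$ (or $a$), realize $IW_a(A)$ inside the closed set $\RR_a = \{\lambda \in \AAA_n \mid \lambda(V,V) \subset \langle \text{image of } L_a\text{-stuff}\rangle, \dots\}$ cut out by the vanishing of appropriate products and by the rank conditions $\dim \lambda(V, \cdot)^{(i)} \le \mathrm{rk}\,(L_c)^i$, all of which are closed and lower-triangular-invariant; since $IW_c(A) \to$ (anything these ranks allow) and $IW_a(A)$ lies in $\RR$, one concludes $IW_c(A) \to IW_a(A)$.

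The main obstacle I anticipate is bookkeeping around the one-dimensional subspace $\langle a\rangle$: the operator $L_a$ lives on all of $A$ but the relevant invariant is its action modulo $\langle a\rangle$, and one must check that the semicontinuity argument survives passing to this quotient uniformly in $a$ (the quotient space itself varies with $a$). I would handle this by working with $L_a$ on $A$ directly and using that, since $a \in \Ker L_a$, the Jordan type of $\bar L_a$ is recovered from that of $L_a$ by deleting one box from the longest row — a uniform operation — so upper-semicontinuity of $\dim \Ker (L_a)^i$ on $A$ transfers to the quotient invariant. Everything else is routine: the classification of degenerations among trivial singular extensions of $\langle a\rangle$ by $\kk^{n-1}$ is already available in \cite{kpv19}, and the existence of a dense open locus on which a semicontinuous, partially-ordered invariant is constant and maximal is standard.
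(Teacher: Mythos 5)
Your core argument is correct and is essentially the paper's proof: both rest on the criterion from \cite{kpv19} that $IW_a(A)\to IW_b(A)$ iff $\mathrm{rk}\,(L_a)^m\ge \mathrm{rk}\,(L_b)^m$ for all $m$, together with the fact that each condition $\mathrm{rk}\,(L_a)^m\ge k$ cuts out an open subset of the irreducible affine space $A$; your version picks $c$ in the (nonempty) intersection of the finitely many open loci where each rank is maximal, while the paper picks a degeneration-maximal $c$ and reaches a contradiction along the pencil $c+\alpha b$ --- the same idea, marginally repackaged. One side remark of yours is false but harmless: for $v$ in the kernel of a nilpotent $N$, the Jordan type of the induced operator on $V/\langle v\rangle$ is \emph{not} obtained by deleting a box from the longest row (e.g.\ for type $(2,1)$ and $v=Nu+w$ the quotient has type $(2)$); this is irrelevant here because the cited criterion is already stated in terms of the ranks of $(L_a)^m$ on $A$ itself, which for anticommutative Engel algebras coincide with those of the quotient operator since $a\notin L_a^m(A)$.
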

\begin{proof} For $a\in A$, we denote by $r_m(a)$ the rank of the operator $(L_a)^m$. Due to the results of \cite{kpv19}, $IW_a(A)\to IW_b(A)$ if and only if $r_m(a)\ge r_m(b)$ for any $m>0$. Let us pick $c$ such that $IW_a(A)\not\to IW_c(A)$ whenever  $IW_a(A)\not\cong IW_c(A)$ for some $a\in A$. Suppose that $IW_c(A)\not\to IW_b(A)$ for some $b\in A$. This means that $r_{m_0}(b)>r_{m_0}(c)$ for some $m_0>0$. Let us consider the elements $c+\alpha b$ with $\alpha\in\kk$. Since the condition $r_m(x)\ge max(r_m(b), r_m(c))$ determines an open subset of $A$ considered as a affine variety $\kk^n$ with Zariski topology, for a fixed $m$, we have $r_m(c+\alpha b)\ge max(r_m(b), r_m(c))$ for all $\alpha\in\kk$ except a finite number of values. Since the mentioned inequality is satisfied for $m>n$, there exists $\alpha\in\kk$ such that $r_m(c+\alpha b)\ge max(r_m(b), r_m(c))$ for any $m>0$. Since $r_{m_0}(c+\alpha b)>r_{m_0}(c)$, we have $IW_{c+\alpha b}(A)\not\cong IW_c(A)$. On the other hand, it follows from the argument above that $IW_{c+\alpha b}(A)\to IW_c(A)$ that contradicts the choice of $c$.
\end{proof}

It follows from Lemma \ref{maxcont} that any $n$-dimensional anticommutative Engel algebra $A$ has a unique one-dimensional IW contraction of maximal level. We will denote this contraction by $IW_1^{max}(A)$. We will use in this paper also the next auxiliary fact.

\begin{lemma} Let $A$ and $B$ be $n$-dimensional anticommutative Engel algebras. If $A\to B$, then $IW_1^{max}(A)\to IW_1^{max}(B)$.
\end{lemma}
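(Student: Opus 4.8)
The plan is to reduce the statement to the monotonicity of degeneration under the IW-contraction construction, combined with the rank characterization of degenerations between one-dimensional IW contractions that was quoted from \cite{kpv19} in the proof of Lemma \ref{maxcont}. Recall that for $a\in A$ we have $IW_a(A)\to IW_b(A)$ if and only if $r_m(a)\ge r_m(b)$ for all $m>0$, where $r_m(x)$ is the rank of $(L_x)^m$. So it suffices to produce, starting from $A\to B$, an element $d\in B$ and the element $c\in A$ delivering $IW_1^{max}(A)=IW_c(A)$ such that $IW_c(A)\to IW_d(A)$ and $IW_d(A)\to IW_1^{max}(B)$; the first of these follows from the rank criterion once we know $r_m(c)\ge r_m(d)$ for all $m$, and the second is automatic from Lemma \ref{maxcont} applied to $B$ (indeed $IW_d(B)\to IW_1^{max}(B)$ would be false in general, so in fact we want $d$ itself to realize $IW_1^{max}(B)$, i.e. $d$ is the distinguished element of $B$). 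Thus the real content is: there is an element $d\in B$ realizing the maximal IW contraction of $B$ with $r_m(c)\ge r_m(d)$ for all $m>0$, where $c$ realizes the maximal IW contraction of $A$.

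First I would fix structures $\mu,\chi\in\mathcal{AC}_n$ representing $A$ and $B$ with $\chi\in\overline{O(\mu)}$, and fix the distinguished element $c\in A$ from Lemma \ref{maxcont}; after a change of basis we may assume $c=e_1$, so that the numbers $r_m(c)$ are the ranks of the $m$-th powers of the matrix $(L_{e_1})$ computed from $\mu$. The key point is that the function on $\mathcal{AC}_n$ sending a structure $\lambda$ to the tuple $\big(\mathrm{rank}\,(L^\lambda_{e_1})^m\big)_{m\ge 1}$ is lower semicontinuous in each coordinate and invariant under the $GL(V)$-action in the sense that $\mathrm{rank}\,(L^{g*\lambda}_{ge_1})^m=\mathrm{rank}\,(L^\lambda_{e_1})^m$. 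Consequently the set $\mathcal{R}=\{\lambda\in\mathcal{AC}_n\mid \mathrm{rank}\,(L^\lambda_{e_1})^m\le r_m(c)\ \text{for all }m\ge 1\}$ is closed (only finitely many conditions are non-vacuous) and $GL$-stable along the line $e_1$; since $\mu\in\mathcal{R}$, the Lemma-1-of-\cite{kppv} machinery (in the form recalled in the Background section: passing to a structure in a closed, lower-triangular-invariant subset) produces a structure $\chi'\in\mathcal{R}$ representing $B$, i.e. an element $d\in B$ — the image of $e_1$ — with $r_m(d)=\mathrm{rank}\,(L^{\chi'}_{e_1})^m\le r_m(c)$ for all $m>0$.

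It remains to upgrade this $d$ to the distinguished element of $B$. Let $d'\in B$ be the element with $IW_{d'}(B)=IW_1^{max}(B)$ from Lemma \ref{maxcont}. Then $IW_{d'}(B)\to IW_d(B)$, which by the rank criterion gives $r_m(d')\ge r_m(d)$ for all $m$; hence $r_m(c)\ge r_m(d)$ is not quite what I need — I need $r_m(c)\ge r_m(d')$. To get this I would instead run the semicontinuity argument more carefully: the correct statement is that the ranks can only drop along a degeneration in a way compatible with taking the \emph{best} element. Concretely, consider on $\mathcal{AC}_n$ the function $R_m(\lambda)=\max_{x\in V}\mathrm{rank}\,(L^\lambda_x)^m$; this is again lower semicontinuous and $GL$-invariant, and $R_m(\mu)=r_m(c)$ while $R_m(\chi)=r_m(d')$ by the maximality in Lemma \ref{maxcont}. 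Lower semicontinuity of $R_m$ under specialization in the orbit closure then yields $R_m(\chi)\le R_m(\mu)$, i.e. $r_m(d')\le r_m(c)$ for every $m>0$, which by the rank criterion gives $IW_c(A)\to IW_{d'}(B)$, that is $IW_1^{max}(A)\to IW_1^{max}(B)$, as desired. The main obstacle is precisely this last step: verifying that $R_m$ is lower semicontinuous on $\mathcal{AC}_n$ (equivalently, that the locus $\{\lambda\mid R_m(\lambda)\ge k\}$ is open), since $R_m$ is a maximum over the whole variety of left-multiplication powers and one must argue that the generic value of $\mathrm{rank}\,(L^\lambda_x)^m$ over $x$ behaves upper semicontinuously as $\lambda$ varies — this can be done by a standard fibre-dimension/constructibility argument applied to the incidence variety $\{(\lambda,x)\mid \mathrm{rank}\,(L^\lambda_x)^m\le k-1\}$, but it is the one point that needs care rather than being a direct invocation of an earlier lemma.
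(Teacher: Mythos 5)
Your final paragraph is exactly the paper's argument, which is phrased contrapositively: using the rank criterion from the proof of Lemma \ref{maxcont}, if $IW_1^{max}(A)\not\to IW_1^{max}(B)$ then for some $m$ and $R$ the set of structures $\lambda\in\mathcal{AC}_n$ such that $\mathrm{rank}\,(L_x^\lambda)^m<R$ for every $x$ is a closed $GL(V)$-invariant subset containing a structure for $A$ but meeting $O(\chi)$ trivially, whence $A\not\to B$; so your direct semicontinuity formulation of $R_m(\lambda)=\max_x\mathrm{rank}\,(L_x^\lambda)^m$ is the same proof (and your middle paragraph, which you rightly abandon, is not needed). The one point you flag as delicate is easier than a constructibility or fibre-dimension argument: each $k\times k$ minor of $(L_x^\lambda)^m$ is a polynomial in the coordinates of $x$ with coefficients polynomial in $\lambda$, and over the infinite field $\kk$ its vanishing for all $x$ is equivalent to the vanishing of all those coefficients, so $\{\lambda\mid R_m(\lambda)<k\}$ is cut out by polynomial equations and is closed.
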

\begin{proof} Let us denote by $L^B_b:B\rightarrow B$ the  operator of left multiplication by $b\in B$ and  by $L^A_a:A\rightarrow A$ the operator of left multiplication by $a\in A$.
If $IW_1^{max}(A)\not\to IW_1^{max}(B)$, then there is some $b\in B$ and integer $m$ such that the rank $R$ of $(L_b^B)^m$ is greater than  the rank of $(L_a^A)^m$ for any $a\in A$. It is not difficult to see that the set of structures representing algebras $C$ such that the rank of $(L_c^C)^m$ is less than $R$ for any $c\in C$ is a closed subset of $\mathcal{AC}_n$. It is clear that $A$ can be represented by a structure from this subset and $B$ cannot. Thus, $A\not\to B$.
\end{proof}

\section{Nilpotent one-dimensional IW contractions of small levels}

From here on we consider only anticommutative Engel algebras. Any algebra that will appear is assumed to be so if the opposite is not stated.

Our strategy is to classify separately algebras with different $IW_1^{max}(A)$. Note that $lev\big(IW_1^{max}(A)\big)\le lev(A)$, and hence to classify anticommutative Engel algebras of the first five levels, we need the classification of their possible one-dimensional IW contractions until level five. Such a classification is presented in \cite{kpv19} and we give it here with small changes corresponding to permutations of basic elements.

\begin{center}
Table 1. {\it Nilpotent one-dimensional IW contractions of algebras with generation type $1$ of the first $5$ levels.}\\
\begin{tabular}{|l|l|l|l|}
\hline
{\rm level}&{\rm notation}&{\rm multiplication table}&{\rm dimension}\\
\hline
{\rm 1}&$\begin{array}{l}{\bf n}_3\end{array}$&$\begin{array}{l}e_1e_2=e_n\end{array}$&$\begin{array}{l}n\ge 3\end{array}$\\
\hline
\hline
{\rm 2} &$\begin{array}{l}T^3\vspace{0.1cm}\\T^{2,2}\end{array}$ & $\begin{array}{l}e_1e_2=e_3,\,\,e_1e_3=e_4\vspace{0.1cm}\\e_1e_2=e_{n-1},\,\,e_1e_3=e_n\end{array}$&$\begin{array}{l}n=4\vspace{0.1cm}\\n\ge 5\end{array}$\\
\hline
\hline
{\rm 3} &$\begin{array}{l}T^3\vspace{0.1cm}\\T^{2,2,2}\end{array}$ & $\begin{array}{l}e_1e_2=e_3,\,\,e_1e_3=e_n\vspace{0.1cm}\\e_1e_{i+1}=e_{i+n-3},\,\,1\le i\le 3\end{array}$&$\begin{array}{l}n\ge 5\vspace{0.1cm}\\n\ge 7\end{array}$\\
\hline
\hline
{\rm 4}&$\begin{array}{l}T^4\vspace{0.1cm}\\T^{3,2}\vspace{0.1cm}\\T^{2,2,2,2}\end{array}$ & $\begin{array}{l}e_1e_i=e_{i+1},\,\,2\le i\le 4\vspace{0.1cm}\\
e_1e_2=e_{n-1},\,\,e_1e_3=e_4,\,\,e_1e_4=e_n\vspace{0.1cm}\\e_1e_{i+1}=e_{i+n-4},\,\,1\le i\le 4\end{array}$&$\begin{array}{l}n=5\vspace{0.1cm}\\n\ge 6\vspace{0.1cm}\\n\ge 9\end{array}$\\
\hline
\hline
{\rm 5}&$\begin{array}{l}T^4\vspace{0.1cm}\\T^{3,3}\vspace{0.1cm}\\T^{3,2,2}\vspace{0.1cm}\\T^{2,2,2,2,2}\end{array}$ & $\begin{array}{l}e_1e_2=e_3,\,\,e_1e_3=e_4,\,\,e_1e_4=e_n\vspace{0.1cm}\\e_1e_i=e_{i+1},\,\,i\in\{2,3,5,6\}\vspace{0.1cm}\\ e_1e_2=e_{n-2},\,\,e_1e_3=e_{n-1},\,\,e_1e_4=e_5,\,\,e_1e_5=e_n\vspace{0.1cm}\\e_1e_{i+1}=e_{i+n-5},\,\,1\le i\le 5\end{array}$&$\begin{array}{l}n\ge 6\vspace{0.1cm}\\n=7\vspace{0.1cm}\\n\ge 8\vspace{0.1cm}\\n\ge 11\end{array}$\\
\hline
\end{tabular}
\end{center}
Here and in all other multiplication tables, we give only nonzero products of the form $e_ie_j$ with $i<j$. The values of products of basic elements that are not determined by the given ones and the anticommutativity are zero.

It follows from the results of \cite{kpv19} that if $IW_1^{max}(A)$ can be represented by ${\bf n}_3$, then $A$ is isomorphic to one of the Heisenberg Lie algebras defined in the next table.
\begin{center}
Table 2. {\it Heisenberg Lie algebras.}\\
\begin{tabular}{|l|l|l|l|}
\hline
{\rm level}&{\rm notation}&{\rm multiplication table}&{\rm dimension}\\
\hline
{\rm m}&$\begin{array}{l}\eta_m\end{array}$&$\begin{array}{l}e_{2i-1}e_{2i}=e_{2m+1},\,\,1\le i\le m\end{array}$&$\begin{array}{l}n\ge 2m+1\end{array}$\\
\hline
\end{tabular}
\end{center}

This immediately gives the classification of anticommutative Engel algebras of level two.

\begin{theorem}[\cite{kpv19}] Let $A$ be an $n$-dimensional anticommutative Engel algebra of level two. Then either $n=4$ and $A$ can be represented by $T^3$ or $n\ge 5$ and  $A$ can be represented by $T^{2,2}$ or $\eta_2$.
\end{theorem}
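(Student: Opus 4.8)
The plan is to combine the results already recalled in the excerpt with the known classification of algebras of level at most two from \cite{kpv19}. Let $A$ be an $n$-dimensional anticommutative Engel algebra with $lev(A)=2$. Since $A$ is anticommutative Engel, by Lemma \ref{maxcont} the algebra $A$ has a well-defined maximal one-dimensional IW contraction $IW_1^{max}(A)$, and since $lev\big(IW_1^{max}(A)\big)\le lev(A)=2$, this contraction must be one of the algebras of level at most two appearing in Table 1 (together with the possibility ${\bf n}_3$ of level one, and the zero algebra of level zero, which however would force $A^2=0$ and $lev(A)=0$). Thus the first step is to enumerate the possibilities for $IW_1^{max}(A)$: it is ${\bf n}_3$, or $T^3$ (with $n=4$), or $T^{2,2}$ (with $n\ge 5$).

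The second step is to treat each case. If $IW_1^{max}(A)$ can be represented by ${\bf n}_3$, then by the sentence preceding Table 2, $A$ is one of the Heisenberg Lie algebras $\eta_m$; requiring $lev(A)=2$ forces $m=2$, giving $A\cong\eta_2$ with $n\ge 5$. If $IW_1^{max}(A)=T^3$, then $n=4$ and one must show that the only anticommutative Engel $4$-dimensional algebra whose maximal IW contraction is $T^3$ and which has level exactly two is $T^3$ itself; here I would use that any proper degeneration $A\xrightarrow{\not\cong}T^3$ would, combined with the fact that $T^3$ already has level two, give $lev(A)\ge 3$, so $A$ must already be (isomorphic to) $T^3$ — or more precisely, any $4$-dimensional algebra whose maximal IW contraction equals $T^3$ and is not itself $T^3$ would degenerate to $T^3$ nontrivially. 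Similarly, if $IW_1^{max}(A)=T^{2,2}$ with $n\ge 5$, the same reasoning (using $lev(T^{2,2})=2$ and the fact that $A\to IW_1^{max}(A)$) forces $A\cong T^{2,2}$.

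The key mechanism throughout is: for an anticommutative Engel algebra $A$ one has the nontrivial degeneration $A\xrightarrow{\not\cong}IW_1^{max}(A)$ whenever $A\not\cong IW_1^{max}(A)$, so $lev(A)\ge lev\big(IW_1^{max}(A)\big)+1$; hence if $lev(A)=lev\big(IW_1^{max}(A)\big)$ we must have $A\cong IW_1^{max}(A)$. Applying this with $lev(A)=2$ immediately reduces the problem to: which level-two entries of Table 1 (and which $\eta_m$) have level exactly two, which are precisely $T^3$ ($n=4$), $T^{2,2}$ ($n\ge 5$) and $\eta_2$ ($n\ge 5$). One should also check the low-dimensional edge cases ($n=2$: only the zero algebra, level $0$; $n=3$: ${\bf n}_3$ has level $1$; $n=4$: the candidates are ${\bf n}_3\oplus\kk$ of level $1$ and $T^3$ of level $2$), so that the dimension restrictions in the statement come out correctly.

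The main obstacle is the step asserting that if $IW_1^{max}(A)\cong B$ with $B\in\{T^3,T^{2,2}\}$ and $A\not\cong B$ then $A$ degenerates nontrivially to $B$: while $A\to IW_1^{max}(A)$ always holds by construction of the IW contraction, one must be sure this degeneration is nontrivial exactly when $A\not\cong B$, and one must rule out that $A$ could have $lev(A)=2$ while being a strictly ``larger'' algebra with the same maximal IW contraction — i.e. one needs that no anticommutative Engel algebra strictly above $T^3$ or $T^{2,2}$ in the degeneration order has level two. This is where the detailed analysis of \cite{kpv19} on degenerations of algebras of generation type one, and the fact that level two is very restrictive, does the real work; for the present theorem, however, it suffices to invoke the level-two classification of \cite{kpv19} directly, so the proof is essentially an organization of that classification restricted to the anticommutative Engel locus.
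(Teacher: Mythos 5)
Your proposal is correct and follows essentially the same route as the paper, which derives the theorem immediately from Table 1 together with the fact that $IW_1^{max}(A)\cong{\bf n}_3$ forces $A\cong\eta_m$, using exactly your mechanism that $A\xrightarrow{\not\cong}IW_1^{max}(A)$ is a non-trivial degeneration whenever $A\not\cong IW_1^{max}(A)$. The ``main obstacle'' you flag at the end is not actually a gap: any algebra with maximal IW contraction $T^3$ or $T^{2,2}$ that is not isomorphic to it degenerates non-trivially to it and hence has level at least three, which is precisely what your stated mechanism already gives, so no further analysis of the degeneration order is needed.
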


Since, for an algebra $A$ of level not greater than $5$ such that $IW_1^{max}(A)$ has level five, one obviously has $A\cong IW_1^{max}(A)$, we need to consider algebras with $IW_1^{max}(A)$ represented by a structure from the set $\{T^{2,2},T^{2,2,2},T^{2,2,2,2},T^3,T^{3,2},T^4\}$ to finish our classification. All of these algebras except $T^4$ are $3$-Engel, and hence nilpotent by Remark \ref{nileng}. Moreover, we need to consider the case of the algebra $T^4$ only in the dimension $5$.

Note that any nilpotent algebra $A$ can be represented by a structure $\mu\in \mathcal{A}_n$ such that $\mu_{i,j}^k=0$ for $k\le max(i,j)$. Note that if $v\in V$ is such that $IW_v(\mu)\cong IW_1^{max}(\mu)$, then $IW_{e_1+\alpha v}(\mu)\cong IW_1^{max}(\mu)$ for all $\alpha\in\kk$ except finite number of values (see the proof of Lemma \ref{maxcont}). Thus, we may assume that $\mu_{i,j}^k=0$ for $k\le max(i,j)$ and $IW_{e_1}(\mu)\cong IW_1^{max}(\mu)$ at the same time. Note that this properties are preserved with respect to lower triangular transformations $g\in GL(V)$ such that $g(e_1)=e_1$. Then we may assume that  $IW_{e_1}(\mu)$ is exactly one of the structures described in Table 1 up to some permutation of the basic elements $e_2,\dots,e_n$.
Later in all cases, except the case $IW_1^{max}(A)\cong T^4$, we will represent $A$ by a structure $\mu$ satisfying the described conditions.

\section{Algebras with maximal IW contraction $T^{2,2}$}

This section is devoted to the classification of algebras $A$ such that $IW_1^{max}(A)\cong T^{2,2}$.

Let us start with a general observation about algebras $A$ such that $IW_1^{max}(A)\cong T^{\overbrace{\scriptstyle 2,\dots,2}^{m}}$, where $m$ is an arbitrary integer.
Such an algebra can be represented by a structure $\mu$ such that $\mu(e_1,e_{i_r})=e_{j_r}$ and $\mu(e_1,e_i)=0$ for $i\not\in\{i_1,\dots,i_m\}$, where  $2\le i_1,\dots,i_m,j_1,\dots,j_m\le n$ are $2m$ different integers such that $i_r<j_r$ for all $1\le r\le m$.
Without loss of generality we will assume that $2\le i_1<\dots<i_m\le n$. We may assume that at the same time $\mu_{i,j}^k=0$ if $k\le max(i,j)$.

\begin{lemma}\label{2k2} In the settings described above
\begin{enumerate}
\item if $\mu_{i,j}^k\not=0$ for some $2\le i,j,k\le n$, then either $k\in\{j_1,\dots,j_m\}$ or $i,j\in\{i_1,\dots,i_m\}$;
\item for $1\le r,s\le m$ and $2\le i\le n$, one has $\mu_{i,j_s}^{j_r}+\mu_{i,i_s}^{i_r}=0$.
\end{enumerate}
\end{lemma}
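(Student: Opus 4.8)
The strategy is to exploit the associativity-type constraints that any algebra structure must satisfy once we know its one-dimensional IW contraction. The key point is that $IW_{e_1}(\mu)\cong IW_1^{max}(\mu)$ is a \emph{contraction}, not just a degeneration: it is obtained by the parameterized basis $(e_1, te_2,\dots,te_n)$, and contracting produces an algebra in which the ideal $I=\langle e_2,\dots,e_n\rangle$ satisfies $I^2=0$ while $e_1$ acts exactly as prescribed by $T^{2,\dots,2}$. The idea is to compare the product $\mu(e_1,-)$ on the one hand with what the contraction forces on the other, and then to track which structure constants $\mu_{i,j}^k$ can survive under the requirement that, after the substitution $e_i\mapsto te_i$ ($i\ge 2$), the limit $t\to 0$ reproduces $T^{2,\dots,2}$.

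For part (1): the contraction sends $\mu(e_i,e_j)=\sum_k\mu_{i,j}^k e_k$ (for $i,j\ge 2$) to $t^{\cdot}\sum_k\mu_{i,j}^k e_k$ after rescaling, and the exponent of $t$ in the transformed structure constant is what governs whether the term dies in the limit. Concretely, writing the contracted structure $\chi=IW_{e_1}(\mu)$, one has for $i,j\ge 2$ that $\chi(e_i,e_j)$ is the limit of $t\cdot\mu(e_i,e_j)$ rescaled, which forces $\chi(e_i,e_j)=0$ unless the products among $e_2,\dots,e_n$ in $\mu$ were already ``trivial at the right order.'' But $T^{2,\dots,2}$ has \emph{no} nonzero products of the form $e_ie_j$ with $i,j\ge 2$ at all; so the only way a term $\mu_{i,j}^k e_k$ (with $i,j\ge 2$) can be allowed is if it gets absorbed into the algebra of $e_1$-action, i.e. if $e_k$ lies among the ``output'' basis vectors $e_{j_1},\dots,e_{j_m}$, or if $e_i$ and $e_j$ both lie among the ``input'' vectors $e_{i_1},\dots,e_{i_m}$ (so that the product, viewed correctly, is part of what the $e_1$-multiplication already controls). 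Making this dichotomy precise is essentially a bookkeeping argument about the $t$-degrees; one should set it up by choosing the $t$-weights of the $e_i$ carefully (weight $0$ for $e_1$, and appropriate weights for the rest so that the $e_1$-action is weight-homogeneous), and then observe that a surviving cross-term $\mu_{i,j}^k$ with $i,j\ge 2$, $k\notin\{j_1,\dots,j_m\}$ would, after contraction, produce a nonzero product in $\chi$ not present in $T^{2,\dots,2}$ — unless $i,j\in\{i_1,\dots,i_m\}$, where the weight computation closes up consistently. I expect this weight/degree analysis to be the main obstacle: one must set the weights so that $IW_{e_1}(\mu)$ is genuinely homogeneous and then argue that non-homogeneous pieces of $\mu$ of the ``wrong'' type cannot be present, which requires being careful about how the IW contraction acts versus a general contraction.

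For part (2): this is a Jacobi-type (more precisely, a ``no idempotent in the contraction / left-multiplication commutation'') identity. The natural approach is to use that in $IW_1^{max}(A)=T^{2,\dots,2}$ the operator $L_{e_1}$ is the nilpotent map sending $e_{i_r}\mapsto e_{j_r}$ and everything else to $0$; in the original algebra $A$, the operator $L_{e_1}$ must agree with this ``to leading order,'' and the next-order term is controlled by the structure constants $\mu_{i,i_s}^{i_r}$ and $\mu_{i,j_s}^{j_r}$. Equivalently, I would compute $L_{e_1}\circ L_{e_i} - L_{e_i}\circ L_{e_1}$ (or the relevant anticommutative analogue, the operator attached to $\mu(e_1,e_i)$) applied to $e_{i_s}$, and compare the $e_{j_r}$- and $e_{i_r}$-components. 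Since $\mu(e_1,e_{i_s})=e_{j_s}$ and $\mu(e_1,e_{j_s})=0$ (the $e_1$-action is $T^{2,\dots,2}$ on the nose after our normalization), expanding $\mu\big(e_1,\mu(e_i,e_{i_s})\big)$ versus $\mu\big(e_i,\mu(e_1,e_{i_s})\big)=\mu(e_i,e_{j_s})$ and reading off the coefficient of the appropriate basis vector yields precisely $\mu_{i,j_s}^{j_r}+\mu_{i,i_s}^{i_r}=0$. The only subtlety is checking that no other terms contribute to that coefficient — here part (1) is exactly what is needed, since it restricts $\mu(e_i,e_{i_s})$ and $\mu(e_i,e_{j_s})$ enough that the only surviving contributions to the $e_{j_r}$ (resp. $e_{i_r}$) slot are the two named ones, together with terms that cancel by anticommutativity and by the vanishing $\mu(e_1,e_{j_s})=0$, $\mu_{i,j}^k=0$ for $k\le\max(i,j)$. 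So part (2) follows from part (1) plus a short explicit expansion; the real work is all in part (1).
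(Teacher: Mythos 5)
Your plan for part (1) rests on a premise that does not hold. You want to extract constraints on the $\mu_{i,j}^k$ with $i,j\ge 2$ from the requirement that the substitution $e_i\mapsto te_i$ ($i\ge 2$) reproduce $T^{2,\dots,2}$ in the limit. But that substitution multiplies every product $\mu(e_i,e_j)$ with $i,j\ge 2$ by $t^2$ while the target basis vectors are only rescaled by $t$, so every such product dies in the limit \emph{regardless} of the values of $\mu_{i,j}^k$; the single contraction $IW_{e_1}(\mu)\cong T^{2,\dots,2}$ therefore imposes no condition whatsoever on the products among $e_2,\dots,e_n$, and no choice of $t$-weights will change this. The actual constraint comes from the \emph{maximality} of $IW_{e_1}$ among all one-dimensional IW contractions: by the rank criterion used in Lemma \ref{maxcont}, one has ${\rm rank}\,L_a\le{\rm rank}\,L_{e_1}=m$ for every $a\in A$. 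The paper takes $v_\alpha=e_1+\alpha e_i$, observes $L_{v_\alpha}(e_{i_r})=e_{j_r}+\alpha\mu(e_i,e_{i_r})$ and $L_{v_\alpha}(e_j)=\alpha\mu(e_i,e_j)$, and exhibits an $(m+1)\times(m+1)$ minor of $L_{v_\alpha}$ that is a nonconstant polynomial in $\alpha$ whenever $\mu_{i,j}^k\ne0$ with $k\notin\{j_1,\dots,j_m\}$ and $j\notin\{i_1,\dots,i_m\}$; this forces ${\rm rank}\,L_{v_\alpha}\ge m+1$ for some $\alpha$, a contradiction. Your proposal never brings in left multiplication by elements other than $e_1$, so it cannot recover this.

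Part (2) of your proposal ends at the right computation, but the operator identity you need is the \emph{anti}commutator relation $L_{e_1}L_{e_i}+L_{e_i}L_{e_1}=0$, not the commutator $L_{e_1}L_{e_i}-L_{e_i}L_{e_1}$; there is no Jacobi identity available, since $A$ is only anticommutative. The relation follows once more from maximality: in the chosen normalization $(L_{e_1})^2=0$, hence ${\rm rank}\,(L_a)^2\le{\rm rank}\,(L_{e_1})^2=0$ for all $a\in A$, so $(L_a)^2=0$ for all $a$, and polarizing at $a=e_1+e_i$ gives the anticommutator identity. Evaluating it at $e_{i_s}$ and reading off the coefficient of $e_{j_r}$ yields $\mu_{i,i_s}^{i_r}+\mu_{i,j_s}^{j_r}=0$ directly (part (1) is not actually needed here). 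So the expansion you describe is correct once the identity is justified, but that justification — which again goes through the rank comparison with $L_{e_1}$ rather than through any property of the contraction at $e_1$ alone — is the missing and essential step.
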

\begin{proof}\begin{enumerate}
\item Suppose that $k\not\in\{j_1,\dots,j_m\}$ and $j\not\in\{i_1,\dots,i_m\}$. Let us consider the element $v_{\alpha}=e_1+\alpha e_i$ for $\alpha\in \kk$. Note that $L_{v_{\alpha}}(e_{i_r})=e_{j_r}+\alpha\mu(e_i,e_{i_r})$ and $L_{v_{\alpha}}(e_j)=\alpha\mu(e_i,e_j)$. Hence, the matrix of $L_{v_{\alpha}}$ in the basis $e_1,\dots,e_n$ contains the $(m+1)\times (m+1)$ minor
$$
\begin{vmatrix}
1+\alpha\mu_{i,i_1}^{j_1}&\alpha\mu_{i,i_2}^{j_1}&\cdots&\alpha\mu_{i,i_m}^{j_1}&\alpha\mu_{i,j}^{j_1}\\
\alpha\mu_{i,i_1}^{j_2}&1+\alpha\mu_{i,i_2}^{j_2}&\cdots&\alpha\mu_{i,i_m}^{j_2}&\alpha\mu_{i,j}^{j_2}\\
\vdots&\vdots&\ddots&\vdots&\vdots\\
\alpha\mu_{i,i_1}^{j_m}&\alpha\mu_{i,i_2}^{j_m}&\cdots&1+\alpha\mu_{i,i_m}^{j_m}&\alpha\mu_{i,j}^{j_m}\\
\alpha\mu_{i,i_1}^{k}&\alpha\mu_{i,i_2}^{k}&\cdots&\alpha\mu_{i,i_k}^{k}&\alpha\mu_{i,j}^{k}\\
\end{vmatrix}
$$
which is a polynomial in $\alpha$ with coefficient of the term $\alpha$ equal to $\alpha\mu_{i,j}^{k}$. This means that this polynomial is not constantly zero, and hence, for some $\alpha\in\kk$, the rank of $L_{v_{\alpha}}$ is not less than $m+1$ that contradicts $IW_1^{max}(A)\cong T^{\overbrace{\scriptstyle 2,\dots,2}^{m}}$.	

\item By our assumptions, we have $\left(L_{e_1+e_i}\right)^2=\left(L_{e_1}\right)^2=\left(L_{e_i}\right)^2=0$, and hence $L_{e_1}L_{e_i}+L_{e_i}L_{e_1}=0$. On the other hand,
$$(L_{e_1}L_{e_i}+L_{e_i}L_{e_1})(e_{i_s})=\mu\big(e_1,\mu(e_i,e_{i_s})\big)+\mu(e_i,e_{j_s}).$$
Calculating the coefficient of $e_{j_r}$ in the obtained expression, one gets the required equality.
\end{enumerate}
\end{proof}

\begin{coro}\label{2k2sf} If $IW_1^{max}(A)\cong T^{\overbrace{\scriptstyle 2,\dots,2}^{m}}$, then $A$ can be represented by a structure $\mu$ such that $\mu(e_1,e_{i+1})=e_{i+n-m}$ for $1\le i\le m$,  $\mu(e_1,e_i)=0$ for $m+2\le i\le n$ and $\mu_{i,j}^k=0$ if $k\le max(i,j)$.
\end{coro}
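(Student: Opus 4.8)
The plan is to start from the representative $\mu$ constructed in the paragraph preceding Lemma~\ref{2k2} --- so $\mu(e_1,e_{i_r})=e_{j_r}$, $\mu(e_1,e_i)=0$ for $i\notin I:=\{i_1,\dots,i_m\}$, with $2\le i_1<\dots<i_m\le n$, and $\mu_{i,j}^k=0$ whenever $k\le\max(i,j)$ --- and to produce a permutation $\tau$ of $\{1,\dots,n\}$ with $\tau(1)=1$ so that reordering the basis by $\tau$ (i) preserves $\mu_{i,j}^k=0$ for $k\le\max(i,j)$, (ii) carries $I$ onto $\{2,\dots,m+1\}$ and $J:=\{j_1,\dots,j_m\}$ onto $\{n-m+1,\dots,n\}$, and (iii) turns each pair $i_r\leftrightarrow j_r$ into a pair $(i+1)\leftrightarrow(i+n-m)$. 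Since permuting $e_2,\dots,e_n$ alters neither $A$ nor $e_1$, the three asserted properties then hold for the new structure. Note first that $n\ge 2m+1$, as $i_1,\dots,i_m,j_1,\dots,j_m$ are $2m$ distinct elements of $\{2,\dots,n\}$; hence $\{1,\dots,n\}=\{1\}\sqcup\{2,\dots,m+1\}\sqcup\{m+2,\dots,n-m\}\sqcup\{n-m+1,\dots,n\}$, the middle block having $|K|$ elements, where $K:=\{2,\dots,n\}\setminus(I\cup J)$.

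Condition (i) amounts to $\tau$ being a linear extension of the partial order $P$ on $\{1,\dots,n\}$ generated by the relations $k\succ i$, $k\succ j$ for every nonzero $\mu_{i,j}^k$ (indeed a partial order, since each such triple satisfies $k>\max(i,j)$). So the first step is to read off the shape of $P$ from Lemma~\ref{2k2}(1): an output index outside $J$ forces both inputs into $I$, and $1\prec_P k$ only if $k\in J$ (because $\mu(e_1,-)$ has image in $\langle e_{j_1},\dots,e_{j_m}\rangle$). Consequently $I$ is a down-set and $J$ an up-set of $P$; moreover $\mu_{j_a,j_c}^{z}=0$ for all $a,c,z$, since if $\mu_{j_a,j_c}^{z}\ne0$ then Lemma~\ref{2k2}(1) forces $z=j_b\in J$, and then Lemma~\ref{2k2}(2) yields $\mu_{j_a,i_c}^{i_b}\ne0$, which Lemma~\ref{2k2}(1) again forbids. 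In short, every structural triple with output in $I\cup K$ has both inputs in $I$, and every one with output in $J$ has at most one input in $J$.

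Granting this, place $I$ on $\{2,\dots,m+1\}$, $K$ on $\{m+2,\dots,n-m\}$, and $J$ on $\{n-m+1,\dots,n\}$. Then a triple with output in $K$ is automatically respected (its output index is $\ge m+2$, its inputs lie in $I$, hence are $\le m+1$); a triple with output in $J$ is automatically respected unless it also has an input in $J$; the order chosen on $K$ is irrelevant; and the surviving constraints are merely that $\tau$ restrict to linear extensions of $P|_I$ and of $P|_J$, plus the triple $(1,i_r,j_r)$, which is harmless since $n-m>1$. So the whole problem reduces to choosing one ordering $a_1,\dots,a_m$ of $\{1,\dots,m\}$ and setting $\tau(i_{a_t})=t+1$, $\tau(j_{a_t})=t+n-m$ in such a way that $a_1,\dots,a_m$ is simultaneously a linear extension of $P|_I$ pulled back along $r\mapsto i_r$ and of $P|_J$ pulled back along $r\mapsto j_r$. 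This compatibility is the step I expect to be the main obstacle, and it is exactly what part~(2) of Lemma~\ref{2k2} provides: from a generating relation $\mu_{i_a,i_c}^{i_b}\ne0$ of $P|_I$, Lemma~\ref{2k2}(2) applied with $(i,s,r)=(i_c,a,b)$ and with $(i,s,r)=(i_a,c,b)$, together with anticommutativity, gives $\mu_{i_c,j_a}^{j_b}\ne0$ and $\mu_{i_a,j_c}^{j_b}\ne0$, i.e.\ $j_a\prec_P j_b$ and $j_c\prec_P j_b$. Hence the pulled-back $P|_I$ is contained in the pulled-back $P|_J$, so any linear extension of the latter also extends the former; fixing such an ordering, defining $\tau$ accordingly, and extending $\tau$ over $K$ arbitrarily, finishes the construction.

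What then remains is routine bookkeeping: checking that this $\tau$ violates no structural triple whatsoever, by a case analysis according to whether the output index is moved into $\{2,\dots,m+1\}$, $\{m+2,\dots,n-m\}$, or $\{n-m+1,\dots,n\}$ --- in each case Lemma~\ref{2k2}(1) limits the possible inputs so that the inequality $\tau$ must satisfy either follows from the choice of $a_1,\dots,a_m$ or is automatic --- together with the elementary inequalities $m+1<n-m+1$ and $n-m>1$ used above. Once $\tau$ is confirmed to be a linear extension of $P$, reordering the basis by $\tau$ yields a structure $\mu$ with $\mu(e_1,e_{i+1})=e_{i+n-m}$ for $1\le i\le m$, $\mu(e_1,e_i)=0$ for $m+2\le i\le n$, and $\mu_{i,j}^k=0$ for $k\le\max(i,j)$.
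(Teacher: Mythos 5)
Your proof is correct and follows the same route as the paper: reorder the basis into the blocks $\{e_1\},\{e_{i_r}\},\{e_{k_s}\},\{e_{j_r}\}$ and check via Lemma \ref{2k2} that the condition $\mu_{i,j}^k=0$ for $k\le\max(i,j)$ survives the reordering. The paper's two-line proof simply takes the increasing order on the $i_r$ and the correspondingly paired order on the $j_r$ --- which is one admissible linear extension of your poset $P$, since by Lemma \ref{2k2}(2) every relation $j_a\prec_P j_b$ forces $\mu_{i,i_a}^{i_b}\neq 0$ and hence $i_a<i_b$ --- and leaves implicit the verification that you spell out in full.
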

\begin{proof} It is enough to take the structure $\mu$ described above and consider it in the basis $e_1,e_{i_1},\dots,e_{i_m},e_{k_1},\dots,e_{k_{n-2m-1}},e_{j_1},\dots,e_{j_m}$, where $k_1,\dots,k_{n-2m-1}$ are all elements of $\{2,\dots,n\}\setminus\{i_1,\dots,i_m,j_1,\dots,j_m\}$ in the increasing order. Lemma \ref{2k2} guarantees that the new structure $\tilde\mu$ still satisfies the condition $\tilde\mu_{i,j}^k=0$ for $k\le max(i,j)$.
\end{proof}

Let us now return to the case $IW_1^{max}(A)\cong T^{2,2}$. Let us introduce the algebra
\begin{center}
\begin{tabular}{|l|l|l|}
\hline
{\rm notation}&{\rm multiplication table}&{\rm dimension}\\
\hline
$\begin{array}{l}T^{2,2}({\scriptstyle\epsilon_{23}^{n-2}})\end{array}$&$\begin{array}{l}e_1e_2=e_{n-1},\,\,e_1e_3=e_n,\,\,e_2e_3=e_{n-2}\end{array}$&$\begin{array}{l}n\ge 6\end{array}$\\
\hline
\end{tabular}
\end{center}

Let $U$ be an $(n-2)$-dimensional vector space and $\phi:U\times U\rightarrow \kk^2$ be a skew-symmetric bilinear map. We define a binary product on the space $U\oplus\kk^2$ by the equality $(u_1,v_1)(u_2,v_2)=\big(0,\phi(u_1,u_2)\big)$ and denote the resulting algebra by $U\ltimes_{\phi}\kk^2$.

\begin{coro}\label{T22class} One has $IW_1^{max}(A)\cong T^{2,2}$ if and only if $A$ either can be represented by $T^{2,2}({\scriptstyle\epsilon_{23}^{n-2}})$ or is isomorphic to $U\ltimes_{\phi}\kk^2$ for some $(n-2)$-dimensional vector space $U$ and some surjective skew-symmetric bilinear map $\phi:U\times U\rightarrow \kk^2$.
\end{coro}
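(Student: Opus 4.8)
The plan is to start from the normal form given by Corollary \ref{2k2sf}: for $m=2$ we may represent $A$ by a structure $\mu$ with $\mu(e_1,e_2)=e_{n-1}$, $\mu(e_1,e_3)=e_n$, $\mu(e_1,e_i)=0$ for $4\le i\le n$, and $\mu_{i,j}^k=0$ whenever $k\le\max(i,j)$. By part (1) of Lemma \ref{2k2}, for $2\le i,j\le n$ the constant $\mu_{i,j}^k$ can be nonzero only if $k\in\{n-1,n\}$ or $\{i,j\}=\{2,3\}$; since $\mu_{i,j}^k=0$ for $k\le\max(i,j)$, the only products among $e_2,\dots,e_n$ that can possibly be nonzero are $e_ie_j\in\langle e_{n-1},e_n\rangle$ for suitable $i<j$, together with $e_2e_3$, which (again using $k>\max(i,j)=3$) lies in $\langle e_4,\dots,e_n\rangle$. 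So $\mu$ restricted to $V_2=\langle e_2,\dots,e_n\rangle$ is a skew-symmetric map $V_2\times V_2\to\langle e_4,\dots,e_n\rangle$ whose image, after projecting away the $e_{n-1},e_n$-coordinates supplied by $L_{e_1}$, is controlled by the single vector $e_2e_3$.

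Next I would split into two cases according to whether $e_2e_3$ has a nonzero component outside $\langle e_{n-1},e_n\rangle$.

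\textbf{Case 1: $e_2e_3\in\langle e_{n-1},e_n\rangle$ after adjusting.} Then every product $e_ie_j$ ($2\le i<j\le n$) lies in the two-dimensional space $W=\langle e_{n-1},e_n\rangle$, and $W\subset\Ann(\mu|_{V_2})$ in the sense that $W$ multiplies trivially with everything; moreover $e_1$ acts as the fixed nonzero skew form $e_1e_2=e_{n-1},e_1e_3=e_n$ on $V_2$. Writing $U=V/\langle e_{n-1},e_n\rangle$-lift $=\langle e_1,\dots,e_{n-2}\rangle$ and letting $\phi:U\times U\to\kk^2=W$ be the induced skew form, one checks the whole multiplication is $(u_1,w_1)(u_2,w_2)=(0,\phi(u_1,u_2))$, i.e. $A\cong U\ltimes_\phi\kk^2$; surjectivity of $\phi$ holds because $\phi(e_1,e_2)=e_{n-1}$ and $\phi(e_1,e_3)=e_n$ are linearly independent. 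Conversely, for any surjective skew $\phi$ one picks a basis of $U$ putting $\phi$ in a normal form with $\phi(e_1,e_2),\phi(e_1,e_3)$ spanning $\kk^2$, which exhibits $T^{2,2}$ as $IW_{e_1}$ and shows (via the rank criterion of \cite{kpv19}) that it is the maximal one; this direction also needs the easy remark that every $IW_v(U\ltimes_\phi\kk^2)$ is nilpotent with all $L_v^2=0$, so $IW_1^{max}\cong T^{2,2}$.

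\textbf{Case 2: $e_2e_3$ has a nonzero component outside $\langle e_{n-1},e_n\rangle$.} Using a lower-triangular change of basis fixing $e_1$ (which preserves all the normalizations, as noted before the section), one can move $e_2e_3$ to a single basis vector $e_{n-2}$; one must check that the remaining freedom, together with part (2) of Lemma \ref{2k2} (the identities $\mu_{i,j_s}^{j_r}+\mu_{i,i_s}^{i_r}=0$, here relating $\mu_{i,2}^{n-1},\mu_{i,3}^{n-1},\mu_{i,2}^{n},\mu_{i,3}^{n}$), forces all other products to vanish, yielding exactly the table of $T^{2,2}({\scriptstyle\epsilon_{23}^{n-2}})$. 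One also checks $IW_{e_1}$ of this algebra is $T^{2,2}$ and, since the algebra is $3$-Engel and every $L_v$ still has rank $\le 2$ with $L_v^2=0$ (a short verification), that it is indeed the maximal IW contraction.

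The main obstacle I expect is Case 2: pinning down, via the quadratic constraints of Lemma \ref{2k2}(2) and the available lower-triangular base changes, that once $e_2e_3\notin\langle e_{n-1},e_n\rangle$ the structure is rigid — no extra parameters, no extra nonzero products, and in particular that one cannot also have a contribution of $e_2e_3$ inside $\langle e_{n-1},e_n\rangle$ that fails to be absorbable. The reverse implications (that the two listed families really do have $IW_1^{max}\cong T^{2,2}$) are comparatively routine: each is a specific small algebra where one computes $L_v$ for a general $v=\sum\alpha_ie_i$, observes $\operatorname{rank}L_v\le 2$ and $L_v^2=0$ always, and that $v=e_1$ attains rank $2$, so by the classification of one-dimensional IW contractions in \cite{kpv19} the maximal one is $T^{2,2}$.
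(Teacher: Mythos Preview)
Your overall architecture matches the paper's: start from the normal form of Corollary~\ref{2k2sf}, use Lemma~\ref{2k2}(1) to see that all products $e_ie_j$ with $2\le i<j\le n$ land in $\langle e_{n-1},e_n\rangle$ except possibly $e_2e_3$, and then split on whether $e_2e_3$ has a component outside $\langle e_{n-1},e_n\rangle$. Case~1 is fine (the annihilator property $\mu(V,e_{n-1})=\mu(V,e_n)=0$ does follow from Lemma~\ref{2k2}(2), though you should state this explicitly there rather than only in Case~2). For the reverse implication the paper gives a cleaner argument than your direct $L_v$ computation: for $U\ltimes_\phi\kk^2$ one has $A^3=0$ and $\dim A^2=2$, so $IW_1^{\max}(A)$ is ${\bf n}_3$ or $T^{2,2}$, and the former forces $A\cong\eta_m$ with $\dim A^2=1$, contradiction.

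The genuine gap is in Case~2: Lemma~\ref{2k2}(2) is \emph{not} the tool that delivers rigidity. With $i_1=2,i_2=3,j_1=n-1,j_2=n$, the four relations $\mu_{i,j_s}^{j_r}+\mu_{i,i_s}^{i_r}=0$ reduce (using $\mu_{i,j}^k=0$ for $k\le\max(i,j)$) to the single nontrivial fact $\mu_{i,n-1}^n=0$; this is exactly the annihilator statement already used in Case~1 and gives no further constraints on $\mu(e_2,e_i)$, $\mu(e_3,e_i)$, or $\mu(e_i,e_j)$ for $4\le i,j\le n-2$. What the paper actually uses is the full hypothesis $\operatorname{rank}L_v\le 2$ applied to several specific $v$: after arranging $\mu(e_2,e_3)=e_{n-2}$, the rank bound on $L_{e_2}$ (already hitting $e_{n-2}$ and $e_{n-1}$) forces $\mu(e_2,e_i)\in\langle e_{n-1}\rangle$; similarly $\mu(e_3,e_i)\in\langle e_n\rangle$; then $\operatorname{rank}L_{e_2+e_3}\le 2$ forces $\mu_{2,i}^{n-1}=\mu_{3,i}^n$, which a shift $e_i\mapsto e_i+\mu_{2,i}^{n-1}e_1$ kills; finally $\operatorname{rank}L_{e_2+e_i}\le 2$ and $\operatorname{rank}L_{e_3+e_i}\le 2$ squeeze $\mu(e_i,e_j)$ into $\langle e_{n-1}\rangle\cap\langle e_n\rangle=0$. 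So your instinct that Case~2 is the crux is right, but replace ``Lemma~\ref{2k2}(2)'' by ``the rank-$\le 2$ constraint on $L_{e_2},L_{e_3},L_{e_2+e_3},L_{e_2+e_i},L_{e_3+e_i}$'' as the engine.
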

\begin{proof} It is easy to check that $IW_1^{max}\big(T^{2,2}({\scriptstyle\epsilon_{23}^{n-2}})\big)=T^{2,2}$. If $A\cong U\ltimes_{\phi}\kk^2$, then clearly $A^3=0$, $\dim A^2=2$, and hence $IW_1^{max}(A)$ can be represented either by ${\bf n}_3$ or by $T^{2,2}$. But in the first case, one has $A\cong \eta_m$ for some integer $m$ and, in particular, $\dim A^2=1$. The obtained contradiction shows that $IW_1^{max}(A)\cong T^{2,2}$.

Suppose now that $IW_1^{max}(A)\cong T^{2,2}$. Let us represent $A$ by a structure $\mu$ satisfying conditions of Corollary \ref{2k2sf}. Lemma \ref{2k2} implies that $\mu_{i,j}^k=0$ if $k<n-1$, $2\le i,j\le n$ and $\{i,j\}\not=\{2,3\}$. By the same lemma, we have also $\mu(V,e_{n-1})=\mu(V,e_n)=0$. Hence, if $\mu_{23}^k=0$ for $k<n-1$, then $A\cong U\ltimes_{\phi}\kk^2$ for some $(n-2)$-dimensional vector space $U$ and some surjective skew-symmetric bilinear map $\phi:U\times U\rightarrow \kk^2$. Suppose now that $\mu_{23}^k\not=0$ for some $k<n-1$. Changing the basis, we may assume that $\mu(e_2,e_3)=e_{n-2}$. Since $(L_{e_2})^2=(L_{e_3})^2=0$, we have $\mu(e_2,e_{n-2})=\mu(e_3,e_{n-2})=0$.
Since $\dim \Im L_{e_2}\le 2$, $\mu(e_2,e_1)=-e_{n-1}$ and $\mu(e_2,e_3)=e_{n-2}$, we have $\Im L_{e_2}=\langle e_{n-2},e_{n-1}\rangle$. Analogously, $\Im L_{e_3}=\langle e_{n-2},e_{n}\rangle$.
Then we have $\mu(e_2,e_i)=\mu_{2,i}^{n-1}e_{n-1}$ and $\mu(e_3,e_i)=\mu_{3,i}^{n}e_{n}$ for $4\le i\le n-3$. Since $L_{e_2+e_3}(e_1)=e_{n-1}+e_n$, $L_{e_2+e_3}(e_3)=e_{n-2}$, $L_{e_2+e_3}(e_i)=\mu_{2,i}^{n-1}e_{n-1}+\mu_{3,i}^{n}e_n$ and $\dim \Im L_{e_2+e_3}\le 2$, one has $\mu_{2,i}^{n-1}=\mu_{3,i}^{n}$ for all $4\le i\le n-3$.

Replacing $e_i$ by $e_i+\mu_{2,i}^{n-1}e_1=e_i+\mu_{3,i}^{n}e_1$ for $4\le i\le n-3$, we may assume that $\mu(e_2,e_i)=\mu(e_3,e_i)=0$ for $4\le i\le n$.
Let us pick some $4\le i\le n-2$. Since $L_{e_2+e_i}(e_1)=-e_{n-1}$, $L_{e_2+e_i}(e_3)=e_{n-2}$ and $\dim \Im L_{e_2+e_i}\le 2$, one has $\mu(e_i,e_j)\subset \langle e_{n-1}\rangle$ for all $4\le j\le n-2$. Considering $L_{e_3+e_i}$, we get also $\mu(e_i,e_j)\subset \langle e_{n}\rangle$. Thus, $\mu(e_i,e_j)=0$ for $4\le i,j\le n-2$, and hence all nonzero products of basic elements are $\mu(e_1,e_2)=e_{n-1}$, $\mu(e_1,e_3)=e_{n}$ and $\mu(e_2,e_3)=e_{n-2}$, i.e. $\mu=T^{2,2}({\scriptstyle\epsilon_{23}^{n-2}})$.
\end{proof}

The classification of algebras of the form $U\ltimes_{\phi}\kk^2$ is strongly related to the classification of skew-symmetric matrix pairs considered, for example,  in  \cite{BGSS, Gant, S}. In fact, one has to factorize the classification obtained in these papers by an action of the group $GL(\kk^2)$. In terms of the algebra $U\ltimes_{\phi}\kk^2$, this action is defined by the equality $g*\left(U\ltimes_{\phi}\kk^2\right)=U\ltimes_{g\phi}\kk^2$ for $g\in GL(\kk^2)$. All the mentioned works consider the case $\charr\kk\not=2$ while the more complicated characteristic two case is considered in \cite{Wat} in a little more general settings than here. The deformation theory of skew-symmetric matrix pairs was considered in \cite{DK}. In our settings this problem differs a little but it still seems to be possible to give the general criteria of degenerations between algebras of the form $U\ltimes_{\phi}\kk^2$. In the current paper we will not solve this general problem and restrict us to the classifications of such algebras having level not greater than five. To do this we introduce the list of algebras below.

\begin{center}
Table 3. {\it Algebras of skew-symmetric matrix pairs.}\\
\begin{tabular}{|l|l|l|}
\hline
{\rm notation}&{\rm multiplication table}&{\rm dimension}\\
\hline
$\begin{array}{l}T^{2,2}({\scriptstyle\epsilon_{24}^n})\end{array}$&$\begin{array}{l}e_1e_2=e_{n-1},\,\,e_1e_3=e_2e_4=e_n\end{array}$&$\begin{array}{l}n\ge 6\end{array}$\\
\hline
$\begin{array}{l}T^{2,2}({\scriptstyle\epsilon_{34}^{n}})\end{array}$&$\begin{array}{l}e_1e_2=e_{n-1},\,\,e_1e_3=e_3e_4=e_n\end{array}$&$\begin{array}{l}n\ge 6\end{array}$\\
\hline
$\begin{array}{l}T^{2,2}({\scriptstyle\epsilon_{45}^n})\end{array}$&$\begin{array}{l}e_1e_2=e_{n-1},\,\,e_1e_3=e_4e_5=e_n\end{array}$&$\begin{array}{l}n\ge 7\end{array}$\\
\hline
\end{tabular}
\end{center}

Note that $T^{2,2}({\scriptstyle\epsilon_{34}^{n}})\cong {\bf n}_3\oplus {\bf n}_3$. To show this one has to simply replace $e_1$ by $e_1+e_4$. It was stated in \cite{gorb93} that this algebra has level three. We will show that in fact it has level four while this result is not new, see, for example, \cite{S90}.

\begin{lemma}\label{T22deg} One has
$
T^{2,2}({\scriptstyle\epsilon_{45}^n})\xrightarrow{\not\cong} T^{2,2}({\scriptstyle\epsilon_{34}^{n}})\xrightarrow{\not\cong} T^{2,2}({\scriptstyle\epsilon_{24}^n})\xrightarrow{\not\cong} T^{2,2}$. In particular,  $lev\big(T^{2,2}({\scriptstyle\epsilon_{45}^n})\big)\ge 5$.
\end{lemma}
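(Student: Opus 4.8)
The plan is to establish the chain of degenerations by exhibiting explicit parameterized bases for each arrow and then to verify that each degeneration is non-trivial by an easy invariant, after which the level bound follows from the definition of level together with the already-established degeneration $T^{2,2}\xrightarrow{\not\cong}\dots$ chain underlying $lev(T^{2,2})\ge 2$. In fact, since Lemma~\ref{T22deg} is later used to conclude $lev\big(T^{2,2}({\scriptstyle\epsilon_{45}^n})\big)\ge 5$, it suffices to produce a chain of four non-trivial degenerations ending at $\kk^n$: namely $T^{2,2}({\scriptstyle\epsilon_{45}^n})\to T^{2,2}({\scriptstyle\epsilon_{34}^n})\to T^{2,2}({\scriptstyle\epsilon_{24}^n})\to T^{2,2}\to \kk^n$ would already give level $\ge 4$, but we want $\ge 5$, so we chain the three new degenerations on top of the known $lev(T^{2,2})\ge 2$, i.e. $5 = 3+2$ where $3$ is the length of the new chain from $T^{2,2}({\scriptstyle\epsilon_{45}^n})$ to $T^{2,2}$ and $2 = lev(T^{2,2})$.

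For the degenerations themselves I would proceed one arrow at a time, from right to left as the easiest ones first. For $T^{2,2}({\scriptstyle\epsilon_{24}^n})\xrightarrow{\not\cong} T^{2,2}$: starting from the multiplication $e_1e_2=e_{n-1}$, $e_1e_3=e_2e_4=e_n$, rescale the offending generator, e.g. take the parameterized basis $E_4^t = t e_4$ (all other $E_i^t=e_i$); then $\mu(e_2,E_4^t)=t e_n\to 0$ as $t\to0$ while $e_1e_2$ and $e_1e_3$ are unaffected, yielding $T^{2,2}$. For $T^{2,2}({\scriptstyle\epsilon_{34}^n})\xrightarrow{\not\cong} T^{2,2}({\scriptstyle\epsilon_{24}^n})$: here we must turn $e_3e_4=e_n$ into $e_2e_4=e_n$, so I would use a basis change mixing $e_2$ and $e_3$ together with a rescaling — something like $E_3^t = e_3 + e_2$, $E_?^t$ rescaled appropriately so that in the limit the product $E_3^tE_4^t$ picks up only the $e_2e_4$ part; one has to check that the product $e_1e_3$ which equals $e_n$ in $T^{2,2}({\scriptstyle\epsilon_{34}^n})$ does not survive, which forces an additional $t$-scaling on $e_3$ or on $e_4$ or a compensating shift of $e_4$ by a multiple of $e_1$. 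The precise substitution will require balancing: one wants $e_1E_3^t \to 0$ but $E_3^t E_4^t \to e_n$ and $e_1e_2\to e_{n-1}$, which is a small linear-algebra puzzle but a standard one. For the top arrow $T^{2,2}({\scriptstyle\epsilon_{45}^n})\xrightarrow{\not\cong} T^{2,2}({\scriptstyle\epsilon_{34}^n})$: the difference is that the second rank-one form uses the pair $(e_4,e_5)$ rather than $(e_3,e_4)$, so again a mixing-and-rescaling substitution identifying $e_5$ with (a scaled) $e_3$ in the limit, of the form $E_5^t = e_5 + (\text{something})$, will do; one checks $e_1e_5\to0$ (it is already $0$) and that the products $e_1e_2$, $e_1e_3$ are preserved.

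The non-triviality of each arrow is the routine part: $T^{2,2}({\scriptstyle\epsilon_{24}^n})$ has $\dim A^2$ or $\dim\Ann(A)$ differing from that of $T^{2,2}$ (indeed $\dim\Ann(T^{2,2})$ is larger since $e_4\in\Ann$ there but not in $T^{2,2}({\scriptstyle\epsilon_{24}^n})$), so by the standard consequence of \cite[Lemma 1]{kppv} recalled in Section~2 the algebras are non-isomorphic; similarly one distinguishes $T^{2,2}({\scriptstyle\epsilon_{34}^n})$ from $T^{2,2}({\scriptstyle\epsilon_{24}^n})$ and $T^{2,2}({\scriptstyle\epsilon_{45}^n})$ from $T^{2,2}({\scriptstyle\epsilon_{34}^n})$ by $\dim\Ann$ (or by the dimension of the largest one-dimensional IW contraction, or by a rank count of $L_a$). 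I expect the main obstacle to be finding the exact parameterized basis for the middle arrow $T^{2,2}({\scriptstyle\epsilon_{34}^n})\to T^{2,2}({\scriptstyle\epsilon_{24}^n})$, since one has to simultaneously kill the product $e_1e_3$ (which is nonzero in the source but whose target-image must come out right) and morph $e_3e_4$ into $e_2e_4$ while keeping $e_1e_2=e_{n-1}$ — this needs a carefully chosen combination of a $t\to0$ rescaling on one basis vector and an $e_1$-shift on another. Once that substitution is pinned down the rest is bookkeeping, and the final sentence $lev\big(T^{2,2}({\scriptstyle\epsilon_{45}^n})\big)\ge 5$ follows by concatenating the three new non-trivial degenerations with a length-two chain of non-trivial degenerations out of $T^{2,2}$, whose existence is guaranteed by $lev(T^{2,2})=2$.
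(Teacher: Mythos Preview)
Your overall strategy coincides with the paper's: construct each degeneration by an explicit parameterized basis and then argue non-isomorphism by an invariant. Two concrete points, however, need fixing.

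First, your description of the middle arrow $T^{2,2}({\scriptstyle\epsilon_{34}^{n}})\to T^{2,2}({\scriptstyle\epsilon_{24}^n})$ is muddled. In both algebras one has $e_1e_3=e_n$, so the goal ``$e_1E_3^t\to 0$'' is wrong; that product must be \emph{preserved}. Likewise, putting the mix on $E_3$ and aiming for ``$E_3^tE_4^t\to e_n$'' is backwards: in the target $e_3e_4=0$, so you want $E_3^tE_4^t\to 0$ and $E_2^tE_4^t\to E_n^t$. The correct move (and this is exactly what the paper does) is to put the mix on $E_2$, not $E_3$: take $E_2=e_2+e_3$, $E_3=te_3$, $E_4=te_4$, $E_{n-1}=e_{n-1}+e_n$, $E_n=te_n$, others unchanged. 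Then $E_1E_2=e_{n-1}+e_n=E_{n-1}$, $E_1E_3=te_n=E_n$, $E_2E_4=(e_2+e_3)(te_4)=te_n=E_n$, and $E_3E_4=t^2e_n=tE_n\to 0$.

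Second, and more substantively, $\dim\Ann$ does \emph{not} distinguish $T^{2,2}({\scriptstyle\epsilon_{34}^{n}})$ from $T^{2,2}({\scriptstyle\epsilon_{24}^n})$: both have annihilator $\langle e_5,\dots,e_n\rangle$ of dimension $n-4$. Your parenthetical alternatives (max one-dimensional IW contraction, rank of $L_a$) also fail to separate them, since both have $IW_1^{max}\cong T^{2,2}$ and $\max_a\mathrm{rk}\,L_a=2$. The paper instead shows the reverse non-degeneration $T^{2,2}({\scriptstyle\epsilon_{24}^n})\not\to_{(1,3,n),(3,3,n+1)}T^{2,2}({\scriptstyle\epsilon_{34}^{n}})$: the closed set $\{\lambda\mid \lambda(V,V_3)\subset V_n,\ \lambda(V_3,V_3)=0\}$ contains a representative of $T^{2,2}({\scriptstyle\epsilon_{24}^n})$ (the given one works) but none of $T^{2,2}({\scriptstyle\epsilon_{34}^{n}})\cong{\bf n}_3\oplus{\bf n}_3$, since for the latter any codimension-two abelian $V_3$ has $\lambda(V,V_3)$ two-dimensional. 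This gives non-isomorphism and hence non-triviality of the middle arrow.
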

\begin{proof} Since $IW_{e_1}\big(T^{2,2}({\scriptstyle\epsilon_{24}^n})\big)=T^{2,2}$ and $T^{2,2}({\scriptstyle\epsilon_{24}^n})\not\cong T^{2,2}$, we have $T^{2,2}({\scriptstyle\epsilon_{24}^n})\xrightarrow{\not\cong} T^{2,2}$.

Let us now construct the remaining degenerations. One has
$$
 T^{2,2}({\scriptstyle\epsilon_{45}^n})\xrightarrow{e_1,e_2,e_3+e_4,e_5,te_4,e_6,\dots,e_n} T^{2,2}({\scriptstyle\epsilon_{34}^{n}})
\xrightarrow{e_1,e_2+e_3,te_3,te_4,e_5\dots,e_{n-1}+e_n,te_n} T^{2,2}({\scriptstyle\epsilon_{24}^n}).
$$

Since
$
T^{2,2}({\scriptstyle\epsilon_{24}^n})\not\to_{(1,3,n),(3,3,n+1)}T^{2,2}({\scriptstyle\epsilon_{34}^{n}}) \not\to_{(1,5,n+1)} T^{2,2}({\scriptstyle\epsilon_{45}^n}),
$
the constructed degenerations are non-trivial.
\end{proof}

\begin{lemma}\label{T22rest} Suppose that $A\cong U\ltimes_{\phi}\kk^2$ for some $(n-2)$-dimensional vector space $U$ and some surjective skew-symmetric bilinear map $\phi:U\times U\rightarrow \kk^2$. If $A$ cannot be represented by $T^{2,2}$, $T^{2,2}({\scriptstyle\epsilon_{24}^n})$, $T^{2,2}({\scriptstyle\epsilon_{34}^{n}})$ or $T^{2,2}({\scriptstyle\epsilon_{45}^n})$, then $A\xrightarrow{\not\cong} T^{2,2}({\scriptstyle\epsilon_{45}^n})$ and, in particular, $lev(A)\ge 6$.
\end{lemma}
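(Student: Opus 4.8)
The plan is to translate the problem into the language of pencils of skew-symmetric bilinear forms and to use their Kronecker canonical form. Write $\phi=(\phi_1,\phi_2)$ for the pencil of skew-symmetric forms on the $(n-2)$-dimensional space $U$. Since $A^2$ coincides with the $\kk^2$-summand and lies in $\Ann(A)$, an algebra isomorphism $U\ltimes_{\phi}\kk^2\cong U\ltimes_{\psi}\kk^2$ is the same as an equivalence of $\phi$ and $\psi$ under $GL(U)\times GL(\kk^2)$, where $GL(\kk^2)$ acts on the parameter $[\lambda:\mu]\in\mathbb{P}^1$ of the pencil. By the classification of such pencils (see \cite{BGSS, Gant, S} for $\charr\kk\ne 2$ and \cite{Wat} for $\charr\kk=2$) one may choose bases so that $\phi$ becomes a direct sum of standard blocks: minimal-index blocks $L_{\epsilon}$ of size $2\epsilon+1$ (here $L_1$ is exactly the pencil of $T^{2,2}$), regular Jordan blocks $H_m(p)$ of size $2m$ attached to points $p\in\mathbb{P}^1$, and a radical block on which both forms vanish. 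A routine inspection of small cases identifies the four exceptional algebras with the four smallest surjective pencils, modulo radicals and $GL(\kk^2)$: the non-radical part has dimension $3$ only for $\phi=L_1$ (this is $T^{2,2}$), dimension $4$ only for $\phi=H_2(p)$ or $\phi=H_1(p)\oplus H_1(q)$ with $p\ne q$ (these are $T^{2,2}({\scriptstyle\epsilon_{24}^n})$ and $T^{2,2}({\scriptstyle\epsilon_{34}^n})$), and dimension $5$ only for $\phi=L_2$ or $\phi=L_1\oplus H_1(p)$, the latter being $T^{2,2}({\scriptstyle\epsilon_{45}^n})$ for every $p$. Hence the hypothesis of the lemma says precisely that either $\phi=L_2$ up to a radical summand, or the non-radical part of $\phi$ has dimension at least $6$; in particular $\dim\Ann(A)\le n-5=\dim\Ann\big(T^{2,2}({\scriptstyle\epsilon_{45}^n})\big)$, so no elementary invariant obstructs the desired degeneration.

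The degeneration $A\to T^{2,2}({\scriptstyle\epsilon_{45}^n})$ will be assembled from a short list of one-parameter families, each affecting only one or two canonical blocks and written down explicitly on the relevant basis vectors:
\begin{enumerate}
\item $C\oplus H_1(p)\to C$ (rescale one basis vector of the $H_1$-block by $t$);
\item $H_m(p)\to H_{m-1}(p)\oplus H_1(p)$ and, conversely, $H_1(p)\oplus H_1(q)\to H_2(r)$ for $p\ne q$ (standard matrix-pencil degenerations);
\item $H_2(p)\to L_1\oplus\kk$ — this is the degeneration $T^{2,2}({\scriptstyle\epsilon_{24}^n})\to T^{2,2}$ of Lemma \ref{T22deg}, which produces a minimal-index block out of a regular one;
\item $L_1\to {\bf n}_3\oplus\kk$ — this is $T^{2,2}\to {\bf n}_3$;
\item $L_{\epsilon}\to L_{\epsilon-1}\oplus H_1(p)$ for $\epsilon\ge 2$.
\end{enumerate}
If the canonical form of $\phi$ contains a minimal-index block, one uses (5) repeatedly to bring its index down to $1$; then, if a non-radical regular part remains, one clears it with (1) and (2) down to a single $H_1$, reaching $L_1\oplus H_1(p)=T^{2,2}({\scriptstyle\epsilon_{45}^n})$, and if no regular part remains then $A\ne T^{2,2}$ forces a second minimal-index block, which (4) turns into an $H_1$. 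If the canonical form contains no minimal-index block, then $\phi$ is a regular pencil; since it is surjective and $A$ is neither $T^{2,2}({\scriptstyle\epsilon_{24}^n})$ nor $T^{2,2}({\scriptstyle\epsilon_{34}^n})$, either some Jordan block has size at least $2$, or there are blocks at two distinct points together with enough remaining structure; in the first case (3) applied after (2) produces an $L_1$, and in the second (2) first merges two $H_1$-blocks into an $H_2$ and then (3) applies — in either case one lands in the previous case.

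The main technical point, and the place where the hypothesis ``$A$ is none of the four algebras'' is genuinely used, is to arrange this chain so that the non-radical dimension never drops to $3$ or $4$ before the last step; otherwise we would factor through $T^{2,2}$, $T^{2,2}({\scriptstyle\epsilon_{24}^n})$ or $T^{2,2}({\scriptstyle\epsilon_{34}^n})$, which are dead ends because they have strictly larger annihilator than $T^{2,2}({\scriptstyle\epsilon_{45}^n})$. Concretely, I would run an induction on the non-radical dimension $d$ (equivalently, on $\dim\Ann(A)$): the base case $\phi=L_2$ is handled by the instance $\epsilon=2$ of family (5), which is the one genuinely new explicit family to be written out (the others being either already contained in Lemma \ref{T22deg} or classical); for $d\ge 6$ one checks block by block on the canonical form that one of (1)--(5) yields a non-trivial degeneration to an algebra again of the form $U\ltimes_{\psi}\kk^2$ with surjective $\psi$ and non-radical dimension at least $5$ but smaller than $d$, so the induction closes. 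A separate verification is needed in characteristic $2$, using the canonical form of \cite{Wat}. Once $A\xrightarrow{\not\cong} T^{2,2}({\scriptstyle\epsilon_{45}^n})$ is established, concatenating with the chain $T^{2,2}({\scriptstyle\epsilon_{45}^n})\xrightarrow{\not\cong} T^{2,2}({\scriptstyle\epsilon_{34}^n})\xrightarrow{\not\cong} T^{2,2}({\scriptstyle\epsilon_{24}^n})\xrightarrow{\not\cong} T^{2,2}$ of Lemma \ref{T22deg} and using $lev(T^{2,2})=2$ gives $lev(A)\ge 6$.
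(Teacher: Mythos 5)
Your route is genuinely different from the paper's and, as a strategy, it is sound: the translation of isomorphism into $GL(U)\times GL(\kk^2)$-equivalence of the pencil $(\phi_1,\phi_2)$ is correct, your identification of the four excluded algebras with the surjective pencils of non-radical dimension $3$, $4$ and with $L_1\oplus H_1(p)$ is correct (one checks ranks of the members: $T^{2,2}({\scriptstyle\epsilon_{24}^n})$ is a single $H_2(p)$, $T^{2,2}({\scriptstyle\epsilon_{34}^{n}})\cong{\bf n}_3\oplus{\bf n}_3$ is $H_1(p)\oplus H_1(q)$ with $p\ne q$), and each block move you list is a true degeneration — in particular $H_1(p)\oplus H_1(q)\to H_2(r)$ and $H_2(p)\to L_1\oplus\kk$ are exactly the two arrows of Lemma \ref{T22deg}, and $L_2\to L_1\oplus H_1(p)$ is realized in the paper by the explicit basis $e_3,e_5,-e_1,te_2,\frac{1}{t}e_4,\dots$ in the last sub-case of its proof. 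The paper instead avoids the Kronecker machinery entirely: after one preliminary degeneration it normalizes the pencil so that all products factor through $\langle e_2,e_3\rangle$, encodes it by $2\times 2$ matrices $M_i$, and reduces to a case analysis on the number $r\le 6$ of matrices independent from $E$; this buys a self-contained, characteristic-free argument at the cost of being less structural. What your sketch buys is conceptual clarity and a uniform induction, but three things still have to be supplied before it is a proof: (i) the explicit parameterized bases for $H_m(p)\to H_{m-1}(p)\oplus H_1(p)$ and for $L_\epsilon\to L_{\epsilon-1}\oplus H_1(p)$ with $\epsilon\ge 3$ (citing \cite{DK} is delicate, since that paper works over $\mathbb{C}$ while here $\kk$ is an arbitrary algebraically closed field); (ii) the observation that all these moves can be performed blockwise and composed, keeping the total dimension $n$ and landing exactly on $L_1\oplus H_1(p)$ rather than on one of the dead ends $L_1$, $H_2(p)$, $H_1(p)\oplus H_1(q)$ — your recipe does achieve this, but the case $L_1\oplus L_1\oplus\dots\oplus L_1$ needs the extra $L_1$'s converted via your move (4) and then cleared via (1), which is slightly more than what you wrote; and (iii) the characteristic-two canonical form from \cite{Wat}, which you flag but do not treat, and which may introduce block types absent from your list. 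None of these looks fatal, but until they are written out your argument is a correct plan rather than a complete proof.
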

\begin{proof} We may assume that $A$ is represented by a structure $\mu$ such that $\mu(e_1,e_2)=e_{n-1}$, $\mu(e_1,e_3)=e_n$ and $\mu(e_1,e_i)=0$ for all $i\ge 3$. Replacing $e_2$ by $e_2-\mu_{2,3}^ne_1$ and $e_3$ by $e_3+\mu_{2,3}^{n-1}$, we may assume also that $\mu(e_2,e_3)=0$. If there exist $4\le i,j\le n-2$ such that $\mu(e_i,e_j)\not=0$, then we set
$\kappa_s(t):=\begin{cases}
1,&\mbox{ if $s=1$},\\
t,&\mbox{ if $s\in\{i,j\}$},\\
t^2,&\mbox{ otherwise.}
\end{cases}$

We have the degeneration
$
\mu\xrightarrow{\kappa_1(t)e_1,\dots,\kappa_n(t)e_n}T^{2,2}({\scriptstyle\mu_{i,j}^{n-1}\epsilon_{i,j}^{n-1}+\mu_{i,j}^{n}\epsilon_{i,j}^{n}})
$, where $T^{2,2}({\scriptstyle\mu_{i,j}^{n-1}\epsilon_{i,j}^{n-1}+\mu_{i,j}^{n}\epsilon_{i,j}^{n}})$ is the algebra with the multiplication table $e_1e_2=e_{n-1}$, $e_1e_3=e_n$, $e_ie_j=\mu_{i,j}^{n-1}e_{n-1}+\mu_{i,j}^{n}e_{n}$. It is clear that $T^{2,2}({\scriptstyle\mu_{i,j}^{n-1}\epsilon_{i,j}^{n-1}+\mu_{i,j}^{n}\epsilon_{i,j}^{n}})\cong T^{2,2}({\scriptstyle\epsilon_{45}^{n}})$.

If $\mu(e_i,e_j)=0$ for all $4\le i,j\le n-2$, then $\mu$ is determined by the matrices $M_i=\begin{pmatrix}\mu_{2,i}^{n-1}&\mu_{3,i}^{n-1}\\\mu_{2,i}^{n}&\mu_{3,i}^{n}\end{pmatrix}$ ($4\le i\le n-2$).
Note that $T^{2,2}({\scriptstyle\epsilon_{45}^{n}})$ is isomorphic to the algebra determined by matrices $M_4=\begin{pmatrix}0&0\\1&0\end{pmatrix}$, $M_5=\begin{pmatrix}0&0\\0&1\end{pmatrix}$ and $M_i=0$ for $6\le i\le n-2$. To see this, it is enough to calculate the structure constants of $T^{2,2}({\scriptstyle\epsilon_{45}^{n}})$ in the basis $-e_2-e_5,e_1,e_4,e_3,e_5,\dots,e_n$. Let $T^{2,2}({\scriptstyle\epsilon_{24}^{n}+\epsilon_{35}^{n}})\cong T^{2,2}({\scriptstyle\epsilon_{45}^{n}})$ denote the structure corresponding to the collection of the matrices defined above.

Replacing $e_i$ by $e_i+\alpha_i e_1$, we can replace our collection of matrices by the collection $M_i-\alpha_iE$ for any $\alpha_i\in\kk$ ($4\le i\le n-2$), where $E$ denotes the matrix of the identity map.
We also can apply any linear transformation to the elements $e_4,\dots,e_{n-2}$ that will induce the corresponding linear transformation of our collection of matrices.
Then we may assume that, for some $3\le r\le n-2$, $M_i=0$ for $i>r$ and the matrices $E,M_4,\dots,M_r$ are linearly independent, in particular, $r\le 6$, where the case $r=3$ corresponds to the structure $T^{2,2}$.
Replacing $e_2$ by $\alpha_{2,2}e_2+\alpha_{2,3}e_3$, $e_3$ by $\alpha_{3,2}e_2+\alpha_{3,3}e_3$, $e_{n-1}$ by $\alpha_{2,2}e_{n-1}+\alpha_{2,3}e_n$ and $e_n$ by $\alpha_{3,2}e_{n-1}+\alpha_{3,3}e_n$, where $S=\begin{pmatrix}\alpha_{2,2}&\alpha_{3,2}\\\alpha_{2,3}&\alpha_{3,3}\end{pmatrix}$ is a nonsingular matrix,  we can conjugate all matrices $M_i$ simultaneously by $S$.

If the number $r$ above equals to $6$, then the matrices $M_4$, $M_5$ and $M_6$ can be turned to any triple of matrices such that $E$, $M_4$, $M_5$ and $M_6$ are linearly independent. In particular we may assume that  $M_4=\begin{pmatrix}0&0\\1&0\end{pmatrix}$, $M_5=\begin{pmatrix}0&0\\0&1\end{pmatrix}$ and get the degeneration $\mu\xrightarrow{e_1,\dots,e_5,te_6,e_7,\dots,e_n}T^{2,2}({\scriptstyle\epsilon_{24}^{n}+\epsilon_{35}^{n}})$.

If the number $r$ above equals to $4$, then we choose some eigenvalue $\gamma$ of $M_4$ and replace $M_4$ by $M_4-\gamma E$. If after this $M_4$ has some nonzero eigenvalue, we rescale it to turn this value to $1$. Finally, we transform $M_4$ to its Jordan normal form and get either $M_4=\begin{pmatrix}0&0\\1&0\end{pmatrix}$ or $M_4=\begin{pmatrix}0&0\\0&1\end{pmatrix}$, i.e. $A$ can be represented by $T^{2,2}({\scriptstyle\epsilon_{24}^n})$ or $T^{2,2}({\scriptstyle\epsilon_{34}^{n}})$.

It remains to consider the case $r=5$. Using the transformations described above, we can turn $M_4$ either to the matrix $\begin{pmatrix}0&0\\1&0\end{pmatrix}$ or to the matrix $\begin{pmatrix}0&0\\0&1\end{pmatrix}$. Let us consider these two possibilities separately
\begin{enumerate}
\item Suppose that $M_4=\begin{pmatrix}0&0\\0&1\end{pmatrix}$.  We may assume that $M_5=\begin{pmatrix}0&\alpha\\\beta&0\end{pmatrix}$ for some $\alpha,\beta\in\kk$ not both zero. If $\beta=0$, then, replacing $M_4$ and $M_5$ by $\frac{1}{\alpha}SM_5S$ and $S(E-M_4)S$, where $S=\begin{pmatrix}0&1\\1&0\end{pmatrix}$, we again get the structure $T^{2,2}({\scriptstyle\epsilon_{24}^{n}+\epsilon_{35}^{n}})$. If $\beta\not=0$, then
$\mu\xrightarrow{e_1,e_2,te_3,\frac{t}{\beta}e_5,e_4,e_6,\dots,e_{n-1},te_n}T^{2,2}({\scriptstyle\epsilon_{24}^{n}+\epsilon_{35}^{n}})$.

\item Suppose that $M_4=\begin{pmatrix}0&0\\1&0\end{pmatrix}$. We may assume that $M_5=\begin{pmatrix}0&\alpha\\0&\beta\end{pmatrix}$ for some $\alpha,\beta\in\kk$ not both zero. If $\beta\not=0$, then we can turn $M_5$ to the form $M_5=\begin{pmatrix}0&0\\0&1\end{pmatrix}$, interchange $M_4$ and $M_5$, and return to the previous case. If $\beta=0$, then we may assume that $M_5=\begin{pmatrix}0&1\\0&0\end{pmatrix}$. Then
$\mu\xrightarrow{e_3,e_5,-e_1,te_2,\frac{1}{t}e_4,e_6,\dots,e_n}T^{2,2}({\scriptstyle\epsilon_{45}^{n}})$.
\end{enumerate}
\end{proof}

\begin{lemma}\label{T22lev} Suppose that $IW_1^{max}(A)\cong T^{2,2}$. Then
\begin{itemize}
\item $lev(A)=3$ if and only if $A$ can be represented by $T^{2,2}({\scriptstyle\epsilon_{23}^{n-2}})$ or $T^{2,2}({\scriptstyle\epsilon_{24}^n})$;
\item $lev(A)=4$ if and only if $A$ can be represented by $T^{2,2}({\scriptstyle\epsilon_{34}^{n}})$;
\item $lev(A)=5$ if and only if $A$ can be represented by $T^{2,2}({\scriptstyle\epsilon_{45}^{n}})$.
\end{itemize}
\end{lemma}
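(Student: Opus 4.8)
The plan is to first use Corollary \ref{T22class} and Lemma \ref{T22rest} to list every algebra $A$ with $IW_1^{max}(A)\cong T^{2,2}$ that can possibly have level at most $5$, then to read the lower bounds for the levels off Lemma \ref{T22deg}, and finally to prove the matching upper bounds by a bottom-up analysis of the proper degenerations of these algebras. The converse implications in the three items of the statement then follow automatically by sorting the finitely many candidates by their (now known) levels.

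For the list: Corollary \ref{T22class} says that $A$ is either representable by $T^{2,2}({\scriptstyle\epsilon_{23}^{n-2}})$ or isomorphic to some $U\ltimes_\phi\kk^2$ with $\phi$ surjective, and in the second case Lemma \ref{T22rest} either puts $A$ among $T^{2,2}$, $T^{2,2}({\scriptstyle\epsilon_{24}^n})$, $T^{2,2}({\scriptstyle\epsilon_{34}^{n}})$, $T^{2,2}({\scriptstyle\epsilon_{45}^n})$ or gives $A\xrightarrow{\not\cong}T^{2,2}({\scriptstyle\epsilon_{45}^n})$, whence $lev(A)\ge 6$. So, up to isomorphism, the only algebras $A$ with $IW_1^{max}(A)\cong T^{2,2}$ and $lev(A)\le 5$ are $T^{2,2}$ (of level $2$) and the four algebras $T^{2,2}({\scriptstyle\epsilon_{23}^{n-2}})$, $T^{2,2}({\scriptstyle\epsilon_{24}^n})$, $T^{2,2}({\scriptstyle\epsilon_{34}^{n}})$, $T^{2,2}({\scriptstyle\epsilon_{45}^n})$, and the whole statement reduces to showing that these four have levels $3,3,4,5$ respectively (for $n$ in the range where they exist; for smaller $n$ only $T^{2,2}$ occurs and the claim is vacuous, consistently with Corollary \ref{T22class}). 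For the lower bounds, $IW_1^{max}(T^{2,2}({\scriptstyle\epsilon_{23}^{n-2}}))=T^{2,2}$ gives $T^{2,2}({\scriptstyle\epsilon_{23}^{n-2}})\xrightarrow{\not\cong}T^{2,2}$, so $lev(T^{2,2}({\scriptstyle\epsilon_{23}^{n-2}}))\ge 3$, and Lemma \ref{T22deg} yields $lev(T^{2,2}({\scriptstyle\epsilon_{24}^n}))\ge 3$, $lev(T^{2,2}({\scriptstyle\epsilon_{34}^{n}}))\ge 4$, $lev(T^{2,2}({\scriptstyle\epsilon_{45}^n}))\ge 5$.

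For the upper bounds I would argue as follows. Each of the four algebras is $2$-step nilpotent, hence anticommutative and $2$-Engel, and within $\mathcal{AC}_n$ being $2$-Engel is a closed condition; so any proper degeneration $B$ of such an $A$ is again anticommutative $2$-Engel, with $\dim B^2\le\dim A^2$, $\dim\Ann(A)\le\dim\Ann(B)$, and $T^{2,2}=IW_1^{max}(A)\to IW_1^{max}(B)$, which forces $IW_1^{max}(B)$ to be $T^{2,2}$, ${\bf n}_3$ or $\kk^n$. If $IW_1^{max}(B)\cong\kk^n$ then $B\cong\kk^n$; if $IW_1^{max}(B)\cong{\bf n}_3$ then $B\cong\eta_m$ and $\dim\Ann(A)\le\dim\Ann(\eta_m)=n-2m$ bounds $m=lev(B)$; if $IW_1^{max}(B)\cong T^{2,2}$ then by the classification of the second paragraph (applied to $B$) $B$ is representable by one of $T^{2,2}$, $T^{2,2}({\scriptstyle\epsilon_{23}^{n-2}})$, $T^{2,2}({\scriptstyle\epsilon_{24}^n})$, $T^{2,2}({\scriptstyle\epsilon_{34}^{n}})$, $T^{2,2}({\scriptstyle\epsilon_{45}^n})$ or satisfies $B\xrightarrow{\not\cong}T^{2,2}({\scriptstyle\epsilon_{45}^n})$, and then I would eliminate every option whose level is not strictly smaller than that of $A$ by comparing $\dim B^2$, by comparing the annihilator dimensions $n-3,n-3,n-4,n-4,n-5$ of the five algebras, by invoking the explicit non-degenerations recorded in Lemma \ref{T22deg}, and by using that $A\to B$ together with $B\to A$ forces $A\cong B$. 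Running this in the order $T^{2,2}({\scriptstyle\epsilon_{24}^n})$, then $T^{2,2}({\scriptstyle\epsilon_{34}^{n}})$, then $T^{2,2}({\scriptstyle\epsilon_{45}^n})$ (so that the levels already proved may be used), and separately for $T^{2,2}({\scriptstyle\epsilon_{23}^{n-2}})$, I expect to obtain that a proper degeneration of $T^{2,2}({\scriptstyle\epsilon_{24}^n})$ or of $T^{2,2}({\scriptstyle\epsilon_{23}^{n-2}})$ has level at most $2$, one of $T^{2,2}({\scriptstyle\epsilon_{34}^{n}})$ has level at most $3$, and one of $T^{2,2}({\scriptstyle\epsilon_{45}^n})$ has level at most $4$, which gives exactly the required upper bounds.

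The hard part is this last step: one must be certain there is no ``surprise'' proper degeneration raising a level above the claimed value. This is precisely why the computation has to be organised bottom-up, and the two tools that make it go through are the explicit non-degenerations of Lemma \ref{T22deg} (which forbid, for instance, $T^{2,2}({\scriptstyle\epsilon_{24}^n})\to T^{2,2}({\scriptstyle\epsilon_{34}^{n}})$, and hence also $T^{2,2}({\scriptstyle\epsilon_{24}^n})\to T^{2,2}({\scriptstyle\epsilon_{45}^n})$ by transitivity with Lemma \ref{T22deg}) and the annihilator count (which forbids $T^{2,2}({\scriptstyle\epsilon_{23}^{n-2}})$, with its $(n-3)$-dimensional annihilator, from degenerating to any of $T^{2,2}({\scriptstyle\epsilon_{24}^n})$, $T^{2,2}({\scriptstyle\epsilon_{34}^{n}})$, $T^{2,2}({\scriptstyle\epsilon_{45}^n})$ or to $\eta_m$ with $m\ge 2$, and more generally caps the $m$ appearing in the ${\bf n}_3$ case).
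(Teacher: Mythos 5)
Your proposal is correct and follows essentially the same route as the paper: the candidate list from Corollary \ref{T22class} and Lemma \ref{T22rest}, lower bounds from Lemma \ref{T22deg}, and upper bounds by noting that a hypothetical higher-level degeneration $B$ must satisfy $IW_1^{max}(A)\to IW_1^{max}(B)$ and is then eliminated via the annihilator dimensions, $\dim B^2$, and the explicit non-degenerations of Lemma \ref{T22deg}. The only cosmetic difference is that the paper phrases the upper-bound step as ``$A$ would degenerate to some $B$ of level exactly $k$'' rather than bounding the level of every proper degeneration, but the eliminations used are the same.
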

\begin{proof} By Corollary \ref{T22class} either $A$ can be represented by $T^{2,2}({\scriptstyle\epsilon_{23}^{n-2}})$ or $A\cong U\ltimes_{\phi}\kk^2$ for some $(n-2)$-dimensional vector space $U$ and some surjective skew-symmetric bilinear map $\phi:U\times U\rightarrow \kk^2$. In the last mentioned case if $A$ cannot be represented by $T^{2,2}$, $T^{2,2}({\scriptstyle\epsilon_{24}^n})$, $T^{2,2}({\scriptstyle\epsilon_{34}^{n}})$ or $T^{2,2}({\scriptstyle\epsilon_{45}^n})$, then $lev(A)\ge 6$ by Corollary \ref{T22rest}. If $T^{2,2}$ represents $A$, then $lev(A)=2$.

Suppose that $A$ can be represented by $T^{2,2}({\scriptstyle\epsilon_{23}^{n-2}})$ or $T^{2,2}({\scriptstyle\epsilon_{24}^n})$.
Since $T^{2,2}({\scriptstyle\epsilon_{23}^{n-2}})$ and $T^{2,2}({\scriptstyle\epsilon_{24}^n})$ degenerate to $T^{2,2}$, we have $lev(A)\ge 3$. Note that $T^{2,2}({\scriptstyle\epsilon_{23}^{n-2}})\not\to_{(1,4,n+1)} T^{2,2}({\scriptstyle\epsilon_{24}^n})$ and $T^{2,2}({\scriptstyle\epsilon_{24}^n})\not\to_{(1,1,n-1)} T^{2,2}({\scriptstyle\epsilon_{23}^{n-2}})$. If $lev(A)>3$, then $A$ degenerates to some algebra $B$ of level three. Since $IW_1^{max}(A)\to IW_1^{max}(B)$, we have either $IW_1^{max}(B)\cong T^{2,2}$ or $IW_1^{max}(B)\cong {\bf n}_3$. In the first case $B\cong A$, because all algebras with maximal one-dimensional IW contraction $T^{2,2}$ except $T^{2,2}$, $T^{2,2}({\scriptstyle\epsilon_{23}^{n-2}})$ and $T^{2,2}({\scriptstyle\epsilon_{24}^n})$ have level not less than four by Lemmas \ref{T22deg} and \ref{T22rest}. If $IW_1^{max}(B)\cong {\bf n}_3$, then $B$ can be represented by $\eta_3$ that contradicts the assertions $T^{2,2}({\scriptstyle\epsilon_{23}^{n-2}})\not\to_{(1,6,n+1)} \eta_3$ and $T^{2,2}({\scriptstyle\epsilon_{24}^n})\not\to_{(1,6,n+1)}\eta_3$. Hence,  $T^{2,2}({\scriptstyle\epsilon_{23}^{n-2}})$ and $T^{2,2}({\scriptstyle\epsilon_{24}^n})$ have level three.

The same argument shows that $lev\big(T^{2,2}({\scriptstyle\epsilon_{34}^{n}})\big)=4$ and $lev\big(T^{2,2}({\scriptstyle\epsilon_{45}^{n}})\big)=5$.
\end{proof}

\section{Algebras with maximal IW contraction of the form $T^{2,\dots,2}$}

This section is devoted to the classification of algebras $A$ such that either $IW_1^{max}(A)\cong T^{2,2,2}$ or $IW_1^{max}(A)\cong T^{2,2,2,2}$. We start with some general observations about degenerations of algebras of the form $T^{\overbrace{\scriptstyle 2,\dots,2}^{m}}$ with $m\ge 3$. Analogously to the case $m=2,3,4$, this algebra has dimension $n\ge 2m+1$ and the multiplication table defined by the equalities $e_1e_{i+1}=e_{i+n-m}$ ($1\le i\le m$).

\begin{lemma}\label{T2k2rest1} Let $A$ be an $n$-dimensional algebra with $IW_1^{max}(A)\cong T^{\overbrace{\scriptstyle 2,\dots,2}^{m}}$ for some $m\ge 3$. If $dim\,A^2> m$, then $A$ degenerates to one of the algebras
\begin{center}
\begin{tabular}{|l|l|}
\hline
$\begin{array}{l}T^{\overbrace{\scriptstyle 2,\dots,2}^{m}}({\scriptstyle\epsilon_{23}^{n-m}})\end{array}$&$\begin{array}{l}e_1e_{i+1}=e_{i+n-m},\,\,1\le i\le m,\,\,e_2e_3=e_{n-m}\end{array}$\\
\hline
$\begin{array}{l}T^{\overbrace{\scriptstyle 2,\dots,2}^{m}}({\scriptstyle\epsilon_{23}^{m+1}-\epsilon_{2,2+n-m}^{n}+\epsilon_{3,1+n-m}^{n}})\end{array}$&$\begin{array}{l}e_1e_{i+1}=e_{i+n-m},\,\,1\le i\le m,\\
e_2e_3=e_{m+1},\,\,e_2e_{2+n-m}=-e_n,\,\,e_3e_{1+n-m}=e_n\end{array}$\\
\hline
\end{tabular}
\end{center}
Moreover, in the case $n>2m+1$, one also has $T^{\overbrace{\scriptstyle 2,\dots,2}^{m}}({\scriptstyle\epsilon_{23}^{m+1}-\epsilon_{2,2+n-m}^{n}+\epsilon_{3,1+n-m}^{n}})\to T^{\overbrace{\scriptstyle 2,\dots,2}^{m}}({\scriptstyle\epsilon_{23}^{m+2}})$.
\end{lemma}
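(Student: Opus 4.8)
The statement has three parts, and the plan is to handle them in the order in which the hypotheses become progressively more restrictive. First I would fix a structure $\mu$ representing $A$ with $IW_1^{max}(A)\cong T^{\overbrace{\scriptstyle 2,\dots,2}^{m}}$ satisfying the normal form of Corollary~\ref{2k2sf}, so $\mu(e_1,e_{i+1})=e_{i+n-m}$ for $1\le i\le m$, $\mu(e_1,e_i)=0$ for $m+2\le i\le n$, and $\mu_{i,j}^k=0$ whenever $k\le\max(i,j)$. By Lemma~\ref{2k2}(1), for $2\le i,j\le n$ a product $\mu_{i,j}^k$ can be nonzero only if $k\in\{n-m+1,\dots,n\}$ or $i,j\in\{2,\dots,m+1\}$. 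The hypothesis $\dim A^2>m$ forces at least one such "extra" product to be nonzero beyond the $m$ products $e_1e_{i+1}$; I would split according to whether the image of $A^2$ already contains a vector outside $\langle e_{n-m+1},\dots,e_n\rangle$ (which, by the above, can only come from a product $e_ie_j$ with $2\le i,j\le m+1$ landing in some $e_k$ with $m+1\ge k$, i.e. essentially $e_2e_3$ being nonzero modulo the span of the $e_1e_{i+1}$), or whether $A^2\subseteq\langle e_{n-m+1},\dots,e_n\rangle$ has dimension exactly $m+1$, meaning $A^2=\langle e_{n-m+1},\dots,e_n\rangle$ entirely.

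In the first case, after a linear change among $e_2,\dots,e_{m+1}$ (respecting the lower-triangular normal form, which is legitimate by the discussion preceding Section~5) I would arrange $\mu(e_2,e_3)=e_{n-m}+(\text{stuff in }\langle e_{n-m+1},\dots,e_n\rangle)$, then absorb the tail into a change of $e_{n-m}$ and a rescaling of $e_1$ to reach $\mu(e_2,e_3)=e_{n-m}$; a suitable parametrized basis of the form $\kappa_s(t)e_s$ with exponents $1$ on the relevant generators and higher powers elsewhere then degenerates $\mu$ to $T^{\overbrace{\scriptstyle 2,\dots,2}^{m}}({\scriptstyle\epsilon_{23}^{n-m}})$, exactly as in the proof of Lemma~\ref{T22rest}. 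In the second case, where $A^2=\langle e_{n-m+1},\dots,e_n\rangle$, using Lemma~\ref{2k2}(2) the remaining products $\mu(e_i,e_{j_s})$ are governed by the products $\mu(e_i,e_{i_s})$; I would show that one can reduce to a single nontrivial extra relation, and then the identification of $IW_1^{max}$ together with the constraint $\dim\Im L_{e_2},\dim\Im L_{e_3}\le m$ (arguing as in Corollary~\ref{T22class}) pins the extra structure down to the pair of relations $e_2e_{2+n-m}=-e_n$, $e_3e_{1+n-m}=e_n$ modulo a further degeneration, producing $T^{\overbrace{\scriptstyle 2,\dots,2}^{m}}({\scriptstyle\epsilon_{23}^{m+1}-\epsilon_{2,2+n-m}^{n}+\epsilon_{3,1+n-m}^{n}})$. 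The point is that any other configuration of extra products already degenerates into one of these two, by a direct parametrized-basis argument analogous to the case analysis on the matrices $M_i$ in Lemma~\ref{T22rest}.

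For the last claim ($n>2m+1$ forces $T^{\overbrace{\scriptstyle 2,\dots,2}^{m}}({\scriptstyle\epsilon_{23}^{m+1}-\epsilon_{2,2+n-m}^{n}+\epsilon_{3,1+n-m}^{n}})\to T^{\overbrace{\scriptstyle 2,\dots,2}^{m}}({\scriptstyle\epsilon_{23}^{m+2}})$) I would just exhibit an explicit parametrized basis: since $n>2m+1$ there is at least one "free" basis vector $e_k$ with $m+2\le k\le n-m$, and the product $e_2e_3=e_{m+1}$ together with $e_1e_{m+1}$ being zero (as $m+1$ is not among the $i_r$'s when $n>2m+1$, since then $n-m\ge m+2$) lets one rescale $e_2,e_3$ and the vectors $e_{2+n-m},e_{1+n-m}$ by a power of $t$ so that the two relations $e_2e_{2+n-m}=-e_n$, $e_3e_{1+n-m}=e_n$ die in the limit while $e_2e_3$ survives, shifting the target of $e_2e_3$ appropriately to $e_{m+2}$; this is a one-line contraction of the same flavour as the ones displayed in Lemma~\ref{T22deg}. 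The main obstacle is the second case of the dichotomy: showing that the normal-form reductions really do collapse every admissible extra structure into exactly the two listed algebras (and not a continuum of non-isomorphic ones) requires care with the simultaneous constraints coming from $(L_{e_i})^2=0$, from Lemma~\ref{2k2}(2), and from the rank bound on $L_{e_2+e_3}$ — essentially the general-$m$ analogue of the matrix-pair bookkeeping done for $m=2$ in Lemma~\ref{T22rest}, and it is here that one must be careful that nothing new appears when $m\ge 3$.
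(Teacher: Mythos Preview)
Your dichotomy is set up on the wrong axis, and this causes the argument to miss the mechanism that produces the second target algebra. The space $\langle e_{n-m+1},\dots,e_n\rangle$ has dimension exactly $m$, so your ``second case'' $A^2\subseteq\langle e_{n-m+1},\dots,e_n\rangle$ is vacuous under the hypothesis $\dim A^2>m$. Hence everything happens in your ``first case'', but there you only treat the situation where the extra product can be pushed to target $e_{n-m}$, i.e.\ to an index in the free range $\{m+2,\dots,n-m\}$. You have not explained what happens when the extra structure constant $\mu_{i,j}^k$ (with $2\le i<j\le m+1$, $j<k\le n-m$) has $k\le m+1$; when $n=2m+1$ this is the \emph{only} possibility. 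In that regime $e_k$ is one of the generators $e_2,\dots,e_{m+1}$, so any rescaling of $e_k$ forces the same rescaling of $e_{k+n-m-1}=e_1e_k$. It is precisely here that Lemma~\ref{2k2}(2) becomes essential: from $\mu_{i,j}^k\ne 0$ it \emph{forces} $\mu_{i,j+n-m-1}^{k+n-m-1}=-\mu_{i,j}^k$ and $\mu_{j,i+n-m-1}^{k+n-m-1}=\mu_{i,j}^k$, and these survive the obvious $\kappa_s(t)e_s$ contraction, yielding exactly the three extra relations $e_2e_3=e_{m+1}$, $e_2e_{2+n-m}=-e_n$, $e_3e_{1+n-m}=e_n$ in the limit. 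So the second algebra does not arise from rank bounds on $L_{e_2},L_{e_3}$ in a vacuous case as you suggest, but from this forced compatibility in the $k\le m+1$ branch. No matrix-pair style reduction is needed here; once $(i,j,k)$ is located, a single diagonal contraction with weights $1,t^2,t^2,t^4,t^3,\dots$ (with $t^4$ on the indices tied to $k$) does the job in each branch.

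Your sketch of the final degeneration also contains a factual slip: $m+1$ \emph{is} among the generator indices in the normal form of Corollary~\ref{2k2sf}, so $e_1e_{m+1}=e_n\ne 0$. The correct contraction for $n>2m+1$ uses the free slot $e_{n-m}$ (not $e_{m+2}$) via $E_{m+1}^t=\tfrac{1}{t}(e_{m+1}-e_{n-m})$, $E_n^t=\tfrac{1}{t}e_n$, all other $E_s^t=e_s$; then $E_2^tE_3^t=e_{m+1}=tE_{m+1}^t+E_{n-m}^t\to E_{n-m}^t$ while the products $e_2e_{2+n-m}$ and $e_3e_{1+n-m}$ acquire a factor $t$ through $E_n^t$ and die.
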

\begin{proof} Due to Corollary \ref{2k2sf}, $A$ can be represented by a structure $\mu$ such that $\mu(e_1,e_{i+1})=e_{i+n-m}$ for $1\le i\le m$,  $\mu(e_1,e_i)=0$ for $m+2\le i\le n$ and $\mu_{i,j}^k=0$ if $k\le max(i,j)$.

If $dim\,A^2> m$, then $\mu_{i,j}^k\not=0$ for some $1\le i<j<k\le n-m$. Due to Lemma \ref{2k2}, we have $2\le i<j\le m+1$. Let us consider two cases.
\begin{enumerate}
\item If $k> m+1$, then we set $\kappa_s(t):=\begin{cases}
1,&\mbox{ if $s=1$},\\
t^2,&\mbox{ if $s\in\{i,j,i+n-m-1,j+n-m-1\}$},\\
\mu_{i,j}^kt^4,&\mbox{ if $s=k$},\\
t^3,&\mbox{ otherwise.}
\end{cases}$

Due to Lemma \ref{2k2}, we have the degeneration
$
\mu\xrightarrow{\kappa_1(t)e_1,\dots,\kappa_n(t)e_n}T^{\overbrace{\scriptstyle 2,\dots,2}^{m}}({\scriptstyle\epsilon_{i,j}^{k}})
$, where $T^{\overbrace{\scriptstyle 2,\dots,2}^{m}}({\scriptstyle\epsilon_{i,j}^{k}})$ is the algebra with the multiplication table $e_1e_{s+1}=e_{s+n-m}$ ($1\le s\le m$), $e_ie_j=e_k$. It is clear that $T^{\overbrace{\scriptstyle 2,\dots,2}^{m}}({\scriptstyle\epsilon_{i,j}^{k}})\cong T^{\overbrace{\scriptstyle 2,\dots,2}^{m}}({\scriptstyle\epsilon_{23}^{n-m}})$.

\item If $k\le m+1$, then we set $\kappa_s(t):=\begin{cases}
1,&\mbox{ if $s=1$},\\
t^2,&\mbox{ if $s\in\{i,j,i+n-m-1,j+n-m-1\}$},\\
\mu_{i,j}^kt^4,&\mbox{ if $s\in\{k,k+n-m-1\}$},\\
t^3,&\mbox{ otherwise.}
\end{cases}$

Due to Lemma \ref{2k2}, we have the degeneration
$$
\mu\xrightarrow{\kappa_1(t)e_1,\dots,\kappa_n(t)e_n}T^{\overbrace{\scriptstyle 2,\dots,2}^{m}}({\scriptstyle\epsilon_{i,j}^{k}+\mu_{i,j}^{k+n-m-1}\epsilon_{i,j}^{k+n-m-1}-\epsilon_{i,j+n-m-1}^{k+n-m-1}+\epsilon_{j,i+n-m-1}^{k+n-m-1}}),
$$ where $T^{\overbrace{\scriptstyle 2,\dots,2}^{m}}({\scriptstyle\epsilon_{i,j}^{k}+\mu_{i,j}^{k+n-m-1}\epsilon_{i,j}^{k+n-m-1}-\epsilon_{i,j+n-m-1}^{k+n-m-1}+\epsilon_{j,i+n-m-1}^{k+n-m-1}})$ is the algebra with the multiplication table 
\begin{multline*}
e_1e_{s+1}=e_{s+n-m}\,\,(1\le s\le m),\,\,e_ie_j=e_k+\mu_{i,j}^{k+n-m-1}e_{k+n-m-1},\\e_ie_{j+n-m-1}=-e_{k+n-m-1},\,\,e_je_{i+n-m-1}=e_{k+n-m-1}.
\end{multline*} It is clear that $$T^{\overbrace{\scriptstyle 2,\dots,2}^{m}}({\scriptstyle\epsilon_{i,j}^{k}+\mu_{i,j}^{k+n-m-1}\epsilon_{i,j}^{k+n-m-1}-\epsilon_{i,j+n-m-1}^{k+n-m-1}+\epsilon_{j,i+n-m-1}^{k+n-m-1}})\cong T^{\overbrace{\scriptstyle 2,\dots,2}^{m}}({\scriptstyle\epsilon_{23}^{m+1}-\epsilon_{2,2+n-m}^{n}+\epsilon_{3,1+n-m}^{n}}).$$
\end{enumerate}

To finish the proof, it remains to note that if $n>2m+1$, then
$$
T^{\overbrace{\scriptstyle 2,\dots,2}^{m}}({\scriptstyle\epsilon_{23}^{m+1}-\epsilon_{2,2+n-m}^{n}+\epsilon_{3,1+n-m}^{n}})
\xrightarrow{ e_1,\dots,e_m,\frac{1}{t}e_{m+1}-\frac{1}{t}e_{n-m},e_{m+2},\dots,e_{n-1},\frac{1}{t}e_n}
T^{\overbrace{\scriptstyle 2,\dots,2}^{m}}({\scriptstyle\epsilon_{23}^{n-m}}). 
$$
\end{proof}

\begin{lemma}\label{T2k2rest2} Let $A$ be an $n$-dimensional algebra with $IW_1^{max}(A)\cong T^{\overbrace{\scriptstyle 2,\dots,2}^{m}}$ for some $m\ge 3$ and $dim\,A^2=m$.
If $dim\,\Ann(A)<n-m-1$, then $A$ degenerates to the algebra
\begin{center}
\begin{tabular}{|l|l|}
\hline
$\begin{array}{l}T^{\overbrace{\scriptstyle 2,\dots,2}^{m}}({\scriptstyle\epsilon_{2,m+2}^n})\end{array}$&$\begin{array}{l}e_1e_{i+1}=e_{i+n-m},\,\,1\le i\le m,\,\,e_2e_{m+2}=e_n\end{array}$\\
\hline
\end{tabular}
\end{center}
\end{lemma}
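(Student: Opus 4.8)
The plan is to proceed exactly as in the proof of Lemma~\ref{T2k2rest1}: represent $A$ by a structure $\mu$ satisfying the conditions of Corollary~\ref{2k2sf}, so that $\mu(e_1,e_{i+1})=e_{i+n-m}$ for $1\le i\le m$, $\mu(e_1,e_i)=0$ for $m+2\le i\le n$, and $\mu_{i,j}^k=0$ whenever $k\le\max(i,j)$. Since now $\dim A^2=m$, part (1) of Lemma~\ref{2k2} together with the already-established structure of $\mu$ forces $\mu_{i,j}^k=0$ for all $i,j\ge 2$ with $k<n-m$ (any nonzero such constant would make $\dim A^2>m$, contradicting our hypothesis); so the only possibly nonzero structure constants with both lower indices $\ge 2$ have $k\ge n-m$, i.e. $\mu(e_i,e_j)\in V_{n-m}$ for $2\le i,j\le n$.

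Next I would extract a nonzero product witnessing $\dim\Ann(A)<n-m-1$. The annihilator contains $V_{n-m}=\langle e_{n-m},\dots,e_n\rangle$ in part (by Lemma~\ref{2k2}(2), since $\mu(V,e_{j_r})$ is controlled by $\mu(V,e_{i_r})$, and in the normalized basis the relevant terms land above, forcing $\mu(V,V_{n-m})$ to be small); a careful count shows that if $\dim\Ann(A)<n-m-1$ then there must exist indices $2\le i\le m+1$ and $m+2\le j\le n-m$ (or possibly $i,j$ both in the range $\{2,\dots,m+1\}$, but that case falls under $\dim A^2>m$ or is excluded by Lemma~\ref{2k2}(1)) with $\mu_{i,j}^k\ne 0$ for some $k$; since $\dim A^2=m$ this forces $k\ge n-m$, and using the freedom to permute and rescale the basis elements $e_{m+2},\dots,e_{n-m}$ and $e_{n-m},\dots,e_n$ we may normalize to $\mu(e_2,e_{m+2})=e_n$. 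Then I would exhibit an explicit one-parameter contraction of the same shape as in Lemma~\ref{T2k2rest1}, namely set
$$
\kappa_s(t):=\begin{cases}
1,&\text{if }s=1,\\
t^2,&\text{if }s\in\{2,m+2,n-m+1\},\\
t^3,&\text{if }s=n,\\
t,&\text{otherwise,}
\end{cases}
$$
(the precise exponents to be tuned so that each defining product of $T^{\overbrace{\scriptstyle 2,\dots,2}^{m}}({\scriptstyle\epsilon_{2,m+2}^n})$ survives and all other products of $\mu$ die in the limit), and check that $\mu\xrightarrow{\kappa_1(t)e_1,\dots,\kappa_n(t)e_n}T^{\overbrace{\scriptstyle 2,\dots,2}^{m}}({\scriptstyle\epsilon_{2,m+2}^n})$, invoking Lemma~\ref{2k2} to guarantee that the products not appearing in $\mu$ explicitly, forced to be zero or to lie in the right subspaces, do not obstruct the limit.

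The main obstacle I anticipate is the bookkeeping in the normalization step: showing that the hypothesis $\dim\Ann(A)<n-m-1$ genuinely produces a nonzero product of the form $\mu(e_2,e_j)$ with $j\in\{m+2,\dots,n-m\}$ landing in $V_{n-m}$ (as opposed to, say, a product among the $e_{i+1}$'s that was already handled in Lemma~\ref{T2k2rest1}), and that after using the allowed lower-triangular changes of basis fixing $e_1$ one can bring it to exactly $\mu(e_2,e_{m+2})=e_n$ without reintroducing other nonzero constants. This amounts to a linear-algebra argument over the matrices $M_j$ encoding $\mu(e_\ell,e_j)$ for $2\le\ell\le m+1$, $m+2\le j\le n-m$, together with Lemma~\ref{2k2}(2) which links the $e_{i+n-m}$-components to the $e_{i+1}$-components; once the single surviving entry is isolated and normalized, the contraction itself is routine and parallel to the cases already treated. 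The other product terms, being constrained by Lemma~\ref{2k2}, will vanish at $t=0$ for weight reasons exactly as in Lemma~\ref{T2k2rest1}, so no genuinely new idea is needed beyond that case.
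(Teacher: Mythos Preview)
Your outline follows the same strategy as the paper, but you underestimate the normalization step in two specific places, and your displayed weights do not work.

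First, the weights. With your $\kappa$, the product $e_1e_{m+1}=e_n$ scales as $(1\cdot t)/t^3=t^{-2}$ and blows up. The paper uses instead
\[
\kappa_s(t)=\begin{cases}1,&s=1,\\ t^2,&s\in\{i,j,i+n-m-1\},\\ \mu_{i,j}^k\,t^4,&s\in\{k,k-n+m+1\},\\ t^3,&\text{otherwise,}\end{cases}
\]
which keeps every relation $e_1e_{s+1}=e_{s+n-m}$ at weight $1$ while sending $e_ie_j$ to $e_k$ with coefficient~$1$ and killing everything else. Note that this scheme requires the five indices $i,j,i+n-m-1,k,k-n+m+1$ to be pairwise distinct, which is exactly why a case split is needed.

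Second, the normalization. Two reductions you gloss over are genuine:
\begin{itemize}
\item The witness $\mu_{i,j}^k\ne 0$ with $j\ge m+2$ may have both $i,j\ge m+2$; Lemma~\ref{2k2}(1) does not force $i\le m+1$ in that case. The paper fixes this by replacing $e_3$ with $e_3+e_i$ if necessary, creating a nonzero $\mu_{3,j}^k$.
\item More seriously, the case $k=i+n-m-1$ cannot be removed by permutations or by your proposed ``linear-algebra over the matrices $M_j$'': the pairing $e_{s+1}\leftrightarrow e_{s+n-m}$ forced by $L_{e_1}$ makes $i$ and $k$ move together under any change preserving the normal form for $e_1$, so you cannot reach $(i,k)=(2,n)$ directly. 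The paper handles this by first normalizing $\mu_{2,s}^{n-m+1}=0$ for $m+2\le s\le n-m$ (so that $k=i+n-m-1$ forces $i\ne 2$), and then substituting $e_2\mapsto e_2+\alpha e_i$, $e_{n-m+1}\mapsto e_{n-m+1}+\alpha e_k$ for a suitable $\alpha$ to produce a nonzero $\mu_{2,j}^k$ with $k\ne n-m+1$, reducing to the previous case.
\end{itemize}
Once these two reductions are made, the contraction is indeed routine and yields $T^{\overbrace{\scriptstyle 2,\dots,2}^{m}}({\scriptstyle\epsilon_{i,j}^k})\cong T^{\overbrace{\scriptstyle 2,\dots,2}^{m}}({\scriptstyle\epsilon_{2,m+2}^n})$.
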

\begin{proof} We again represent $A$ by a structure $\mu$ such that $\mu(e_1,e_{s+1})=e_{s+n-m}$ for $1\le s\le m$,  $\mu(e_1,e_s)=0$ for $m+2\le s\le n$ and $\mu_{i,j}^k=0$ if $k\le max(i,j)$. Since $dim\,A^2=m$, we also have $\mu_{i,j}^k=0$ if either $k\le n-m$ or $max(i,j)>n-m$ by Lemma \ref{2k2}.
Replacing $e_s$ by $e_s+\mu_{2,s}^{n-m+1}e_1$, we may assume that $\mu_{2,s}^{n-m+1}=0$ for all $m+2\le s\le n-m$.
Since $dim\,\Ann(A)<n-m-1$, we have $\mu_{i,j}^k\not=0$ for some $2\le i,j,k\le n$ with $j\ge m+2$. 
By our assumptions, we have $n-m+1\le k\le n$. We also may assume that $2\le i\le m+1$. Indeed, if $i\ge m+2$, then either $\mu_{3,j}^k\not=0$ and we can take replace $i$ by $3$ or $\mu_{3,j}^k=0$ and then we can first replace $e_3$ by $e_3+e_i$ and after that $i$ by $3$.

If $k\not=i+n-m-1$, then we set $\kappa_s(t):=\begin{cases}
1,&\mbox{ if $s=1$},\\
t^2,&\mbox{ if $s\in\{i,j,i+n-m-1\}$},\\
\mu_{i,j}^kt^4,&\mbox{ if $s\in\{k,k-n+m+1\}$},\\
t^3,&\mbox{ otherwise.}
\end{cases}$

We have the degeneration
$
\mu\xrightarrow{\kappa_1(t)e_1,\dots,\kappa_n(t)e_n}T^{\overbrace{\scriptstyle 2,\dots,2}^{m}}({\scriptstyle\epsilon_{i,j}^{k}}),
$ where $T^{\overbrace{\scriptstyle 2,\dots,2}^{m}}({\scriptstyle\epsilon_{i,j}^{k}})$ is the algebra with the multiplication table 
$e_1e_{s+1}=e_{s+n-m}$ ($1\le s\le m$), $e_ie_j=e_k$. It is clear that
$T^{\overbrace{\scriptstyle 2,\dots,2}^{m}}({\scriptstyle\epsilon_{i,j}^{k}})\cong T^{\overbrace{\scriptstyle 2,\dots,2}^{m}}({\scriptstyle\epsilon_{2,m+2}^{n}}).$

If $k=i+n-m-1$, then we have $i\not=2$ and there is $\alpha\in\kk$ such that $\mu_{i,j}^{i+n-m-1}\alpha^2\not=\mu_{i,j}^k\alpha+\mu_{2,j}^{i+n-m-1}$. Then replacing $e_2$ and $e_{n-m+1}$ by $e_2+\alpha e_i$ and $e_{n-m+1}+\alpha e_{k}$, we may assume that $\mu_{2,j}^{k}\not=0$ and return to the case that we have already considered.
\end{proof}

\begin{lemma}\label{T2k2rest3} Let $A$ be an $n$-dimensional algebra with $IW_1^{max}(A)\cong T^{\overbrace{\scriptstyle 2,\dots,2}^{m}}$ for some $m\ge 3$. If $A\not\cong T^{\overbrace{\scriptstyle 2,\dots,2}^{m}}$, then $A$ degenerates to the algebra
\begin{center}
\begin{tabular}{|l|l|}
\hline
$\begin{array}{l}T^{\overbrace{\scriptstyle 2,\dots,2}^{m}}({\scriptstyle\epsilon_{23}^n})\end{array}$&$\begin{array}{l}e_1e_{i+1}=e_{i+n-m},\,\,1\le i\le m,\,\,e_2e_3=e_n\end{array}$\\
\hline
\end{tabular}
\end{center}
\end{lemma}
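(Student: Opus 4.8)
The plan is a case analysis according to the pair $(\dim A^{2},\dim\Ann(A))$, which for a structure $\mu$ as in Corollary~\ref{2k2sf} is constrained by $\dim A^{2}\ge m$ (since $\mu(e_{1},W)=\langle e_{n-m+1},\dots,e_{n}\rangle$ already has dimension $m$, where $W=\langle e_{2},\dots,e_{m+1}\rangle$) and $\dim\Ann(A)\le n-m-1$ (because $\Ann(A)$ is a proper subspace of $\ker L_{e_1}=\langle e_1,e_{m+2},\dots,e_n\rangle$). In each case I would reduce $A$, via Lemmas~\ref{T2k2rest1} and~\ref{T2k2rest2}, to a short explicit list of algebras and then degenerate every algebra on that list to $T^{2,\dots,2}(\epsilon_{23}^{n})$.

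If $\dim A^{2}>m$, Lemma~\ref{T2k2rest1} yields $A\to T^{2,\dots,2}(\epsilon_{23}^{n-m})$ or $A\to T^{2,\dots,2}(\epsilon_{23}^{m+1}-\epsilon_{2,2+n-m}^{n}+\epsilon_{3,1+n-m}^{n})$, the latter (for $n>2m+1$) degenerating further to $T^{2,\dots,2}(\epsilon_{23}^{m+2})$; so it is enough to establish $T^{2,\dots,2}(\epsilon_{23}^{k})\to T^{2,\dots,2}(\epsilon_{23}^{n})$ for $k\in\{n-m,m+2\}$. When the index $k$ in which $e_{2}e_{3}$ lands is a ``central'' one ($k\notin\{m+1\}\cup\{n-m+1,\dots,n\}$), this is immediate from the parameterized basis obtained by keeping every $e_{i}$ except replacing $e_{k}$ by $\tfrac1t(e_{k}-e_{n})$: in the limit $e_{2}e_{3}$ slides into $e_{n}=\mu(e_{1},e_{m+1})$ and no other product is affected. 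The only exceptions are the two small algebras arising when $n=2m+1$, where $e_{2}e_{3}$ is forced into $W$ or into $\mu(e_{1},W)$; for these I would use ad hoc bases that scale $e_{m+1}$ (and $e_{2m+1}$) by powers of $\tfrac1t$ and perturb $e_{3}$ by a multiple of some $\mu(e_{1},e_{j})$, so as to kill the obstructing quadratic terms and turn $e_{2}e_{3}$ into $e_{2m+1}$ at once.

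If $\dim A^{2}=m$ and $\dim\Ann(A)<n-m-1$, Lemma~\ref{T2k2rest2} gives $A\to T^{2,\dots,2}(\epsilon_{2,m+2}^{n})$, and I would close this case with $T^{2,\dots,2}(\epsilon_{2,m+2}^{n})\to T^{2,\dots,2}(\epsilon_{23}^{n})$ via the basis $(e_{1},e_{2},e_{3}+e_{m+2},e_{4},\dots,e_{m+1},te_{m+2},e_{m+3},\dots,e_{n})$, which keeps $e_{2}(e_{3}+e_{m+2})=e_{n}$ and sends $e_{2}(te_{m+2})=te_{n}$ to zero. If $\dim A^{2}=m$ and $\dim\Ann(A)=n-m-1$, then by Lemma~\ref{2k2} the only nonzero products of $\mu$ are $\mu(e_{1},e_{i+1})=e_{i+n-m}$ together with a skew family $\mu(e_{i+1},e_{j+1})\in\langle e_{n-m+1},\dots,e_{n}\rangle$, i.e.\ $A$ is encoded by a skew bilinear map $\beta\colon W\times W\to W$. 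When $\beta\ne 0$ admits $u,v$ with $\beta(u,v)\notin\langle u,v\rangle$, I would change the basis of $W$ so that $\mu(e_{2},e_{3})=e_{n}$ exactly, then apply the diagonal degeneration $E_{k}=t^{a_{k}}e_{k}$ with $a_{1}=0$, integers $a_{2}<a_{3}<\dots<a_{m}$ satisfying $a_{m}<a_{2}+a_{3}$, $a_{m+1}=a_{2}+a_{3}$ and $a_{n-m+l}=a_{l+1}$: this makes the exponents of $\mu(e_{1},e_{i+1})$ and $\mu(e_{2},e_{3})$ vanish and all remaining exponents strictly positive, so $A\to T^{2,\dots,2}(\epsilon_{23}^{n})$.

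The step I expect to be the main obstacle is the remaining ``confined'' configuration $\beta(u,v)\in\langle u,v\rangle$ for all $u,v$ (together with the $n=2m+1$ exceptions above), where every extra product lands inside the already occupied space $\mu(e_{1},W)=\langle e_{n-m+1},\dots,e_{n}\rangle$ and no diagonal scaling can separate one of them. I would deal with it structurally: normalizing $\beta(e_{2},e_{3})=e_{3}$ and testing the confinement condition on pairs such as $(e_{2},e_{3}+e_{j})$ and $(e_{2}+e_{3},e_{j})$ forces $\beta(e_{2},\cdot)=\mathrm{id}+e_{2}\otimes\ell$, and a triangular change of basis of $W$ then reduces $\beta$ to $\beta(e_{2},e_{j})=e_{j}$ for all $j\ge 3$ with all other values zero; replacing $e_{2}$ by $e_{2}-e_{1}$ shows $A\cong T^{2,\dots,2}$, which is excluded by hypothesis. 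Hence in every remaining case $A\to T^{2,\dots,2}(\epsilon_{23}^{n})$, as claimed.
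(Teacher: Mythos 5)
Your proposal is correct and follows essentially the same route as the paper: the same reductions via Lemmas \ref{T2k2rest1} and \ref{T2k2rest2} (with the identical basis $e_1,e_2,e_3+e_{m+2},e_4,\dots,e_{m+1},te_{m+2},e_{m+3},\dots,e_n$ in the annihilator case, and a sliding/weighted-diagonal degeneration matching the paper's, including a workable variant of the paper's explicit basis for the exceptional $n=2m+1$ algebra $T^{\overbrace{\scriptstyle 2,\dots,2}^{m}}({\scriptstyle\epsilon_{23}^{m+1}-\epsilon_{2,2+n-m}^{n}+\epsilon_{3,1+n-m}^{n}})$). The only organizational difference is in the residual case $\dim A^2=m$, $\dim\Ann(A)=n-m-1$: you merge the paper's first two subcases into the single basis-free condition ``$\beta(u,v)\notin\langle u,v\rangle$ for some $u,v$'' and identify the confined case with $\ell(u)v-\ell(v)u$, which is exactly the paper's case 3 yielding $A\cong T^{\overbrace{\scriptstyle 2,\dots,2}^{m}}$.
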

\begin{proof} If $dim\,A^2>m$, then $A\to T^{\overbrace{\scriptstyle 2,\dots,2}^{m}}({\scriptstyle\epsilon_{23}^{m+1}-\epsilon_{2,2+n-m}^{n}+\epsilon_{3,1+n-m}^{n}})$ or $A\to T^{2,2,2}({\scriptstyle\epsilon_{23}^{n-m}})$ by Lemma \ref{T2k2rest1}.
Since $T^{\overbrace{\scriptstyle 2,\dots,2}^{m}}({\scriptstyle\epsilon_{23}^{m+1}-\epsilon_{2,2+n-m}^{n}+\epsilon_{3,1+n-m}^{n}})\xrightarrow{e_1,\dots,e_m,\frac{1}{t}e_{m+1}-\frac{1}{t^2}e_{n},e_{m+2},\dots,e_{n-1},\frac{1}{t}e_n}T^{\overbrace{\scriptstyle 2,\dots,2}^{m}}({\scriptstyle\epsilon_{23}^n})$ and
$T^{\overbrace{\scriptstyle 2,\dots,2}^{m}}({\scriptstyle\epsilon_{23}^{n-m}})\xrightarrow{e_1,\dots,e_{n-n-1},\frac{1}{t}e_{n-m}-\frac{1}{t}e_n,e_{n-m+1},\dots,e_n}T^{2,2,2}({\scriptstyle\epsilon_{23}^n})$, we may assume that $dim\,A^2=m$.

If $dim\,\Ann(A)<n-m-1$, then $A\to T^{\overbrace{\scriptstyle 2,\dots,2}^{m}}({\scriptstyle\epsilon_{2,m+2}^n})$ by Lemma \ref{T2k2rest2}. Since 
$$T^{\overbrace{\scriptstyle 2,\dots,2}^{m}}({\scriptstyle\epsilon_{2,m+2}^n})\xrightarrow{e_1,e_2,e_3+e_{m+2},e_4,\dots,e_{m+1},te_{m+2},e_{m+3},\dots,e_n} T^{\overbrace{\scriptstyle 2,\dots,2}^{m}}({\scriptstyle\epsilon_{23}^n}),$$
 it remains to consider the case where $A$ is represented by a structure $\mu$ such that $\mu(e_1,e_{s+1})=e_{s+n-3}$ for $1\le s\le m$ and $\mu_{i,j}^k=0$ if either $max(i,j)>m+1$ or $k<n-m+1$. We will consider three cases.
\begin{enumerate}
\item There are some pairwise different $2\le i,j,k\le m+1$ such that $\mu_{i,j}^{k+n-m-1}\not=0$. Then we set $\kappa_s(t):=\begin{cases}
1,&\mbox{ if $s=1$},\\
t^2,&\mbox{ if $s\in\{i,j,i+n-m-1,j+n-m-1\}$},\\
\mu_{i,j}^{k+n-m-1}t^4,&\mbox{ if $s\in\{k,k-n+m+1\}$},\\
t^3,&\mbox{ otherwise.}
\end{cases}$

We have the degeneration
$
\mu\xrightarrow{\kappa_1(t)e_1,\dots,\kappa_n(t)e_n}T^{\overbrace{\scriptstyle 2,\dots,2}^{m}}({\scriptstyle\epsilon_{i,j}^{k+n-m-1}}),
$ where $T^{\overbrace{\scriptstyle 2,\dots,2}^{m}}({\scriptstyle\epsilon_{i,j}^{k+n-m-1}})$ is the algebra with the multiplication table 
$e_1e_{s+1}=e_{s+n-m}$ ($1\le s\le m$), $e_ie_j=e_{k+n-m-1}$. It is clear that
$T^{\overbrace{\scriptstyle 2,\dots,2}^{m}}({\scriptstyle\epsilon_{i,j}^{k+n-m-1}})\cong T^{\overbrace{\scriptstyle 2,\dots,2}^{m}}({\scriptstyle\epsilon_{23}^{n}}).$

\item For any pairwise different $2\le i,j,k\le m+1$ one has $\mu_{i,j}^{k+n-m-1}=0$, but there are pairwise different $2\le i,j,k\le m+1$ such that $\mu_{i,j}^{j+n-m-1}\not=\mu_{i,k}^{k+n-m-1}$. Then replacing $e_j$ and $e_{j+n-m+1}$ by $e_j+ e_k$ and $e_{j+n-m-1}+e_{k+n-m-1}$, we may assume that $\mu_{i,j}^{k+n-m-1}\not=0$ and return to the previous case.

\item There are $\alpha_i\in\kk$ ($2\le i\le m+1$) such that, for any $2\le i,j\le m+1$, one has $\mu(e_i,e_j)=\alpha_i e_{j+n-m-1}-\alpha_j e_{i+n-m-1}$. Replacing $e_i$ by $e_i-\alpha_i e_1$ for $2\le i\le m+1$, one sees that $\mu\cong T^{\overbrace{\scriptstyle 2,\dots,2}^{m}}$ in this case that contradicts our assumptions.
\end{enumerate}
Since the considered cases cover all possibilities, we are done.
\end{proof}

To fulfill the part of our classification announced in the beginning of this section, we will need the algebra structures presented in the next table.
\begin{center}
Table 4. {\it Algebras with $IW_1^{max}(A)=T^{2,2,2}$.}\\
\begin{tabular}{|l|l|l|}
\hline
{\rm notation}&{\rm multiplication table}&{\rm dimension}\\
\hline
$\begin{array}{l}T^{2,2,2}({\scriptstyle\epsilon_{23}^n})\end{array}$&$\begin{array}{l}e_1e_{i+1}=e_{i+n-3},\,\,1\le i\le 3,\,\,e_2e_3=e_n\end{array}$&$\begin{array}{l}n\ge 7\end{array}$\\
\hline
$\begin{array}{l}T^{2,2,2}({\scriptstyle\epsilon_{24}^n})\end{array}$&$\begin{array}{l}e_1e_{i+1}=e_{i+n-3},\,\,1\le i\le 3,\,\,e_2e_4=e_n\end{array}$&$\begin{array}{l}n\ge 7\end{array}$\\
\hline
$\begin{array}{l}T^{2,2,2}({\scriptstyle\epsilon_{23}^4-\epsilon_{26}^7+\epsilon_{35}^7})\end{array}$&$\begin{array}{l}e_1e_{i+1}=e_{i+4},\,\,1\le i\le 3,\\e_2e_3=e_4,\,\,e_2e_6=-e_7,\,\,e_3e_5=e_7\end{array}$&$\begin{array}{l}n=7\end{array}$\\
\hline
\end{tabular}
\end{center}

It will follow from Lemma \ref{T2k2rest1} and what we will prove later that $lev_{\infty}\big(T^{2,2,2}({\scriptstyle\epsilon_{23}^4-\epsilon_{26}^7+\epsilon_{35}^7})\big)\ge 7$. In contrast to this, in dimension $7$ the algebra $T^{2,2,2}({\scriptstyle\epsilon_{23}^4-\epsilon_{26}^7+\epsilon_{35}^7})$ has level five. To prove this, we need to show that it does not degenerate to $T^{2,2}({\scriptstyle\epsilon_{45}^n})$ and $T^{2,2,2}({\scriptstyle\epsilon_{24}^n})$. Unfortunately we have not found some short prove of this fact, and so give a very tedious calculation proving it in the next lemma.

\begin{lemma}\label{ex222} In the variety $\mathcal{AC}_7$ one has $T^{2,2,2}({\scriptstyle\epsilon_{23}^4-\epsilon_{26}^7+\epsilon_{35}^7})\not\to T^{2,2}({\scriptstyle\epsilon_{45}^n}),T^{2,2,2}({\scriptstyle\epsilon_{24}^n})$.
\end{lemma}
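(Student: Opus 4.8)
The statement is a non-degeneration in the $7$-dimensional anticommutative setting, so the natural tool is \cite[Lemma 1]{kppv}: to prove $C:=T^{2,2,2}({\scriptstyle\epsilon_{23}^4-\epsilon_{26}^7+\epsilon_{35}^7})\not\to D$ for $D\in\{T^{2,2}({\scriptstyle\epsilon_{45}^7}),T^{2,2,2}({\scriptstyle\epsilon_{24}^7})\}$ it suffices to exhibit, for each target $D$, a subset $\mathcal R\subset\mathcal{AC}_7$ that is closed, invariant under lower triangular basis changes, contains a structure representing $C$, and contains no structure representing $D$. The crude invariants $\dim A^2$ and $\dim\Ann(A)$ will not separate these algebras (all three have $\dim A^2=3$ and the same annihilator dimension), so I would first look for a separating invariant of the form $\mathcal R=\{\lambda\in\mathcal{AC}_7\mid \lambda(V_{i_1},V_{j_1})\subset V_{k_1},\dots\}$, i.e.\ use the $\not\to_{(i_1,j_1,k_1),\dots}$ notation; the point of the lemma is that one only has to check one representative of $C$ lies in $\mathcal R$ (after a suitable permutation of basis vectors) and that no representative of $D$ does.

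\textbf{Key steps.} First I would pin down the orbit of $C$ concretely: compute $C^2=\langle e_4,e_5,e_6,e_7\rangle$, $C^3$, the lower central-type series, and the ranks $r_m(a)=\mathrm{rank}\,(L_a)^m$ as $a$ ranges over $C$ — in particular the generic and maximal values of $\dim\Im L_a$ and $\dim(\Im L_a\cap C^2)$, and the behaviour of the induced map on $C/C^2$. These are exactly the data controlling which sets of the above form $C$ can be pushed into. Second, for the target $T^{2,2,2}({\scriptstyle\epsilon_{24}^7})$: here $IW_1^{max}$ is the same ($T^{2,2,2}$) for both $C$ and this target, so the obstruction must be finer than anything visible after IW contraction; I expect the relevant point is a rigidity statement about how $C^2$ decomposes under the left multiplications — e.g.\ in $C$ the image $\langle e_5,e_6,e_7\rangle=C\!\cdot\!\langle e_2,e_3,e_4\rangle$ relative to the ``$e_7$-part'' behaves differently from the corresponding decomposition in $T^{2,2,2}({\scriptstyle\epsilon_{24}^7})$, and one packages this as an inclusion $\lambda(V_i,V_j)\subset V_k$ after a fixed renumbering. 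Third, for $T^{2,2}({\scriptstyle\epsilon_{45}^7})$: its maximal IW contraction is $T^{2,2}$, strictly below $T^{2,2,2}$, so by the lemma that $A\to B\Rightarrow IW_1^{max}(A)\to IW_1^{max}(B)$ one would \emph{hope} $C\not\to T^{2,2}({\scriptstyle\epsilon_{45}^7})$ is immediate from $T^{2,2,2}\not\to T^{2,2}$ — but caution: $T^{2,2,2}\to T^{2,2}$ \emph{does} hold (it is a further degeneration), so this shortcut fails, and one genuinely needs a dimension-$7$-specific closed condition separating $C$ from $T^{2,2}({\scriptstyle\epsilon_{45}^7})$. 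I would hunt for it among conditions mixing the two ``blocks'': in $T^{2,2}({\scriptstyle\epsilon_{45}^7})$ the product $e_4e_5$ lands in the single line $\langle e_7\rangle$ that is also $e_1e_3$, forcing a coincidence of images that a suitable $\mathcal R$ of the form $\{\lambda\mid\lambda(V_4,V_5)+\lambda(V_1,V_{1?})\subset V_k,\ \lambda(V_?,V_?)\subset V_?\}$ can detect, while in $C$ the three ``extra'' products $e_2e_3=e_4$, $e_2e_6=-e_7$, $e_3e_5=e_7$ prevent $C$ from sitting in any orbit-closure where that coincidence is forced; concretely I expect to use $\not\to$ with two or three triples, defining $\mathcal R$ explicitly as the paper says it will ``in some more complicated situation''.

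\textbf{Main obstacle.} The genuinely hard part is finding the right closed lower-triangular-invariant set for the two targets — especially for $T^{2,2}({\scriptstyle\epsilon_{45}^7})$, where the naive IW-contraction argument is unavailable and the separating condition has to be a somewhat delicate combination of inclusions $\lambda(V_i,V_j)\subset V_k$ that simultaneously (a) is satisfied by some representative of $C$ in a fixed basis and (b) provably fails for every representative of the target. Verifying (b) requires checking that \emph{no} choice of basis (equivalently, no lower-triangular transform of the canonical structure, plus the permutations) moves the target into $\mathcal R$, which is the tedious case analysis the author warns about; organizing it by first reducing, via lower-triangular changes fixing the relevant flag, to a small normal form for the target structure's constants, then ruling out membership, is the route I would take. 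The first target, sharing $IW_1^{max}$, is easier only in that the obstruction lives ``inside $C^2$'' and one has fewer free parameters to quotient out; I would do that one first as a warm-up and reuse its bookkeeping for the $T^{2,2}({\scriptstyle\epsilon_{45}^7})$ case.
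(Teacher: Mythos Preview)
Your overall strategy --- produce a closed, lower-triangular-invariant set $\mathcal R\subset\mathcal{AC}_7$ containing a representative of $C$ but missing both targets --- is exactly what the paper does. The genuine gap is in the \emph{shape} of $\mathcal R$ you are hunting for. You commit to conditions of the form $\lambda(V_i,V_j)\subset V_k$ (the $\not\to_{(i_1,j_1,k_1),\dots}$ mechanism) and hope ``two or three triples'' suffice. They do not: the paper's $\mathcal R$ combines such flag-inclusion conditions with \emph{seven quadratic equations} in the structure constants, e.g.\ $\lambda_{12}^4\lambda_{34}^7=\lambda_{23}^6\lambda_{16}^7$, $\lambda_{12}^4\lambda_{34}^7+\lambda_{13}^5\lambda_{25}^7=0$, and a cubic-looking three-term identity $\lambda_{23}^6\lambda_{14}^7-\lambda_{13}^6\lambda_{24}^7+\lambda_{12}^6\lambda_{34}^7=0$. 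The author explicitly says no short proof was found, which strongly suggests purely linear flag conditions cannot separate $C$ from either target in dimension $7$; your plan as written would stall at exactly this point. (A small side error: $\dim C^2=4$, not $3$ --- you even compute $C^2=\langle e_4,e_5,e_6,e_7\rangle$ a few lines later --- though your conclusion that $\dim A^2$ gives no obstruction is still correct, since $4\ge 2,3$.)

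Two further differences worth noting. First, the paper uses a \emph{single} $\mathcal R$ that excludes both targets simultaneously, rather than treating them separately; once the quadratic constraints are in place, verifying $O(D)\cap\mathcal R=\varnothing$ for $D=T^{2,2}({\scriptstyle\epsilon_{45}^7})$ is the harder case and is carried out in full, while $D=T^{2,2,2}({\scriptstyle\epsilon_{24}^7})$ is left to the reader. Second, the verification for $T^{2,2}({\scriptstyle\epsilon_{45}^7})$ is not done by reducing the target to a normal form under lower-triangular changes as you propose, but rather by assuming an arbitrary basis $f_1,\dots,f_7$ realizing the target inside some $\lambda\in\mathcal R$, then using the linear conditions in $\mathcal R$ to pin down where $f_6,f_7$ and the span $\langle f_1,f_2,f_4\rangle$ must sit relative to the flag $V_1\supset\cdots\supset V_7$, and finally invoking the quadratic relations to derive a contradiction via two explicit $3\times 3$ determinant identities. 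The quadratic conditions are where all the leverage is; without them your case analysis would have no way to terminate.
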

\begin{proof} Let us consider the set
$$
\mathcal{R}=\left\{\lambda\in\mathcal{AC}_7\left| \begin{array}{c}
\lambda(V,V_7)+\lambda(V_2,V_6)+\lambda(V_3,V_5)=0,\\
\lambda(V,V_4)\subset V_7,\lambda(V_2,V_3)\subset V_6,\lambda(V,V_3)\subset V_5,\lambda(V,V)\subset V_4,\\
\lambda_{12}^4\lambda_{34}^7=\lambda_{23}^6\lambda_{16}^7,\lambda_{12}^4\lambda_{34}^7+\lambda_{13}^5\lambda_{25}^7=0,\lambda_{12}^5\lambda_{34}^7=\lambda_{13}^5\lambda_{24}^7,\\
\lambda_{12}^5\lambda_{25}^7+\lambda_{12}^4\lambda_{24}^7=0,\lambda_{23}^6\lambda_{15}^7=\lambda_{13}^6\lambda_{25}^7,\lambda_{13}^6\lambda_{16}^7+\lambda_{13}^5\lambda_{15}^7=0,\\
\lambda_{23}^6\lambda_{14}^7-\lambda_{13}^6\lambda_{24}^7+\lambda_{12}^6\lambda_{34}^7=0
\end{array}\right.\right\}.
$$
Direct verifications show that $\mathcal{R}$ is a closed subset of $\mathcal{A}_n$ invariant under lower triangular transformations. Considering the basis $e_1,e_2,e_3,e_5,e_6,e_4,e_7$, one sees that $\lambda\cap O\big(T^{2,2,2}({\scriptstyle\epsilon_{23}^4-\epsilon_{26}^7+\epsilon_{35}^7})\big)\not=\varnothing$. On the other hand, a direct calculation shows that
$$
\lambda\cap O\big(T^{2,2}({\scriptstyle\epsilon_{45}^n})\big)=\lambda\cap O\big(T^{2,2,2}({\scriptstyle\epsilon_{24}^n})\big)=\varnothing.
$$
We will fulfill this calculation in the more difficult case of the algebra $T^{2,2}({\scriptstyle\epsilon_{45}^n})$ and leave the second case to the reader.

Suppose that we have found some $\lambda\in\mathcal{R}$ and a basis $f_1,\dots,f_7$ of $V$ such that the structure constants of $\lambda$ in the basis $f_1,\dots,f_7$ are the same as the structure constants of $T^{2,2}({\scriptstyle\epsilon_{45}^n})$ in the basis $e_1,\dots,e_7$, i.e. $\lambda(f_1,f_2)=f_6$, $\lambda(f_1,f_3)=\lambda(f_4,f_5)=f_7$. Let us pick some $v=\sum\limits_{i=1}^7\alpha_if_i\in V_4$. Using the condition $\lambda(V,V_4)\subset V_7$, we see that $\alpha_2f_6+\alpha_3f_7$, $\alpha_1 f_6$, $\alpha_1f_7$, $\alpha_4 f_7$ and $\alpha_5 f_7$ belong to $\langle e_7\rangle$. If $\langle e_7\rangle\not= \langle f_7\rangle$, then we have $\alpha_1=\alpha_4=\alpha_5$ for any element of $V_4$, and hence the dimension argument implies $V_4=\langle f_2,f_3,f_6,f_7\rangle$. But in this case $\lambda(V,V_4)=\langle f_6,f_7\rangle$ and we get a construction. Thus, we may assume that $e_7=f_7$ and $\alpha_1=\alpha_2=0$ for any $v\in V_4$. Using the condition $V_4^2=0$, we see that $V_4=\langle f_3,\alpha f_4+\beta f_5, f_6,f_7\rangle$ for some $\alpha,\beta\in\kk$. Without loss of generality we may assume that $V_4=\langle f_3, f_5, f_6,f_7\rangle$ and $\langle e_1,e_2,e_3\rangle=\langle f_1,f_2,f_4\rangle$. Suppose first that $f_6\not\in V_5$. Since $\mathcal{R}$ is  invariant under lower triangular transformations, we may assume that $e_4=f_6$. Using the condition $\lambda(V,V_3)\subset V_5$, we get $e_3=f_4$. But in this case $\lambda(V_3,V_5)\not=0$ and we get a contradiction.

We get $f_6\not\in V_5$, and hence we may assume that $e_4\in \{f_3,f_5\}$. In particular, we have $\lambda_{12}^4=0$, and hence $\lambda_{12}^5\lambda_{25}^7=\lambda_{13}^5\lambda_{25}^7=0$. Now we have two cases.
\begin{enumerate}
\item $\lambda_{12}^5=\lambda_{13}^5=0$. With the conditions $\lambda(V_2,V_3)\subset V_6$ and $f_6\not\in V_5$ this implies that $\lambda(V,V)\subset V_6$, i.e. we may assume that $e_6=f_6$, $\langle e_4,e_5\rangle= \langle f_3,f_5\rangle$.
Suppose that $e_i=\alpha_{i,1}f_1+\alpha_{i,2}f_2+\alpha_{i,4}f_4$ for $1\le i\le 3$ and $e_i=\alpha_{i,3}f_3+\alpha_{i,5}f_5$ for $i=4,5$, where all $\alpha_{i,j}$ are from $\kk$. Rewriting the conditions $\lambda_{35}^7=0$, $\lambda_{23}^6\lambda_{15}^7=\lambda_{13}^6\lambda_{25}^7$ and $\lambda_{23}^6\lambda_{14}^7-\lambda_{13}^6\lambda_{24}^7+\lambda_{12}^6\lambda_{34}^7=0$ in terms of $\alpha_{i,j}$, we get
$$
\alpha_{31}\alpha_{53}+\alpha_{34}\alpha_{55}=0,\,\, (\alpha_{11}\alpha_{53}+\alpha_{14}\alpha_{55})\begin{vmatrix}\alpha_{21}&\alpha_{22}\\ \alpha_{31}&\alpha_{32}\end{vmatrix}=(\alpha_{21}\alpha_{53}+\alpha_{24}\alpha_{55})\begin{vmatrix}\alpha_{11}&\alpha_{12}\\ \alpha_{31}&\alpha_{32}\end{vmatrix}
$$
and
\begin{multline*}
(\alpha_{11}\alpha_{43}+\alpha_{14}\alpha_{45})\begin{vmatrix}\alpha_{21}&\alpha_{22}\\ \alpha_{31}&\alpha_{32}\end{vmatrix}-(\alpha_{21}\alpha_{43}+\alpha_{24}\alpha_{45})\begin{vmatrix}\alpha_{11}&\alpha_{12}\\ \alpha_{31}&\alpha_{32}\end{vmatrix}\\
+(\alpha_{31}\alpha_{43}+\alpha_{34}\alpha_{45})\begin{vmatrix}\alpha_{11}&\alpha_{12}\\ \alpha_{21}&\alpha_{22}\end{vmatrix}=0.
\end{multline*}
Noting that
$$
\alpha_{11}\begin{vmatrix}\alpha_{21}&\alpha_{22}\\ \alpha_{31}&\alpha_{32}\end{vmatrix}-\alpha_{21}\begin{vmatrix}\alpha_{11}&\alpha_{12}\\ \alpha_{31}&\alpha_{32}\end{vmatrix}
+\alpha_{31}\begin{vmatrix}\alpha_{11}&\alpha_{12}\\ \alpha_{21}&\alpha_{22}\end{vmatrix}=0
$$
and
$$
\alpha_{14}\begin{vmatrix}\alpha_{21}&\alpha_{22}\\ \alpha_{31}&\alpha_{32}\end{vmatrix}-\alpha_{24}\begin{vmatrix}\alpha_{11}&\alpha_{12}\\ \alpha_{31}&\alpha_{32}\end{vmatrix}
+\alpha_{34}\begin{vmatrix}\alpha_{11}&\alpha_{12}\\ \alpha_{21}&\alpha_{22}\end{vmatrix}=
\begin{vmatrix}\alpha_{11}&\alpha_{12}&\alpha_{14}\\ \alpha_{21}&\alpha_{22}&\alpha_{24}\\\alpha_{31}&\alpha_{32}&\alpha_{34}\end{vmatrix}\not=0,
$$
one sees that the equalities $\lambda_{35}^7=0$, $\lambda_{15}^7\lambda_{23}^6=\lambda_{13}^6\lambda_{25}^7$ imply $\alpha_{55}=0$ and the equality $\lambda_{14}^7\lambda_{23}^6-\lambda_{24}^7\lambda_{13}^6+\lambda_{34}^7\lambda_{12}^6=0$ implies $\alpha_{45}=0$ that contradicts the linear independence of $e_4$ and $e_5$.

\item $\lambda_{25}^7=0$. With the conditions $\lambda(V_2,V_6)+\lambda(V_3,V_5)=0$ and $\lambda(V,V_4)\subset V_7$ this implies that $\lambda(V_2,V_5)=0$. Note that $\lambda_{15}^7\lambda_{23}^6=\lambda_{16}^7\lambda_{23}^6=0$. If $\lambda_{15}^7=\lambda_{16}^7=0$, then, using other conditions assumed and obtained earlier, we get $\lambda(V,V_5)=0$ that is impossible. Hence, we have $\lambda_{23}^6=0$.  Since $\lambda(V_2,V_3)\subset V_6$ and $\lambda_{23}^7=0$, we have $\lambda(e_2,e_3)=0$.
Note that $f_2\in \langle e_2,e_3\rangle$ because in the opposite case we would have $\lambda_{15}^7=\lambda_{16}^7=0$ that have already been proved to be impossible.
Hence, $\langle e_2,e_3\rangle=\langle f_2,f_4\rangle$ and the condition $\lambda(V_2,V_5)=0$ gives also $\langle e_5,e_6\rangle=\langle f_3,f_6\rangle$. Then we also may assume that $e_1=f_1$ and $e_4=f_5$.
Let $\alpha,\beta\in\kk$ be such that $f_4=\alpha e_2+\beta e_3$. Then $\alpha\lambda_{12}^5+\beta\lambda_{13}^5=\alpha\lambda_{12}^6+\beta\lambda_{13}^6=0$ and $\alpha\lambda_{24}^7+\beta\lambda_{34}^7\not=0$.
Then the equalities $\lambda_{12}^5\lambda_{34}^7=\lambda_{13}^5\lambda_{24}^7$ and $\lambda_{23}^6\lambda_{14}^7-\lambda_{24}^7\lambda_{13}^6+\lambda_{12}^6\lambda_{34}^7=0$ imply $\lambda_{12}^5=\lambda_{13}^5=\lambda_{12}^6=\lambda_{13}^6=0$ that is impossible.
\end{enumerate}

The obtained contradiction shows that $\lambda\cap O\big(T^{2,2}({\scriptstyle\epsilon_{45}^n})\big)=\varnothing$.
\end{proof}

\begin{lemma}\label{T222lev} Suppose that $IW_1^{max}(A)\cong T^{2,2,2}$. Then
\begin{itemize}
\item  $lev(A)=4$ if and only if $A$ can be represented by $T^{2,2,2}({\scriptstyle\epsilon_{23}^n})$;
\item $lev(A)=5$ if and only if either $A$ can be represented by $T^{2,2,2}({\scriptstyle\epsilon_{24}^n})$ or $dim\,A=7$ and $A$  can be represented by $T^{2,2,2}({\scriptstyle\epsilon_{23}^4-\epsilon_{26}^7+\epsilon_{35}^7})$.
\end{itemize}
\end{lemma}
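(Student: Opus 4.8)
The plan is to follow the template of the proof of Lemma \ref{T22lev}, using the reduction Lemmas \ref{T2k2rest1}, \ref{T2k2rest2}, \ref{T2k2rest3} in place of the structural corollaries of Section 5, together with the monotonicity of $IW_1^{max}$ under degeneration (the lemma following Lemma \ref{maxcont}) to restrict the possible targets, and Lemma \ref{T22lev} and Table 2 to dispose of those targets whose maximal one-dimensional IW contraction is $T^{2,2}$ or ${\bf n}_3$.

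First I would settle the lower bounds. Since $IW_{e_1}\big(T^{2,2,2}({\scriptstyle\epsilon_{23}^n})\big)=T^{2,2,2}$ (level $3$ by Table 1) and $T^{2,2,2}({\scriptstyle\epsilon_{23}^n})\not\cong T^{2,2,2}$ — the latter admits a codimension-one subspace with zero self-product, the former does not — we get $T^{2,2,2}({\scriptstyle\epsilon_{23}^n})\xrightarrow{\not\cong}T^{2,2,2}$, hence $lev\big(T^{2,2,2}({\scriptstyle\epsilon_{23}^n})\big)\ge4$. By Lemma \ref{T2k2rest3}, every $A$ with $IW_1^{max}(A)\cong T^{2,2,2}$ and $A\not\cong T^{2,2,2}$ degenerates to $T^{2,2,2}({\scriptstyle\epsilon_{23}^n})$, so $lev(A)\ge5$ for every such $A$ except $A\cong T^{2,2,2}({\scriptstyle\epsilon_{23}^n})$; in particular $lev\big(T^{2,2,2}({\scriptstyle\epsilon_{24}^n})\big)\ge5$ (the degeneration is nontrivial because the two algebras are distinguished, e.g. by the rank of the Plücker form on the kernel of the multiplication $\Lambda^2(A/\Ann A)\to A^2$, which is $1$ and $2$ respectively), and in dimension $7$ the degeneration $T^{2,2,2}({\scriptstyle\epsilon_{23}^4-\epsilon_{26}^7+\epsilon_{35}^7})\to T^{2,2,2}({\scriptstyle\epsilon_{23}^7})$ is nontrivial since $\dim A^2=4$ on the left and $3$ on the right, giving level $\ge5$ there as well. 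Then I would prove that any $A$ with $IW_1^{max}(A)\cong T^{2,2,2}$ not representable by $T^{2,2,2}$, $T^{2,2,2}({\scriptstyle\epsilon_{23}^n})$, $T^{2,2,2}({\scriptstyle\epsilon_{24}^n})$ or (for $n=7$) $T^{2,2,2}({\scriptstyle\epsilon_{23}^4-\epsilon_{26}^7+\epsilon_{35}^7})$ has $lev(A)\ge6$: Lemmas \ref{T2k2rest1} and \ref{T2k2rest2} send such an $A$ either directly to a level-$5$ algebra above, or to $T^{2,2,2}({\scriptstyle\epsilon_{23}^{n-3}})$, or to the algebra $T^{2,2,2}({\scriptstyle\epsilon_{23}^{m+1}-\epsilon_{2,2+n-m}^{n}+\epsilon_{3,1+n-m}^{n}})$ (with $m=3$) of Lemma \ref{T2k2rest1}, and each of the latter two in turn degenerates to $T^{2,2,2}({\scriptstyle\epsilon_{24}^n})$, producing a chain of length $\ge6$.

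For the upper bounds I would argue by contradiction in the usual way. Suppose one of our algebras $A$ degenerates nontrivially to $B$ with $lev(B)=lev(A)$. By monotonicity $IW_1^{max}(B)$ is a degeneration of $T^{2,2,2}$, hence $IW_1^{max}(B)\in\{T^{2,2,2},T^{2,2},{\bf n}_3\}$ (it cannot be $\kk^n$ since $B$ is non-abelian). If $IW_1^{max}(B)\cong{\bf n}_3$, then $B\cong\eta_m$ by Table 2, so $B\cong\eta_4$ (resp. $\eta_5$), which is impossible since $\dim\Ann(\eta_4)=n-8<n-4=\dim\Ann(A)$ (resp. $\dim\Ann(\eta_5)=n-10<n-4$). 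If $IW_1^{max}(B)\cong T^{2,2}$, then by Lemma \ref{T22lev} $B$ is representable by $T^{2,2}({\scriptstyle\epsilon_{34}^n})$ in the level-$4$ case and by $T^{2,2}({\scriptstyle\epsilon_{45}^n})$ in the level-$5$ case; for $A\cong T^{2,2,2}({\scriptstyle\epsilon_{24}^n})$ one has $\dim\Ann\big(T^{2,2}({\scriptstyle\epsilon_{45}^n})\big)=n-5<n-4$, for $A$ the dimension-$7$ exceptional algebra the non-degeneration $T^{2,2,2}({\scriptstyle\epsilon_{23}^4-\epsilon_{26}^7+\epsilon_{35}^7})\not\to T^{2,2}({\scriptstyle\epsilon_{45}^7})$ is exactly Lemma \ref{ex222}, and the remaining case $T^{2,2,2}({\scriptstyle\epsilon_{23}^n})\not\to T^{2,2}({\scriptstyle\epsilon_{34}^n})$ I would establish with an explicitly written closed subset $\mathcal{R}\subset\mathcal{AC}_n$ invariant under lower triangular transformations, in the spirit of Lemma \ref{ex222}. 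Finally, if $IW_1^{max}(B)\cong T^{2,2,2}$, then $B$ lies in the classification of this section; by the lower-bound step its level forces $B$ to be representable by $T^{2,2,2}({\scriptstyle\epsilon_{23}^n})$ in the level-$4$ case (contradicting nontriviality), or by $T^{2,2,2}({\scriptstyle\epsilon_{24}^n})$ or $T^{2,2,2}({\scriptstyle\epsilon_{23}^4-\epsilon_{26}^7+\epsilon_{35}^7})$ in the level-$5$ case; the first is excluded as a trivial self-degeneration, $T^{2,2,2}({\scriptstyle\epsilon_{24}^7})\not\to T^{2,2,2}({\scriptstyle\epsilon_{23}^4-\epsilon_{26}^7+\epsilon_{35}^7})$ because $\dim A^2$ is $3$ and $4$ respectively, and $T^{2,2,2}({\scriptstyle\epsilon_{23}^4-\epsilon_{26}^7+\epsilon_{35}^7})\not\to T^{2,2,2}({\scriptstyle\epsilon_{24}^7})$ is again part of Lemma \ref{ex222}. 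Combining the bounds gives both equivalences.

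I expect the main obstacle to be twofold. The first is the bookkeeping in the ``leftover'' step: verifying that the reduction Lemmas \ref{T2k2rest1}--\ref{T2k2rest3} genuinely drive every algebra with $IW_1^{max}\cong T^{2,2,2}$ outside the short list down, in at most finitely many steps, to one of the two level-$5$ algebras, which requires pinning down the levels of the intermediate algebras $T^{2,2,2}({\scriptstyle\epsilon_{23}^{n-3}})$ and the higher-dimensional $T^{2,2,2}({\scriptstyle\epsilon_{23}^{m+1}-\epsilon_{2,2+n-m}^{n}+\epsilon_{3,1+n-m}^{n}})$. The second is the non-degeneration $T^{2,2,2}({\scriptstyle\epsilon_{23}^n})\not\to T^{2,2}({\scriptstyle\epsilon_{34}^n})$: here the cheap invariants $\dim A^2$ and $\dim\Ann$ coincide for the two algebras, and $\mathcal{R}$ must be cut out by a combination of shifted-annihilator conditions, conditions of the form $\lambda(V_i,V_j)\subset V_k$, and polynomial relations among the structure constants, exactly as in the proof of Lemma \ref{ex222}, which already handles the two hardest instances of this phenomenon.
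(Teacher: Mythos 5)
Your overall architecture is the same as the paper's: lower bounds from explicit non-trivial degenerations through $T^{2,2,2}({\scriptstyle\epsilon_{23}^n})$, upper bounds by assuming a non-trivial degeneration to an algebra $B$ of equal level, restricting $IW_1^{max}(B)$ by monotonicity, and killing each candidate by a $\dim A^2$ / $\dim\Ann$ count, by Lemma \ref{ex222}, or by an explicit invariant closed set. Those parts of your proposal are fine (indeed, the non-degeneration $T^{2,2,2}({\scriptstyle\epsilon_{23}^n})\not\to T^{2,2}({\scriptstyle\epsilon_{34}^n})$ that you expect to be hard is handled in the paper by the cheap set $\lambda(V_1,V_4)\subset V_n$, $\lambda(V_2,V_3)\subset V_n$, $\lambda(V_2,V_4)=0$, with no polynomial relations needed).

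The genuine gap is in your ``leftover'' step, and it is not mere bookkeeping. You claim that Lemmas \ref{T2k2rest1} and \ref{T2k2rest2} drive every $A$ outside the short list down to a level-five algebra. But those lemmas have hypotheses: \ref{T2k2rest1} requires $\dim A^2>3$ and \ref{T2k2rest2} requires $\dim A^2=3$ together with $\dim\Ann(A)<n-4$. Neither applies to the residual class of algebras with $\dim A^2=3$ and $\dim\Ann(A)=n-4$, i.e.\ structures with $\mu(e_1,e_{i+1})=e_{i+n-3}$ and arbitrary products $\mu(e_2,e_3),\mu(e_2,e_4),\mu(e_3,e_4)\in\langle e_{n-2},e_{n-1},e_n\rangle$. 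This class is far from empty, and without analyzing it you cannot prove the ``only if'' direction of the level-five statement: some member of it could a priori be a fifth algebra of level $5$. The paper closes this with a dedicated normal-form computation (normalizing $\mu(e_2,e_3)=e_n$, passing to an eigenvector of the matrix $\begin{pmatrix}\mu_{24}^{n-2}&\mu_{34}^{n-2}\\\mu_{24}^{n-1}&\mu_{34}^{n-1}\end{pmatrix}$, then a Jordan-form case split) showing every such algebra is isomorphic to $T^{2,2,2}$, $T^{2,2,2}({\scriptstyle\epsilon_{23}^n})$ or $T^{2,2,2}({\scriptstyle\epsilon_{24}^n})$, or degenerates non-trivially to the last. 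Note also that the same gap re-enters your upper-bound argument: when $IW_1^{max}(B)\cong T^{2,2,2}$ and $lev(B)=5$ you invoke ``the classification of this section'' to force $B$ onto the two-element list, but that classification is exactly what the missing residual analysis is needed to establish. Finally, even where \ref{T2k2rest2} does apply, its target is $T^{2,2,2}({\scriptstyle\epsilon_{25}^n})$, which is not on your list of intermediate algebras and still needs the extra degeneration $T^{2,2,2}({\scriptstyle\epsilon_{25}^n})\to T^{2,2,2}({\scriptstyle\epsilon_{24}^n})$.
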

\begin{proof} We have $A\to T^{2,2,2}({\scriptstyle\epsilon_{23}^n})$ by Lemma \ref{T2k2rest3}. Since $T^{2,2,2}({\scriptstyle\epsilon_{23}^n})\xrightarrow{\not\cong}T^{2,2,2}$, we have $lev\big(T^{2,2,2}({\scriptstyle\epsilon_{23}^n})\big)\ge 4$. Hence, if  $lev(A)=4$, then $A$ can be represented by $T^{2,2,2}({\scriptstyle\epsilon_{23}^n})$. If $lev\big(T^{2,2,2}({\scriptstyle\epsilon_{23}^n})\big)>4$, then $T^{2,2,2}({\scriptstyle\epsilon_{23}^n})$ has to degenerate to some algebra $B$ with $lev(B)=4$. Since in this case $IW_1^{max}\big(T^{2,2,2}({\scriptstyle\epsilon_{23}^n})\big)\to IW_1^{max}(B)$, we have $IW_1^{max}(B)\cong {\bf n}_3$, $IW_1^{max}(B)\cong T^{2,2}$ or $IW_1^{max}(B)\cong T^{2,2,2}$. In the last case $B\cong T^{2,2,2}({\scriptstyle\epsilon_{23}^n})$ by Lemma \ref{T2k2rest3}. In the remaining cases $B$ is isomorphic to $\eta_4$ or $T^{2,2}({\scriptstyle\epsilon_{34}^{n}})$ (see Lemma \ref{T22lev}). Since $T^{2,2,2}({\scriptstyle\epsilon_{23}^n})\not\to_{(1,5,n+1)}\eta_4$ and $T^{2,2,2}({\scriptstyle\epsilon_{23}^n})\not\to_{(1,4,n),(2,3,n),(2,4,n+1)}T^{2,2}({\scriptstyle\epsilon_{34}^{n}})$, we have $lev\big(T^{2,2,2}({\scriptstyle\epsilon_{23}^n})\big)=4$.

Since both $T^{2,2,2}({\scriptstyle\epsilon_{24}^n})$ and $T^{2,2,2}({\scriptstyle\epsilon_{23}^4-\epsilon_{26}^7+\epsilon_{35}^7})$ degenerate non-trivially to $T^{2,2,2}({\scriptstyle\epsilon_{23}^n})$, both of these algebras have levels not less than five.

Let us show now that if $IW_1^{max}(A)\cong T^{2,2,2}$ and $A\not\cong T^{2,2,2},T^{2,2,2}({\scriptstyle\epsilon_{23}^n})$, then either $A\to T^{2,2,2}({\scriptstyle\epsilon_{24}^n})$ or $dim\,A=7$ and $A\to T^{2,2,2}({\scriptstyle\epsilon_{23}^4-\epsilon_{26}^7+\epsilon_{35}^7})$. If $dim\,A^2>3$, then either $dim\,A=7$ and $A\to T^{2,2,2}({\scriptstyle\epsilon_{23}^4-\epsilon_{26}^7+\epsilon_{35}^7})$ or $dim\,A>7$ and $A\to T^{2,2,2}({\scriptstyle\epsilon_{23}^{n-3}})$ by Lemma \ref{T2k2rest1}. Since $T^{2,2,2}({\scriptstyle\epsilon_{23}^{n-3}})\xrightarrow{e_1,e_2,e_4,te_3,e_5\dots,e_{n-4},e_{n-3}-e_{n-1},e_{n-2},e_n,te_{n-1}}T^{2,2,2}({\scriptstyle\epsilon_{24}^n})$, we may assume that $dim\,A^2=3$.

If $dim\,\Ann(A)<n-4$, then $A\to T^{ 2,2,2}({\scriptstyle\epsilon_{25}^n})$ by Lemma \ref{T2k2rest2}. Since 
$$T^{ 2,2,2}({\scriptstyle\epsilon_{25}^n})\xrightarrow{e_1,e_2,e_3,e_4+e_5,te_5,e_6,\dots,e_n} T^{2,2,2}({\scriptstyle\epsilon_{24}^n}),$$
it remains to consider the case where $A$ is represented by a structure $\mu$ such that $\mu(e_1,e_{i+1})=e_{i+n-3}$ for $1\le i\le 3$ and $\mu_{i,j}^k=0$ if either $max(i,j)>4$ or $k<n-2$.

We also may assume that $\mu_{23}^n\not=0$ (see the proof of Lemma \ref{T2k2rest3}). 
Replacing $e_4$ by $\mu_{23}^2e_2+\mu_{23}^3e_3+\mu_{23}^4e_4$ and $e_n$ by $\mu(e_2,e_3)$, we may assume that $\mu(e_2,e_3)=e_n$.
Let $\gamma$ be an eigenvalue of the matrix $\begin{pmatrix}\mu_{24}^{n-2}&\mu_{34}^{n-2}\\\mu_{24}^{n-1}&\mu_{34}^{n-1}\end{pmatrix}$. Then there are some $\alpha,\beta\in\kk$ not all zero such that $(\mu_{24}^{n-2}-\gamma)e_2+\mu_{24}^{n-1}e_3,\mu_{34}^{n-2}e_2+(\mu_{34}^{n-1}-\gamma)e_3\in\langle \alpha e_2+\beta e_3\rangle$. We may assume that $\beta\not=0$. Then replacing $e_3$ by $\alpha e_2+\beta e_3$, $e_{n-1}$ by $\alpha e_{n-2}+\beta e_{n-1}$ and $e_4$ by $e_4+\gamma e_1$, we may assume that $\mu_{24}^2=\mu_{34}^2=0$.

It remains to consider two cases.
\begin{enumerate}
\item If $\mu(e_3,e_4)\not=0$, then $\mu\xrightarrow{e_1,te_2,e_3,\dots,e_{n-3},te_{n-2},e_{n-1},e_n}T^{2,2,2}({\scriptstyle \mu_{34}^{n-1}\epsilon_{34}^{n-1}+\mu_{34}^n\epsilon_{34}^n})$, where  $T^{2,2,2}({\scriptstyle \mu_{34}^{n-1}\epsilon_{34}^{n-1}+\mu_{34}^n\epsilon_{34}^n})$ is the algebra with the multiplication table 
$e_1e_{s+1}=e_{s+n-3}$ ($1\le s\le 3$), $e_3e_4=\mu_{34}^{n-1}e_{n-1}+\mu_{34}^{n}e_n$. It is clear that
$T^{2,2,2}({\scriptstyle \mu_{34}^{n-1}\epsilon_{34}^{n-1}+\mu_{34}^n\epsilon_{34}^n})\cong T^{2,2,2}({\scriptstyle \epsilon_{24}^n}).$
\item If $\mu(e_3,e_4)=0$, then we consider the operator $L_{e_2}:\langle e_3,e_4\rangle\rightarrow \langle e_{n-1},e_n\rangle$ and put it in the Jordan normal form. Thus, replacing $e_3$ and $e_4$ by their linear combinations and making the same linear replacement with $e_{n-1}$ and $e_n$, we may assume that either $\mu(e_2,e_3)=\gamma e_{n-1}+e_n$ and $\mu(e_2,e_4)=\gamma e_n$ for some $\gamma\in\kk$ or $\mu(e_2,e_3)=\gamma_3 e_{n-1}$ and $\mu(e_2,e_4)=\gamma_4 e_n$ for some $\gamma_3,\gamma_4\in\kk$. In the first case, replacing $e_2$ by $e_2-\gamma e_1$, one sees that $\mu\cong T^{2,2,2}({\scriptstyle \epsilon_{23}^n})$. If the second case, replacing $e_2$ by $e_2-\gamma_3e_1$, we get the algebra  $T^{2,2,2}({\scriptstyle (\gamma_4-\gamma_3)\epsilon_{24}^n})$ with the multiplication table 
$e_1e_{s+1}=e_{s+n-3}$ ($1\le s\le 3$), $e_2e_4=(\gamma_4-\gamma_3)e_n$. If $\gamma_3=\gamma_4$, then we have $\mu\cong T^{2,2,2}$. If $\gamma_3\not=\gamma_4$, then $\mu\cong T^{2,2,2}({\scriptstyle \epsilon_{24}^n})$.
\end{enumerate}

It remains to show that if $A$ is represented by $T^{2,2,2}({\scriptstyle\epsilon_{24}^n})$ or $dim\,A=7$ and $A$  is represented by $T^{2,2,2}({\scriptstyle\epsilon_{23}^4-\epsilon_{26}^7+\epsilon_{35}^7})$, then $lev(A)\le 5$. If it is not so, then $A$ has to degenerate to some algebra $B$ of level $5$ and we have $IW_1^{max}(B)\cong {\bf n}_3$, $IW_1^{max}(B)\cong T^{2,2}$ or $IW_1^{max}(B)\cong T^{2,2,2}$. Then $B$ can be represented by $\eta_5$, $T^{2,2}({\scriptstyle \epsilon_{45}^n})$, $T^{2,2,2}({\scriptstyle\epsilon_{24}^n})$ or $T^{2,2,2}({\scriptstyle\epsilon_{23}^4-\epsilon_{26}^7+\epsilon_{35}^7})$, where the last case is possible only if $dim\,A=7$.

Note that $T^{2,2,2}({\scriptstyle\epsilon_{24}^n}),T^{2,2,2}({\scriptstyle\epsilon_{23}^4-\epsilon_{26}^7+\epsilon_{35}^7})\not\to_{(1,10,n+1)}\eta_5$. If $n=7$, then we have
$T^{2,2,2}({\scriptstyle\epsilon_{23}^4-\epsilon_{26}^7+\epsilon_{35}^7})\not\to T^{2,2}({\scriptstyle \epsilon_{45}^n}),T^{2,2,2}({\scriptstyle\epsilon_{24}^n})$ by Lemma \ref{ex222}, and hence $lev\big(T^{2,2,2}({\scriptstyle\epsilon_{23}^4-\epsilon_{26}^7+\epsilon_{35}^7})\big)\le 5$. Finally, we have $T^{2,2,2}({\scriptstyle\epsilon_{24}^n})\not\to_{(1,5,n+1)} T^{2,2}({\scriptstyle \epsilon_{45}^n})$ and $T^{2,2,2}({\scriptstyle\epsilon_{24}^n})\not\to_{(1,1,n-2)} T^{2,2,2}({\scriptstyle\epsilon_{23}^4-\epsilon_{26}^7+\epsilon_{35}^7})$ that provides $lev\big(T^{2,2,2}({\scriptstyle\epsilon_{24}^n})\big)\le 5$.
\end{proof}

\begin{rema} Note that, due to the proof of Lemma \ref{T222lev}, one has $T^{2,2,2}({\scriptstyle\epsilon_{23}^4-\epsilon_{26}^7+\epsilon_{35}^7})\oplus\kk\to T^{2,2,2}({\scriptstyle\epsilon_{24}^6})\cong T^{2,2,2}({\scriptstyle\epsilon_{24}^5})\oplus\kk$ while $T^{2,2,2}({\scriptstyle\epsilon_{23}^4-\epsilon_{26}^7+\epsilon_{35}^7})\not\to T^{2,2,2}({\scriptstyle\epsilon_{24}^5})$ by Lemma \ref{ex222}. This situation is in contrast with the fact that $A\cong B$ if and only if $A\oplus\kk\cong B\oplus\kk$.
\end{rema}

\begin{lemma}\label{T2222lev}
Suppose that $IW_1^{max}(A)\cong T^{2,2,2,2}$. If $A\not\cong T^{2,2,2,2}$, then $lev(A)\ge 7$.
\end{lemma}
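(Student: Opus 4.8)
The plan is to reduce everything to one new degeneration of $T^{2,2,2,2}({\scriptstyle\epsilon_{23}^n})$ and then to quote the classification of algebras with maximal IW contraction $T^{2,2,2}$.

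Since $IW_1^{max}(A)\cong T^{2,2,2,2}$ (so $n\ge 9$) and $A\not\cong T^{2,2,2,2}$, Lemma \ref{T2k2rest3} applied with $m=4$ gives $A\to T^{2,2,2,2}({\scriptstyle\epsilon_{23}^n})$. Whether or not $A\cong T^{2,2,2,2}({\scriptstyle\epsilon_{23}^n})$, this yields $lev(A)\ge lev\big(T^{2,2,2,2}({\scriptstyle\epsilon_{23}^n})\big)$, so it suffices to prove $lev\big(T^{2,2,2,2}({\scriptstyle\epsilon_{23}^n})\big)\ge 7$.

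Next I would exhibit the non-trivial degeneration $T^{2,2,2,2}({\scriptstyle\epsilon_{23}^n})\xrightarrow{\not\cong} T^{2,2,2}({\scriptstyle\epsilon_{23}^{n-3}})$, where $T^{2,2,2}({\scriptstyle\epsilon_{23}^{n-3}})$ denotes the algebra from the first row of the table of Lemma \ref{T2k2rest1} taken with $m=3$ (that is, $e_1e_{i+1}=e_{i+n-3}$ for $1\le i\le 3$ and $e_2e_3=e_{n-3}$). It is produced by a parameterized basis that rescales only $e_5$ by $t$ and permutes $e_{n-3},e_{n-2},e_{n-1},e_n$: in the limit $e_1e_5\to 0$, so $L_{e_1}$ drops to rank $3$ while $e_2e_3$ stays a product whose value lies outside the image of $L_{e_1}$, and after the permutation this is precisely $T^{2,2,2}({\scriptstyle\epsilon_{23}^{n-3}})$. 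The degeneration is non-trivial because $IW_1^{max}\big(T^{2,2,2,2}({\scriptstyle\epsilon_{23}^n})\big)\cong T^{2,2,2,2}$ whereas $IW_1^{max}\big(T^{2,2,2}({\scriptstyle\epsilon_{23}^{n-3}})\big)\cong T^{2,2,2}$ (the latter being the short rank-of-$(L_a)^m$ check used already in Corollary \ref{2k2sf}), and $T^{2,2,2,2}\not\cong T^{2,2,2}$.

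Finally, $\dim\big(T^{2,2,2}({\scriptstyle\epsilon_{23}^{n-3}})\big)^2=4$, while the squares of $T^{2,2,2}$, $T^{2,2,2}({\scriptstyle\epsilon_{23}^n})$ and $T^{2,2,2}({\scriptstyle\epsilon_{24}^n})$ are three-dimensional and $T^{2,2,2}({\scriptstyle\epsilon_{23}^4-\epsilon_{26}^7+\epsilon_{35}^7})$ is seven-dimensional whereas $T^{2,2,2}({\scriptstyle\epsilon_{23}^{n-3}})$ has dimension $n\ge 9$. Hence $T^{2,2,2}({\scriptstyle\epsilon_{23}^{n-3}})$ is isomorphic to none of the algebras of level at most $5$ with maximal IW contraction $T^{2,2,2}$, so Lemma \ref{T222lev} (together with the trivial bound $lev\ge lev(T^{2,2,2})=3$) forces $lev\big(T^{2,2,2}({\scriptstyle\epsilon_{23}^{n-3}})\big)\ge 6$. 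Prepending the degeneration of the previous paragraph gives $lev\big(T^{2,2,2,2}({\scriptstyle\epsilon_{23}^n})\big)\ge 1+6=7$, and therefore $lev(A)\ge 7$. The only genuine calculation is writing down the parameterized basis for $T^{2,2,2,2}({\scriptstyle\epsilon_{23}^n})\to T^{2,2,2}({\scriptstyle\epsilon_{23}^{n-3}})$ and checking its limit; I expect that --- although routine --- to be the main obstacle, the rest being citations and comparisons of dimensions of squares.
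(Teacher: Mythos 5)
Your proposal is correct and follows the paper's proof exactly: reduce to $T^{2,2,2,2}({\scriptstyle\epsilon_{23}^n})$ via Lemma \ref{T2k2rest3}, degenerate that algebra to $T^{2,2,2}({\scriptstyle\epsilon_{23}^{n-3}})$ by sending $e_1e_5$ to zero and permuting the last four basis vectors (this is precisely the paper's parameterized basis $e_1,\dots,e_4,te_5,e_6,\dots,e_{n-4},e_n,e_{n-3},\dots,e_{n-1}$), and then invoke Lemma \ref{T222lev}. The only nitpick is that to conclude $lev\big(T^{2,2,2}({\scriptstyle\epsilon_{23}^{n-3}})\big)\ge 6$ you should use the bound $lev\ge 4$ coming from the non-trivial IW contraction onto $T^{2,2,2}$ (which your observation $\dim A^2=4\ne 3$ already justifies) rather than the stated $lev\ge 3$, since Lemma \ref{T222lev} alone does not rule out level $3$.
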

\begin{proof} By Lemma \ref{T2k2rest3} we have a degeneration $A\to T^{2,2,2,2}({\scriptstyle\epsilon_{23}^n})$, and hence it is enough to prove that $lev\big(T^{2,2,2,2}({\scriptstyle\epsilon_{23}^n})\big)\ge 7$. This follows from the non-trivial degeneration $T^{2,2,2,2}({\scriptstyle\epsilon_{23}^n})\xrightarrow{e_1,\dots,e_4,te_5,e_6,\dots,e_{n-4},e_n,e_{n-3},\dots,e_{n-1}}T^{2,2,2}({\scriptstyle\epsilon_{23}^{n-3}})$ (see Lemma \ref{T2k2rest1}) and that $lev\big(T^{2,2,2}({\scriptstyle\epsilon_{23}^{n-3}})\big)\ge 6$ by Lemma \ref{T222lev}.
\end{proof}

\section{Algebras with maximal IW contractions $T^{3}$ and $T^{3,2}$}

In this section we classify algebras $A$ of first five levels such that either $IW_1^{max}(A)\cong T^{3}$ and $dim\,A\ge 5$ or $IW_1^{max}(A)\cong T^{3,2}$. The remaining cases will be considered in the next section.

As usually, we start with a structure $\mu$ representing $A$ such that $\mu_{i,j}^k=0$ for $k\le max(i,j)$ and
\begin{itemize}
\item if $IW_1^{max}(A)\cong T^{3}$, then there are three integers $2\le i_1<i_2<i_3\le n$ such that $\mu(e_1,e_{i_1})=e_{i_2}$, $\mu(e_1,e_{i_2})=e_{i_3}$ and $\mu(e_1,e_s)=0$ for $s\not\in\{i_1,i_2\}$;
\item if $IW_1^{max}(A)\cong T^{3,2}$, then there are five different integers $2\le i_1<i_2<i_3\le n$ and $2\le j_1<j_2\le n$ such that $\mu(e_1,e_{i_1})=e_{i_2}$, $\mu(e_1,e_{i_2})=e_{i_3}$, $\mu(e_1,e_{j_1})=e_{j_2}$ and $\mu(e_1,e_s)=0$ for $s\not\in\{i_1,i_2,j_1\}$.
\end{itemize}

Let us set $J=\{i_1,i_2\}$ and $K=\{i_2,i_3\}$ in the case $IW_1^{max}(A)\cong T^{3}$ and $J=\{i_1,i_2,j_1\}$ and $K=\{i_2,i_3,j_2\}$ in the case $IW_1^{max}(A)\cong T^{3,2}$.
We will need the following analog of Lemma \ref{2k2}.

\begin{lemma}\label{3_32} In the settings described above, if $\mu_{i,j}^k\not=0$ for some $2\le i,j,k\le n$, then either $k\in K$ or $i,j\in J$.
\end{lemma}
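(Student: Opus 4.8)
The plan is to mimic the proof of part (1) of Lemma~\ref{2k2}, replacing the $(m+1)\times(m+1)$ minor used there by a suitably chosen $4\times 4$ (in the $T^3$ case) or $5\times 5$ (in the $T^{3,2}$ case) minor of the matrix of a perturbed left-multiplication operator. Concretely, suppose $\mu_{i,j}^k\neq 0$ for some $2\le i,j,k\le n$ with $k\notin K$ and with, say, $j\notin J$. Consider the element $v_\alpha=e_1+\alpha e_i$ for $\alpha\in\kk$, and look at the action of $L_{v_\alpha}^2$ (in the $T^3$ case) together with $L_{v_\alpha}$ itself, since $IW_1^{max}(A)\cong T^3$ forces $(L_{e_1})^2$ to have rank $1$ and $(L_{e_1})^3=0$, i.e. the relevant invariants that must be preserved are $r_1(v_\alpha)\le 2$ and $r_2(v_\alpha)\le 1$. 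The condition $IW_1^{max}(A)\cong T^3$ (respectively $T^{3,2}$) says precisely that for \emph{every} $w\in A$ one has $r_1(w)\le 2$ and $r_2(w)\le 1$ (respectively $r_1(w)\le 3$, $r_2(w)\le 1$); so it suffices to produce some $\alpha$ for which $v_\alpha$ violates one of these rank bounds.

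The key step is the bookkeeping that identifies which rank bound is violated and exhibits a non-vanishing minor witnessing it. One should split into cases according to whether the "extra" product $\mu(e_i,e_j)=\cdots+\mu_{i,j}^k e_k+\cdots$ feeds into the length-one part or the length-two part of the nilpotent chain. If $k\notin K$, then $e_k$ is not in the image of $L_{e_1}$, so $L_{v_\alpha}(e_j)=\alpha\mu(e_i,e_j)$ contributes a new direction; combined with $L_{v_\alpha}(e_{i_1})=e_{i_2}+\alpha\mu(e_i,e_{i_1})$ and $L_{v_\alpha}(e_{i_2})=e_{i_3}+\alpha\mu(e_i,e_{i_2})$ (and, in the $T^{3,2}$ case, $L_{v_\alpha}(e_{j_1})=e_{j_2}+\alpha\mu(e_i,e_{j_1})$), one forms the minor on the rows indexed by $i_2,i_3$ ($\,$and $j_2\,$) together with row $k$, and columns indexed by $i_1,i_2$ ($\,$and $j_1\,$) together with column $j$. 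As in Lemma~\ref{2k2}, the diagonal entries coming from the unperturbed chain are $1$ and the last diagonal entry is $\alpha\mu_{i,j}^k$, so the determinant is a polynomial in $\alpha$ whose coefficient of $\alpha$ (after extracting the obvious powers of $\alpha$ from the last row/column) equals $\mu_{i,j}^k\neq 0$; hence it is not identically zero, so for some $\alpha$ the rank of $L_{v_\alpha}$ exceeds $2$ (resp. $3$), a contradiction. The symmetric case $i\notin J$ is handled identically using $v_\alpha=e_1+\alpha e_j$, since $\mu_{j,i}^k=-\mu_{i,j}^k\neq 0$ by anticommutativity.

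The main obstacle I expect is the second alternative, where $k\in K$ fails but both $i,j\in J$ fails "one at a time" — i.e. one must be careful that the case hypothesis "$k\notin K$ or $i,j\in J$ is false" genuinely means $k\notin K$ \emph{and} ($i\notin J$ or $j\notin J$), so that at least one of the two perturbations $e_1+\alpha e_i$, $e_1+\alpha e_j$ is legitimate (the perturbing index lies outside $J$, so it does not already appear as a source in the $L_{e_1}$-chain and the computation of $L_{v_\alpha}$ on the chain generators stays clean). A secondary subtlety, present only for $T^{3,2}$, is that the chain has two components of different lengths, so one must check that adjoining the row $k$ and column $j$ to the $4\times4$ block built from $\{i_2,i_3,j_2\}\times\{i_1,i_2,j_1\}$ still yields a $5\times5$ minor with the required triangular structure; this is routine but needs the observation that $k\notin K=\{i_2,i_3,j_2\}$ guarantees row $k$ is disjoint from the block, and $j\notin J=\{i_1,i_2,j_1\}$ guarantees the same for the column. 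Once these disjointness points are nailed down, the determinant argument goes through verbatim as in Lemma~\ref{2k2}.
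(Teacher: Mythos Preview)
Your approach is correct and matches the paper's, which also perturbs to $v_\alpha=e_1+\alpha e_i$ and shows that the vectors $L_{v_\alpha}(e_s)$ for $s\in J\cup\{j\}$ become linearly independent for some $\alpha$ (the paper simply cites Lemma~\ref{2k2} for this step and concludes $\operatorname{rank}L_{v_\alpha}\ge |J|+1$). A few cleanups: the minors have size $(|J|+1)\times(|J|+1)$, i.e.\ $3\times 3$ for $T^3$ and $4\times 4$ for $T^{3,2}$, not $4\times 4$ and $5\times 5$ as you first write (your later row/column description gives the correct count); only the rank bound $r_1(v_\alpha)\le |J|$ is needed, so the detour through $L_{v_\alpha}^2$ and $r_2$ can be dropped entirely; and your worry that the perturbing index $i$ might lie in $J$ is harmless, since the $|J|\times|J|$ block on rows $K$ and columns $J$ is still the identity at $\alpha=0$ regardless of where $i$ sits.
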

\begin{proof} If $\mu_{i,j}^k\not=0$ for some $2\le i,j,k\le n$ such that $k\not\in K$ and $j\not\in J$, then one can show that for some $\alpha\in K$ the elements $L_{e_1+\alpha e_i}(e_{s})$ ($s\in J$) and $L_{e_1+\alpha e_i}(e_{j})$ are linearly independent (see the proof of Lemma \ref{2k2}). This would mean that the rank of $L_{e_1+\alpha e_i}$ is not less than $|J|+1$, and hence $IW_1^{max}(\mu)\not\to IW_{e_1+\alpha e_i}(\mu)$ that is impossible.
\end{proof}

Let now concentrate on the case $IW_1^{max}(A)\cong T^{3}$, $dim\,A\ge 5$. In this case Lemma \ref{3_32} implies that $\mu_{i,j}^{i_1}=\mu_{i,i_3}^j=0$ for any $1\le i,j\le n$. In particular, we may assume for convenience that $i_1=2$ and $i_3=n$. Thus, for some $2< r< n$, the structure $\mu$ satisfies the conditions $\mu(e_1,e_2)=e_r$, $\mu(e_1,e_r)=e_n$ and $\mu(e_1,e_s)=0$ for $s\not\in\{2,r\}$.

\begin{lemma}\label{T3rest1} If $IW_1^{max}(A)\cong T^{3}$ and $dim\,A^2>2$, then $A$ can be represented by the structure
\begin{center}
\begin{tabular}{|l|l|}
\hline
$\begin{array}{l}\eta_m({\scriptstyle\epsilon_{1,2m+1}^{n-1}+\epsilon_{2,2m+1}^n})\end{array}$&$\begin{array}{l}e_{2i-1}e_{2i}=e_{2m+1},\,\,1\le i\le m,\,\,e_1e_{2m+1}=e_{n-1},\,\,e_2e_{2m+1}=e_n\end{array}$\\
\hline
\end{tabular}
\end{center}
for some $m\ge 1$ such that $dim\,A\ge 2m+3$.
\end{lemma}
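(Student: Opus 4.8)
We keep the structure $\mu$ and the integer $r$ fixed before the statement of the lemma. Since $IW_1^{max}(A)\cong T^3$, Lemma~\ref{maxcont} together with the description of the one-dimensional IW contractions of algebras of generation type one in \cite{kpv19} yields $\operatorname{rank}L_a\le 2$, $\operatorname{rank}L_a^2\le 1$ and $L_a^3=0$ for every $a\in A$; in particular $A$ is $3$-Engel and hence nilpotent by Remark~\ref{nileng}. By Lemma~\ref{3_32} combined with the condition $\mu_{i,j}^k=0$ for $k\le\max(i,j)$, every product $\mu(e_i,e_j)$ with $i,j\ge 2$ and $\{i,j\}\ne\{2,r\}$ lies in $\la e_r,e_n\ra$, while $w:=\mu(e_2,e_r)\in\la e_{r+1},\dots,e_n\ra$. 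Thus $A^2\subseteq\la e_r,e_n,w\ra$, so $\dim A^2\le 3$, and since $\dim A^2>2$ we get $\dim A^2=3$, $w=w_0+\gamma e_n$ with $\gamma\in\kk$ and $0\ne w_0\in\la e_{r+1},\dots,e_{n-1}\ra$; replacing $e_2$ by $e_2-\gamma e_1$ we may assume $w=w_0$.

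The second step is to feed well-chosen elements into the bounds above. Applying $\operatorname{rank}L_v\le 2$ to $v=e_1+\lambda e_2$ (and using that the images of $e_2$ and $e_r$ under $L_v$ already span a plane containing the $w_0$-direction) forces $\mu(e_2,e_s)\in\la e_r\ra$ for every $s\ne 1,r$; the substitutions $e_s\mapsto e_s+\alpha_s e_1$ then give $\mu(e_2,e_s)=0$ for all such $s$. Applying $L_v^3=0$ successively to $v=e_1+\lambda e_2$, $v=e_2+e_r$ and $v=e_2+e_s$ yields $\mu(e_2,w_0)=0$, then $\mu(e_r,w_0)=0$, then $\mu(e_s,w_0)=0$, i.e.\ $w_0\in\Ann(A)$. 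Since $e_n\in\Ann(A)$ as well (Lemma~\ref{3_32} and the degree condition), $I:=\la e_n,w_0\ra$ is a $2$-dimensional ideal contained in $\Ann(A)$; moreover $A^3=\mu(A,e_r)=I$ (because $\mu(e_1,e_r)=e_n$ and $\mu(e_2,e_r)=w_0$) and $A^4=\mu(A,I)=0$.

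The quotient $\bar A:=A/I$ is anticommutative with $\bar A^2=\la\bar e_r\ra$ one-dimensional and $\bar A^3=0$; hence $IW_1^{max}(\bar A)\cong{\bf n}_3$, so by the classification recalled around Table~2 we have $\bar A\cong\eta_m$ for some $m\ge 1$, the classes $\bar e_1,\bar e_2$ form a hyperbolic pair ($\bar e_1\bar e_2=\bar e_r$), and $\dim\bar A=n-2\ge 2m+1$, i.e.\ $n\ge 2m+3$. Completing $\bar e_1,\bar e_2$ to a symplectic-type basis of $\bar A$ with centre $\bar e_r$ and lifting this basis to $A$ (keeping $e_1,e_2$, lifting the centre to $e_r$, and lifting the annihilator part arbitrarily), we obtain a basis of $A$ in which every product of basis vectors coincides with the corresponding product in $\eta_m$ up to an element of $I$. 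Finally, using the identities $e_n=\mu(e_1,e_r)$, $w_0=\mu(e_2,e_r)$ and one more application of the bounds $\operatorname{rank}L_a\le 2$, $L_a^3=0$, one adjusts the lifts (by elements of $I$ and by multiples of $e_1,e_2,e_r$) until all superfluous structure constants vanish; the resulting table is exactly that of $\eta_m({\scriptstyle\epsilon_{1,2m+1}^{n-1}+\epsilon_{2,2m+1}^n})$.

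I expect this last step to be the main obstacle. Since $\bar A\cong\eta_m$ forces $m$ to equal the Heisenberg rank of $A/A^3$, one is not free to enlarge the symplectic part while cleaning up, so the stray $e_n$- and $w_0$-components of the lifted brackets must be removed without disturbing the hyperbolic structure modulo $I$; concretely, one must show that each lifted ``later'' hyperbolic partner $p$ can be modified so that $\mu(p,e_r)=0$ and $\mu(p,q)=e_r$ on the nose for its partner $q$. This is the delicate point; Steps $1$--$3$ above, by contrast, are straightforward once the right auxiliary elements $e_1+\lambda e_2$, $e_2+e_r$, $e_2+e_s$ are chosen.
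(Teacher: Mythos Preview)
Your route via the quotient $A/I$ is different from the paper's and is in principle workable, but the part you flag as ``the delicate point'' is a genuine gap, and your proposed fix for it (adjusting lifts by elements of $I$ and by multiples of $e_1,e_2,e_r$) does not do the job. After your Step~2 you have $\mu(e_2,e_i)=0$ for $i\neq 1,r$ and $\mu(e_i,e_r)=\gamma_i e_n$ for $i\ge 3$; the obstruction is precisely these $\gamma_i$. Adding $\alpha e_1$ to $e_i$ sends $\gamma_i\mapsto\gamma_i+\alpha$ but simultaneously sends $\mu(e_2,e_i)$ from $0$ to $-\alpha e_r$, so you cannot kill both by such moves, and adding multiples of $e_2$, $e_r$ or elements of $I$ does not touch the $e_n$-coefficient of $\mu(e_i,e_r)$ at all. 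So no basis change of the type you describe removes the $\gamma_i$; they have to \emph{vanish} as a consequence of $IW_1^{max}(A)\cong T^3$.

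The paper accomplishes this in one line by a symmetry you did not exploit: after your cleanup, $L_{e_2}$ has exactly the same shape as $L_{e_1}$ (it sends $e_1\mapsto -e_r$, $e_r\mapsto w_0$, and everything else to~$0$), so one may interchange $e_1$ and $e_2$ and re-apply Lemma~\ref{3_32}. With $e_2$ playing the role of the distinguished vertex, the new $K$ is $\{r,s\}$ where $e_s$ is the direction of $w_0$; since $n\notin\{r,s\}$, Lemma~\ref{3_32} forces $\mu_{i,j}^n=0$ for all $(i,j)\neq(1,r)$, in particular every $\gamma_i=0$. Once this is known, all products other than $\mu(e_1,e_r)$ and $\mu(e_2,e_r)$ lie in $\langle e_r\rangle$, and a single Darboux normal form finishes the proof with no lifting at all. (Equivalently, without the swap you can check directly that $\operatorname{rank}L_{e_2+e_r+e_i}\le 2$ forces $\gamma_i=0$: the images of $e_1$, $e_2$ and $e_r$ under this operator are $-e_r-e_n$, $-w_0$ and $w_0+\gamma_i e_n$, which span a $3$-plane unless $\gamma_i=0$.) Either way, the point is that these components are zero, not that they can be transformed away; once you insert this, your argument goes through and in fact shows that only $m=1$ actually occurs.
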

\begin{proof} Let us represent $A$ by a structure $\mu$ satisfying the conditions described above.
Due to Lemma \ref{3_32}, $dim\,A^2>2$ if and only if $\mu(e_2,e_r)$, $e_r$ and $e_n$ are linearly independent. In this case $\mu(V,V)=\langle e_r,\mu(e_2,e_r),e_n\rangle$ and we may assume that $\mu(e_2,e_r)=e_s$ for some $r<s<n$.
Replacing $e_i$ by $e_i+\mu_{2,i}^re_1$ for $3\le i\le r-1$, we may assume that $\mu_{2,i}^r=0$ for all $i\ge 3$. Note that $\mu_{i,j}^s=0$ for all $1\le i<j\le n$ except $(i,j)=(2,r)$ by Lemma \ref{3_32}. Then we have $\mu(e_2,e_i)\subset \langle e_n\rangle$ for $3\le i\le n$, $i\not=r$. Since $e_r$ and $e_s$ belong to $\Im L_{e_2}$, we have $\mu(e_2,e_i)=0$ for $i\not\in\{1,r\}$. If we interchange $e_1$ and $e_2$ and apply Lemma \ref{3_32} to the obtained algebra structure, we will see that $\mu_{i,j}^n=0$ for $1\le i<j\le n$ except $(i,j)=(1,r)$. Then we may assume that $s=n-1$ and $\mu(e_i,e_j)\subset \langle e_r\rangle$ for all $1\le i<j\le n$ except $(i,j)=(1,r)$ and $(i,j)=(2,r)$. Then the whole structure $\mu$ is determined by a skew-symmetric map $\mu:\langle e_3,\dots,e_{r-1}\rangle\times\langle e_3,\dots,e_{r-1}\rangle\rightarrow \langle e_r\rangle$. Then we can use the canonical form for a skew-symmetric bilinear map and get the required isomorphism $\mu\cong \eta_m({\scriptstyle\epsilon_{1,2m+1}^{n-1}+\epsilon_{2,2m+1}^n})$ for some $m\ge 1$.
\end{proof}

\begin{lemma}\label{T3rest2} If $IW_1^{max}(A)\cong T^{3}$ and $dim\,A^2=2$, then either $dim\,A\ge 6$ and $A$ degenerates to the algebra
\begin{center}
\begin{tabular}{|l|l|}
\hline
$\begin{array}{l}\eta_2({\scriptstyle\epsilon_{15}^n})\end{array}$&$\begin{array}{l}e_1e_2=e_3e_4=e_5,\,\,e_1e_5=e_n\end{array}$\\
\hline
\end{tabular}
\end{center}
or $A$ can be represented by a structure $\mu$ such that $\mu(e_1,e_2)=e_3$, $\mu(e_1,e_3)=e_n$, $\mu(e_1,e_i)=0$ for $i\ge 4$, $\mu(e_2,e_3)=0$ and $\mu_{i,j}^k=0$ if $k<n$ and $max(i,j)\ge 3$.
\end{lemma}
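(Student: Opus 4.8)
\emph{Proof idea.} As prepared before Lemma~\ref{T3rest1}, I would start from a structure $\mu$ representing $A$ with $\mu_{i,j}^k=0$ for $k\le\max(i,j)$, $\mu(e_1,e_2)=e_r$, $\mu(e_1,e_r)=e_n$ and $\mu(e_1,e_s)=0$ for $s\notin\{2,r\}$, where $2<r<n$. Since $e_r,e_n$ are independent elements of $A^2$ and $\dim A^2=2$, we get $A^2=\langle e_r,e_n\rangle$; hence $\mu(e_i,e_j)=\mu_{i,j}^re_r+\mu_{i,j}^ne_n$ for all $i,j$, and $\mu_{i,j}^k=0$ for $k\le\max(i,j)$ shows $\mu_{i,j}^r=0$ unless $i,j<r$, and in particular $\mu(e_2,e_r)\in\langle e_n\rangle$. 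Replacing $e_s$ by $e_s+\mu_{2,s}^re_1$ for $3\le s\le n$, $s\ne r$ — which leaves all the listed properties intact — I may additionally assume $\mu_{2,s}^r=0$ for all such $s$. After this reduction the only product of basic elements with a nonzero $e_r$-component besides $\mu(e_1,e_2)=e_r$ has the form $\mu(e_i,e_j)$ with $3\le i<j<r$. The proof then splits on whether such a product is nonzero.

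\emph{First}, suppose $\mu_{i,j}^r\ne0$ for some $3\le i<j<r$; after rescaling $e_i$, assume $\mu_{i,j}^r=1$. The six indices $1,2,i,j,r,n$ are distinct, so $\dim A\ge6$, and I would produce a degeneration $\mu\xrightarrow{\kappa_1(t)e_1,\dots,\kappa_n(t)e_n}B$ with $\kappa_s(t)=1$ for $s=1$, $\kappa_s(t)=t$ for $s\in\{i,j\}$, and $\kappa_s(t)=t^2$ otherwise. Writing $w(e_s)$ for the $t$-exponent of $\kappa_s$, every nonzero $\mu_{k,l}^m$ has $m\in\{r,n\}$, so $w(e_m)=2$, and $w(e_k)+w(e_l)\ge2$ with equality exactly for the triples $(1,2,r),(1,r,n),(i,j,r),(i,j,n)$ — here one uses $\mu(e_1,e_s)=0$ for $s\notin\{2,r\}$, so that every product involving $e_1$ has weight $\ge2$. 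Thus $B$ has $e_1e_2=e_r$, $e_1e_r=e_n$, $e_ie_j=e_r+\gamma e_n$ with $\gamma=\mu_{i,j}^n$, and nothing else; replacing $e_r$ by $e_r+\gamma e_n$ and then $e_2$ by $e_2+\gamma e_r$, and renaming the active indices $1,2,i,j,r,n$ to $1,2,3,4,5,n$, identifies $B$ with $\eta_2({\scriptstyle\epsilon_{15}^n})$. Hence $A\to\eta_2({\scriptstyle\epsilon_{15}^n})$ and $\dim A\ge6$, the first alternative of the statement.

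\emph{Otherwise}, no such product is nonzero, so $\mu(e_1,e_2)$ is the only product with an $e_r$-component, and every other product lies in $\langle e_n\rangle$. Reordering the basis to put $e_r$ in the third position (pushing $e_3,\dots,e_{r-1}$ to positions $4,\dots,r$) keeps $\mu_{i,j}^k=0$ for $k\le\max(i,j)$, since the only nonzero product landing in the new $e_3$ is $\mu(e_1,e_2)$ and all other nonzero products land in $e_n$. In this basis $\mu(e_1,e_2)=e_3$, $\mu(e_1,e_3)=e_n$, $\mu(e_1,e_s)=0$ for $s\ge4$, $\mu(e_2,e_3)=\lambda e_n$ for some $\lambda\in\kk$, and $\mu_{i,j}^k=0$ whenever $k<n$ and $\max(i,j)\ge3$; replacing $e_2$ by $e_2-\lambda e_1$ removes the term $\lambda e_n$ without disturbing anything else, which is exactly the structure in the statement.

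The bookkeeping that the substitutions and the reordering preserve the normalising conditions is routine. The one genuinely delicate point is the first case: one must choose the contraction weights so that no unwanted structure constant survives, and then recognise the limit algebra — which a priori still carries the term $\gamma e_n$ in the product $e_ie_j$ — as $\eta_2({\scriptstyle\epsilon_{15}^n})$ by absorbing that term.
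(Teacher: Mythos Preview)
Your argument is correct and follows essentially the same route as the paper's proof: start from the normalised structure with $\mu(e_1,e_2)=e_r$, $\mu(e_1,e_r)=e_n$, kill the $e_r$-component of $\mu(e_2,e_s)$ by adding multiples of $e_1$, and then split on whether some $\mu_{i,j}^r$ with $3\le i<j<r$ survives, using the same contraction weights $(1,t,t,\dots)$ in the first branch. The only cosmetic differences are that the paper kills $\mu(e_2,e_r)$ at the outset (via $e_2\mapsto e_2-\mu_{2,r}^ne_1$) rather than at the end, and simply swaps $e_3$ with $e_r$ rather than your cyclic shift; neither affects the logic.
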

\begin{proof} As it is observed above, we may represent $A$ by a structure $\mu$ such that $\mu_{i,j}^k=0$ for $k\le max(i,j)$ and, for some $2< r< n$, one has $\mu(e_1,e_2)=e_r$, $\mu(e_1,r)=e_n$ and $\mu(e_1,e_s)=0$ for $s\not\in\{2,r\}$. Since $dim\,A^2=2$, we have $\mu_{i,j}^k=0$ whenever $k\not\in\{r,n\}$. Replacing $e_2$ by $e_2-\mu_{2,r}^ne_1$, we may assume also that $\mu(e_2,e_r)=0$.

Replacing $e_i$ by $e_i+\mu_{2,i}^re_1$ for $3\le i\le r-1$, we may assume that $\mu(e_2,e_i)\subset \langle e_n\rangle$ for all $2\le i\le n$. If $\mu_{i,j}^r=0$ for all $3\le i<j<r$, then we clearly can permutes $e_3$ and $e_r$ and get a structure representing $A$ and satisfying the required conditions. Suppose now that $\mu_{i,j}^r\not=0$ for all $3\le i<j<r$. We set $\kappa_s(t):=\begin{cases}
1,&\mbox{ if $s=1$},\\
t,&\mbox{ if $s\in\{i,j\}$},\\
t^2,&\mbox{ otherwise.}
\end{cases}$

We have
$
\mu\xrightarrow{\kappa_1(t)e_1,\dots,\kappa_n(t)e_n}T({\scriptstyle\epsilon_{1,2}^r+\epsilon_{1,r}^n+\mu_{i,j}^r\epsilon_{i,j}^r+\mu_{i,j}^n\epsilon_{i,j}^n})
$, where $T({\scriptstyle\epsilon_{1,2}^r+\epsilon_{1,r}^n+\mu_{i,j}^r\epsilon_{i,j}^r+\mu_{i,j}^n\epsilon_{i,j}^n})$ is the algebra with the multiplication table $e_1e_2=e_r$, $e_1e_r=e_n$, $e_ie_j=\mu_{i,j}^re_r+\mu_{i,j}^ne_n$. It is clear that $T({\scriptstyle\epsilon_{1,2}^r+\epsilon_{1,r}^n+\mu_{i,j}^r\epsilon_{i,j}^r+\mu_{i,j}^n\epsilon_{i,j}^n})\cong \eta_2({\scriptstyle\epsilon_{15}^n})$.
\end{proof}

To classify the algebras $A$ with $IW_1^{max}(A)\cong T^{3}$ of levels not greater than five, we will need the algebra structures presented in the next table.
\begin{center}
Table 5. {\it Algebras with $IW_1^{max}(A)\cong T^3$.}\\
\begin{tabular}{|l|l|l|}
\hline
{\rm notation}&{\rm multiplication table}&{\rm dimension}\\
\hline
$\begin{array}{l}T^3({\scriptstyle\epsilon_{23}^{n-1}})\end{array}$&$\begin{array}{l}e_1e_2=e_3,\,\,e_1e_3=e_n,\,\,e_2e_3=e_{n-1}\end{array}$&$\begin{array}{l}n\ge 5\end{array}$\\
\hline
$\begin{array}{l}T^3({\scriptstyle\epsilon_{24}^n})\end{array}$&$\begin{array}{l}e_1e_2=e_3,\,\,e_1e_3=e_2e_4=e_n\end{array}$&$\begin{array}{l}n\ge 5\end{array}$\\
\hline
$\begin{array}{l}T^3({\scriptstyle\epsilon_{34}^n})\end{array}$&$\begin{array}{l}e_1e_2=e_3,\,\,e_1e_3=e_3e_4=e_n\end{array}$&$\begin{array}{l}n\ge 5\end{array}$\\
\hline
$\begin{array}{l}T^3({\scriptstyle\epsilon_{45}^n})\end{array}$&$\begin{array}{l}e_1e_2=e_3,\,\,e_1e_3=e_4e_5=e_n\end{array}$&$\begin{array}{l}n\ge 6\end{array}$\\
\hline
\end{tabular}
\end{center}


\begin{lemma}\label{T3lev} Suppose that $IW_1^{max}(A)\cong T^3$ and $dim\,A\ge 5$. Then
\begin{itemize}
\item  $lev(A)=4$ if and only if $A$ can be represented by $T^3({\scriptstyle\epsilon_{23}^{n-1}})$ or $T^3({\scriptstyle\epsilon_{24}^{n}})$;
\item  $lev(A)=5$ if and only if either $A$ can be represented by $T^3({\scriptstyle\epsilon_{34}^n})$ or $dim\,A=6$ and $A$ can be represented by $T^3({\scriptstyle\epsilon_{45}^{6}})$.
\end{itemize}
\end{lemma}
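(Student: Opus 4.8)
The plan is to follow the pattern of the proof of Lemma~\ref{T22lev}: first reduce $A$, using Lemmas~\ref{3_32}, \ref{T3rest1} and \ref{T3rest2}, to one of finitely many explicit structures (or, in the exceptional cases, to an algebra already seen to have level at least $6$); then record a handful of degenerations for the lower bounds on the levels; and finally pin the levels down by combining the monotonicity $A\to B\Rightarrow IW_1^{max}(A)\to IW_1^{max}(B)$ with the classifications of Table~2 and of Lemma~\ref{T22lev}, and with subspace‑invariant non‑degeneration arguments $\not\to_{(i,j,k),\dots}$.

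Since $IW_1^{max}(A)\cong T^3$ forces $e_r,e_n\in A^2$ with $r<n$, one has $\dim A^2\ge 2$, so exactly one of Lemmas~\ref{T3rest1} and \ref{T3rest2} applies. If $\dim A^2>2$ then $A\cong\eta_m(\epsilon_{1,2m+1}^{n-1}+\epsilon_{2,2m+1}^n)$; for $m=1$ this is nothing but $T^3(\epsilon_{23}^{n-1})$, while for $m\ge 2$ I would exhibit a non‑trivial degeneration of $A$ onto an algebra already known to have level $\ge 6$ (a parameterized basis rescaling all copies of $\eta$ past the first, in the spirit of Lemma~\ref{T2k2rest1}), so that $lev(A)\ge 6$. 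If $\dim A^2=2$ and $A$ falls under the first alternative of Lemma~\ref{T3rest2}, i.e.\ $A\to\eta_2(\epsilon_{15}^n)$, it is enough to check $lev\big(\eta_2(\epsilon_{15}^n)\big)\ge 6$; then $lev(A)\ge lev\big(\eta_2(\epsilon_{15}^n)\big)\ge 6$. In the only remaining case $A$ is represented by the special structure $\mu$ of Lemma~\ref{T3rest2}, whose non‑zero products are $e_1e_2=e_3$, $e_1e_3=e_n$ together with a family of products valued in $\langle e_n\rangle$; equivalently, $\mu$ is the datum of a skew‑symmetric form $\beta$ on $\langle e_2,\dots,e_{n-1}\rangle$ with $\beta(e_2,e_3)=0$.

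To treat this last case I would put $\beta$ in a normal form using the basis changes that preserve the shape (Gaussian reduction among $e_4,\dots,e_{n-1}$; adding a multiple of $e_1$ to some $e_j$, $j\ge4$, whenever this creates no $e_3$‑component; rescalings; and the $L_{e_2},L_{e_3}$ tricks of Lemma~\ref{T3rest2}). If $\beta=0$ then $A\cong T^3$ and $lev(A)=3$. If $\operatorname{rank}\beta=2$, then according to whether a hyperbolic pair for $\beta$ is forced to meet $\langle e_2\rangle$, forced to meet $\langle e_3\rangle$, or may be chosen inside $\langle e_4,\dots,e_{n-1}\rangle$, one obtains respectively $T^3(\epsilon_{24}^n)$, $T^3(\epsilon_{34}^n)$ or $T^3(\epsilon_{45}^n)$ (the last only for $n\ge6$), the remaining parameters being removed by explicit degenerations as in Lemma~\ref{T22rest}. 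If $\operatorname{rank}\beta\ge4$, an explicit parameterized basis degenerates $A$ onto one of the algebras shown to have level $\ge6$. Finally, since $T^3(\epsilon_{45}^n)\cong T^3(\epsilon_{45}^6)\oplus\kk^{\,n-6}$, only $n=6$ can possibly give level $\le5$; for $n\ge7$ I would show $lev\big(T^3(\epsilon_{45}^n)\big)\ge6$ by a non‑trivial degeneration onto an algebra of level $5$ that exists only in dimension $\ge7$ (such as $T^{2,2}(\epsilon_{45}^n)$, see Table~3 and Lemma~\ref{T22lev}), the unavailability of which for $n=6$ being precisely what makes $lev\big(T^3(\epsilon_{45}^6)\big)=5$; the same phenomenon gives $lev\big(\eta_2(\epsilon_{15}^n)\big)\ge6$.

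It remains to compute the exact levels. Lower bounds come from the chains $T^3(\epsilon_{23}^{n-1})\xrightarrow{\not\cong}T^3$, $T^3(\epsilon_{24}^n)\xrightarrow{\not\cong}T^3$, $T^3(\epsilon_{34}^n)\xrightarrow{\not\cong}T^3(\epsilon_{24}^n)$ and $T^3(\epsilon_{45}^6)\xrightarrow{\not\cong}T^3(\epsilon_{24}^6)\xrightarrow{\not\cong}T^3$, together with $lev(T^3)=3$. For the upper bounds: if one of $T^3(\epsilon_{23}^{n-1})$, $T^3(\epsilon_{24}^n)$ (resp.\ $T^3(\epsilon_{34}^n)$, $T^3(\epsilon_{45}^6)$) degenerated non‑trivially to an algebra $B$ of level $4$ (resp.\ $5$), then $IW_1^{max}(B)$ would be a degeneration of $T^3$, hence $IW_1^{max}(B)\in\{{\bf n}_3,T^{2,2},T^3\}$ — the only nilpotent maximal IW contractions $C$ with $\dim C^2\le2$ — so by Table~2, Lemma~\ref{T22lev} and the classification just obtained $B$ would be one of finitely many explicit algebras ($\eta_4$ or $\eta_5$; $T^{2,2}(\epsilon_{34}^n)$ or $T^{2,2}(\epsilon_{45}^n)$; or one of our $T^3$‑family algebras), and each candidate other than the algebra itself is ruled out by an invariant: besides $\dim A^2$ and $\dim\Ann$ (e.g.\ $T^3(\epsilon_{23}^{n-1})\not\to_{(1,4,n+1)}T^3(\epsilon_{24}^n)$, $T^3(\epsilon_{24}^n)\not\to_{(1,1,n-1)}T^3(\epsilon_{23}^{n-1})$, $T^3(\epsilon_{34}^n)\not\to_{(1,5,n+1)}T^{2,2}(\epsilon_{45}^n)$), the remaining cases are separated by the ``centralizer‑type'' invariants $\dim\{a\in A:aA^2=0\}$ and $\dim\{a\in A:aA\subseteq\Ann(A)\}$, both non‑decreasing under degeneration, which give for instance $T^3(\epsilon_{45}^n)\not\to T^3(\epsilon_{34}^n)$ and $T^3(\epsilon_{45}^6)\not\to\eta_2(\epsilon_{15}^6)$. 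I expect the main obstacles to be the bookkeeping of admissible basis changes in the $\beta$‑normal‑form step (especially the rank $\ge4$ sub‑case) and the construction of the dimension‑sensitive degenerations — in particular $T^3(\epsilon_{45}^n)\to T^{2,2}(\epsilon_{45}^n)$ and $T^3(\epsilon_{34}^n)\to T^3(\epsilon_{24}^n)$ — which is where the jump of $lev\big(T^3(\epsilon_{45}^n)\big)$ at $n=7$ originates; everything else follows the template of Lemma~\ref{T22lev}, with a possible need for an \ref{ex222}‑style lower‑triangular‑invariant closed subset of $\mathcal{AC}_n$ should some non‑degeneration resist the soft invariants.
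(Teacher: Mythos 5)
Your overall architecture coincides with the paper's: reduce via Lemmas \ref{T3rest1} and \ref{T3rest2}, show every residual algebra is isomorphic to one of $T^3$, $T^3({\scriptstyle\epsilon_{24}^{n}})$ or degenerates properly onto one of the level-$\ge 5$ structures $T^3({\scriptstyle\epsilon_{34}^{n}})$, $T^3({\scriptstyle\epsilon_{45}^{n}})$, get lower bounds from explicit chains through $T^3({\scriptstyle\epsilon_{24}^{n}})$, and get upper bounds from $IW_1^{max}$-monotonicity plus invariant closed subsets; your observation that $T^3({\scriptstyle\epsilon_{45}^{n}})\to T^{2,2}({\scriptstyle\epsilon_{45}^{n}})$ only for $n\ge 7$ is exactly the mechanism the paper uses to isolate the $n=6$ case. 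The reorganization of the $\dim A^2=2$ case by the rank of the form $\beta$ is harmless (the paper splits instead by which products $e_ie_j$, $i,j\ge 3$, are nonzero), with one caveat: for the subcase leading to $T^3({\scriptstyle\epsilon_{24}^{n}})$ you must produce an \emph{isomorphism}, not merely a degeneration, since a proper degeneration onto a level-$4$ algebra only yields $lev(A)\ge 5$ and would not exclude $A$ from the level-$5$ list; the paper's case (1) does give the isomorphism directly.

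The genuine gap is the non-degeneration $T^3({\scriptstyle\epsilon_{45}^{n}})\not\to T^3({\scriptstyle\epsilon_{34}^{n}})$, which is indispensable for $lev\big(T^3({\scriptstyle\epsilon_{45}^{6}})\big)\le 5$. Your two ``centralizer-type'' invariants do not separate these algebras. Indeed, for $T^3({\scriptstyle\epsilon_{34}^{n}})$ one has $aA^2=\la (a_1-a_4)e_n\ra$, so $\dim\{a: aA^2=0\}=n-1$, and for $T^3({\scriptstyle\epsilon_{45}^{n}})$ one has $aA^2=\la a_1e_n\ra$, again $n-1$; likewise $\{a: aA\subseteq\Ann(A)\}=\{a_1=a_2=0\}$ has dimension $n-2$ in both algebras (and $\dim\Ann$ goes the wrong way: $n-5\le n-4$). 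So the ``soft invariants'' you name all pass, and this is precisely the step you flagged as possibly needing an \ref{ex222}-style set: the paper uses the closed subset given by $(1,1,n-1),(1,3,n),(2,n-1,n+1)$, i.e.\ the simultaneous existence of a flag with $\lambda(V,V)\subseteq V_{n-1}$, $\lambda(V,V_3)\subseteq V_n$ and $\lambda(V_2,V_{n-1})=0$, which is a statement about the relative position of several subspaces rather than a single dimension count. A cheaper route you could substitute is the Lie/non-Lie dichotomy: $T^3({\scriptstyle\epsilon_{45}^{n}})$ satisfies the Jacobi identity while $T^3({\scriptstyle\epsilon_{34}^{n}})$ does not (e.g.\ $J(e_1,e_2,e_4)=e_n$), and the Jacobi identity cuts out a $GL(V)$-invariant closed subset — an argument the paper itself uses to prove $T^{3,2}({\scriptstyle\epsilon_{23}^{n}})\not\to T^3({\scriptstyle\epsilon_{34}^{n}})$ in Lemma \ref{T32lev}. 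With that one step repaired, your plan goes through.
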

\begin{proof} Note first that we have non-trivial degenerations $T^3({\scriptstyle\epsilon_{23}^{n-1}}),T^3({\scriptstyle\epsilon_{24}^{n}})\to T^3$, $T^3({\scriptstyle\epsilon_{34}^n})\xrightarrow{te_1,e_2+e_3,te_3+te_n,t^2e_4,e_5\dots,e_{n-1},t^2e_n} T^3({\scriptstyle\epsilon_{24}^{n}})$ and $T^3({\scriptstyle\epsilon_{45}^n})\xrightarrow{e_1,e_2-e_5,e_3,e_4,te_5,e_6\dots,e_n} T^3({\scriptstyle\epsilon_{24}^{n}})$, and hence $lev\big(T^3({\scriptstyle\epsilon_{23}^{n-1}})\big)\ge 4$, $lev\big(T^3({\scriptstyle\epsilon_{24}^{n}})\big)\ge 4$, $lev\big(T^3({\scriptstyle\epsilon_{34}^{n}})\big)\ge 5$ and $lev\big(T^3({\scriptstyle\epsilon_{45}^{n}})\big)\ge 5$.

Note that $\eta_m({\scriptstyle\epsilon_{1,2m+1}^{n-1}+\epsilon_{2,2m+1}^n})\to \eta_2({\scriptstyle\epsilon_{15}^{n-1}+\epsilon_{25}^n})\xrightarrow{e_1,\dots,e_{n-2},\frac{1}{t}e_n,e_{n-1}} \eta_2({\scriptstyle\epsilon_{15}^n})$ for $m\ge 2$ and
$\eta_2({\scriptstyle\epsilon_{15}^n})\xrightarrow{te_1,e_2-e_5,te_5-te_n,te_3,te_4,e_6,\dots,e_{n-1},t^2e_n}T^3({\scriptstyle\epsilon_{45}^n})$. Due to Lemmas \ref{T3rest1} and \ref{T3rest2}, if $A$ has level not greater than five, then it can be represented either by the structure $\eta_1({\scriptstyle\epsilon_{13}^{n-1}+\epsilon_{23}^n})\cong T^3({\scriptstyle\epsilon_{23}^{n-1}})$ or  by a structure $\mu$ such that $\mu(e_1,e_2)=e_3$, $\mu(e_1,e_3)=e_n$, $\mu(e_1,e_i)=0$ for $i\ge 4$, $\mu(e_2,e_3)=0$ and $\mu_{i,j}^k=0$ if $k<n$ and $max(i,j)\ge 3$.
It remains to consider the last case to prove that there are no algebras $A$ of levels four and five with $IW_1^{max}(A)\cong T^3$ except the algebras mentioned in the statement of this lemma. Let us consider three cases.
\begin{enumerate}
\item $\mu(e_i,e_j)=0$ for all $3\le i,j\le n-1$. In this case the multiplication table of $\mu$ is defined by the equalities $e_1e_2=e_3$, $e_1e_3=e_n$, $e_2e_i=\alpha_ie_n$ ($4\le i\le n-1$), where $\alpha_i$ are some elements of the field $\kk$. It is clear that if all $\alpha_i$ are zero, then $\mu\cong T^3$ and if minimum one of the elements $\alpha_i$ is not zero, then $\mu\cong T^3({\scriptstyle\epsilon_{24}^n})$.

\item $\mu(e_i,e_j)\not=0$ for some $4\le i<j\le n-1$. We may assume in this case that $\mu(e_i,e_j)=e_n$. We set $\kappa_s(t):=\begin{cases}
1,&\mbox{ if $s=1$},\\
t,&\mbox{ if $s\in\{i,j\}$},\\
t^2,&\mbox{ otherwise.}
\end{cases}$

Then we have the degeneration
$
\mu\xrightarrow{\kappa_1(t)e_1,\dots,\kappa_n(t)e_n}T^3({\scriptstyle\epsilon_{i,j}^{n}})
$, where $T^3({\scriptstyle\epsilon_{i,j}^{n}})$ is the algebra with the multiplication table $e_1e_2=e_3$, $e_1e_3=e_ie_j=e_n$. It is clear that $T^3({\scriptstyle\epsilon_{i,j}^{n}})\cong T^3({\scriptstyle\epsilon_{45}^{n}})$.

\item $\mu(e_3,e_i)\not=0$ for some $4\le i\le n-1$. We may assume in this case that $\mu(e_3,e_i)=e_n$.
Then we have the degeneration
$
\mu\xrightarrow{\frac{1}{t}e_1,t^2e_2,te_3,\frac{1}{t}e_i,t^2e_4\dots,t^2e_{i-1},t^2e_{i+1}\dots,t^2e_{n-1},e_n}T^3({\scriptstyle\epsilon_{34}^{n}})$.
\end{enumerate}
It remains to note that if $n\ge 7$, then $T^3({\scriptstyle\epsilon_{45}^{n}})\xrightarrow{e_1,te_2,e_3-\frac{1}{t}e_{n-1},e_4,\dots,e_n}T^{2,2}({\scriptstyle \epsilon_{45}^n})$.

It remains to show that $lev\big(T^3({\scriptstyle\epsilon_{23}^{n-1}})\big)\le 4$, $lev\big(T^3({\scriptstyle\epsilon_{24}^{n}})\big)\le 4$, $lev\big(T^3({\scriptstyle\epsilon_{34}^{n}})\big)\le 5$ and, in the case $n=6$, $lev\big(T^3({\scriptstyle\epsilon_{45}^{6}})\big)\le 5$.

Suppose that $A$ is represented by $T^3({\scriptstyle\epsilon_{23}^{n-1}})$ or $T^3({\scriptstyle\epsilon_{24}^{n}})$. If $lev(A)>4$, then $A$ degenerates to some algebra $B$  of level $4$ and we have $IW_1^{max}(B)\cong {\bf n}_3$, $IW_1^{max}(B)\cong T^{2,2}$ or $IW_1^{max}(B)\cong T^3$. Then $B$ can be represented by $\eta_4$, $T^{2,2}({\scriptstyle \epsilon_{34}^n})$, $T^3({\scriptstyle\epsilon_{23}^{n-1}})$ or $T^3({\scriptstyle\epsilon_{24}^{n}})$.
Note that $T^3({\scriptstyle\epsilon_{23}^{n-1}})\not\to_{(1,4,n+1)}\eta_4,T^{2,2}({\scriptstyle \epsilon_{34}^n}),T^3({\scriptstyle\epsilon_{24}^{n}})$; $T^3({\scriptstyle\epsilon_{24}^{n}})\not\to_{(1,8,n+1)}\eta_4$;
$T^3({\scriptstyle\epsilon_{24}^{n}})\not\to_{(1,3,n),(3,3,n+1)}T^{2,2}({\scriptstyle \epsilon_{34}^n})$ and $T^3({\scriptstyle\epsilon_{24}^{n}})\not\to_{(1,1,n-1)}T^3({\scriptstyle\epsilon_{23}^{n-1}})$. Thus, $lev\big(T^3({\scriptstyle\epsilon_{23}^{n-1}})\big)\le 4$ and $lev\big(T^3({\scriptstyle\epsilon_{24}^{n}})\big)\le 4$.

Suppose that either $A$ is represented by $T^3({\scriptstyle\epsilon_{34}^{n}})$ or $dim\,A=6$ and $A$ is represented by $T^3({\scriptstyle\epsilon_{45}^{n}})$. If $lev(A)>5$, then $A$ degenerates to some algebra $B$  of level $5$ and we again have $IW_1^{max}(B)\cong {\bf n}_3$, $IW_1^{max}(B)\cong T^{2,2}$ or $IW_1^{max}(B)\cong T^3$. Then $B$ can be represented by $\eta_5$, $T^{2,2}({\scriptstyle \epsilon_{45}^n})$, $T^3({\scriptstyle\epsilon_{34}^{n}})$ or $T^3({\scriptstyle\epsilon_{45}^{n}})$, where the case $T^{2,2}({\scriptstyle \epsilon_{45}^n})$ is possible only if $dim\,A\ge 7$.
Note that $T^3({\scriptstyle\epsilon_{34}^{n}})\not\to_{(1,5,n+1)}\eta_5,T^{2,2}({\scriptstyle \epsilon_{45}^n}),T^3({\scriptstyle\epsilon_{45}^{n}})$; $T^3({\scriptstyle\epsilon_{45}^{n}})\not\to_{(1,10,n+1)}\eta_5$
 and $T^3({\scriptstyle\epsilon_{45}^{n}})\not\to_{(1,1,n-1),(1,3,n),(2,n-1,n+1)}T^3({\scriptstyle\epsilon_{34}^{n}})$. Thus, $lev\big(T^3({\scriptstyle\epsilon_{34}^{n}})\big)\le 5$ and, in the case $n=6$, $lev\big(T^3({\scriptstyle\epsilon_{45}^{n}})\big)\le 5$.
\end{proof}

Let us now consider the case $IW_1^{max}(A)\cong T^{3,2}$. For this piece of our classification, we will need one more algebra structure.
\begin{center}
\begin{tabular}{|l|l|l|}
\hline
{\rm notation}&{\rm multiplication table}&{\rm dimension}\\
\hline
$\begin{array}{l}T^{3,2}({\scriptstyle\epsilon_{23}^{n}})\end{array}$&$\begin{array}{l}e_1e_2=e_{n-1},\,\,e_1e_3=e_4,\,\,e_1e_4=e_n,\,\,e_2e_3=e_n\end{array}$&$\begin{array}{l}n\ge 6\end{array}$\\
\hline
\end{tabular}
\end{center}

\begin{lemma}\label{T32lev} Suppose that $IW_1^{max}(A)\cong T^{3,2}$. Then $lev(A)=5$ if and only if $A$ can be represented by $T^{3,2}({\scriptstyle\epsilon_{23}^n})$.
\end{lemma}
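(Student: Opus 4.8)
The proof will follow the pattern of Lemmas~\ref{T22lev}, \ref{T222lev} and~\ref{T3lev}: a structural reduction giving the ``only if'' direction, then a two-sided estimate of $lev\big(T^{3,2}({\scriptstyle\epsilon_{23}^n})\big)$.

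\medskip
As usual I would represent $A$ by a structure $\mu$ with $\mu_{i,j}^k=0$ for $k\le\max(i,j)$ and $\mu(e_1,e_2)=e_{n-1}$, $\mu(e_1,e_3)=e_4$, $\mu(e_1,e_4)=e_n$, $\mu(e_1,e_s)=0$ for $s\not\in\{2,3,4\}$, so that $J=\{2,3,4\}$, $K=\{4,n-1,n\}$ in the notation preceding Lemma~\ref{3_32}; then $e_4,e_{n-1},e_n\in A^2$, whence $dim\,A^2\ge 3$. By Lemma~\ref{3_32} together with $\mu_{i,j}^k=0$ for $k\le\max(i,j)$, the only structure constants that can be nonzero besides the listed ones are $\mu_{2,3}^4$, the $\mu_{i,j}^{n-1}$ and $\mu_{i,j}^n$, and the $\mu_{i,j}^k$ with $(i,j)\in\{(2,3),(2,4),(3,4)\}$, $5\le k\le n-2$ (the last group being nonzero exactly when $dim\,A^2>3$). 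A case analysis entirely parallel to Lemmas~\ref{T2k2rest1}--\ref{T2k2rest3} and~\ref{T3rest1}--\ref{T3rest2} — triangular replacements $e_s\mapsto e_s+\alpha e_1$ to clean up $\mu_{2,s}^{n-1},\mu_{3,s}^{n},\dots$, followed by scalings $\mu\xrightarrow{\kappa_1(t)e_1,\dots,\kappa_n(t)e_n}\cdot$ isolating the residual extra product — will show that $A$ is isomorphic to $T^{3,2}$ or to $T^{3,2}({\scriptstyle\epsilon_{23}^n})$, or else $lev(A)\ge 6$: in each remaining case $A$ degenerates non-trivially to $T^{3,2}({\scriptstyle\epsilon_{23}^n})$ (which has level at least $5$ by the next paragraph), sometimes more conveniently to one of $T^3({\scriptstyle\epsilon_{34}^n})$, $T^{2,2}({\scriptstyle\epsilon_{45}^n})$, $\eta_2({\scriptstyle\epsilon_{15}^n})$ (whose levels are $\ge 5$ by Lemmas~\ref{T22lev},~\ref{T3lev}), and when $dim\,A^2>3$ the degeneration onto $T^{3,2}({\scriptstyle\epsilon_{23}^n})$ is non-trivial since then $dim\,A^2>3=dim\big(T^{3,2}({\scriptstyle\epsilon_{23}^n})\big)^2$. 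In particular $T^{3,2}({\scriptstyle\epsilon_{23}^n})$ is the unique algebra with $IW_1^{max}\cong T^{3,2}$ of level exactly $5$, which is the ``only if'' assertion.

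\medskip
The lower bound $lev\big(T^{3,2}({\scriptstyle\epsilon_{23}^n})\big)\ge 5$ is immediate: $IW_{e_1}\big(T^{3,2}({\scriptstyle\epsilon_{23}^n})\big)=T^{3,2}$ by inspection, and $T^{3,2}({\scriptstyle\epsilon_{23}^n})\not\cong T^{3,2}$ because $T^{3,2}$ has a codimension-one abelian ideal $\langle e_2,\dots,e_n\rangle$ whereas $T^{3,2}({\scriptstyle\epsilon_{23}^n})$ has none — a codimension-one subspace $I$ with $I^2=0$ must contain $A^2=\langle e_4,e_{n-1},e_n\rangle$ and hence, since $e_1e_4=e_n\ne 0$, cannot contain $e_1$, so $A=\langle e_1\rangle\oplus I$; writing $e_2,e_3$ in this decomposition and computing $e_2e_3$ via $e_1e_2=e_{n-1}$, $e_1e_3=e_4$, $e_2e_3=e_n$ contradicts the linear independence of $e_4,e_{n-1},e_n$. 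Since $lev(T^{3,2})=4$, this gives $T^{3,2}({\scriptstyle\epsilon_{23}^n})\xrightarrow{\not\cong}T^{3,2}$ and $lev\big(T^{3,2}({\scriptstyle\epsilon_{23}^n})\big)\ge 5$.

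\medskip
For the upper bound, assume $lev\big(T^{3,2}({\scriptstyle\epsilon_{23}^n})\big)>5$, so that $T^{3,2}({\scriptstyle\epsilon_{23}^n})$ degenerates non-trivially to some $B$ with $lev(B)=5$. Since $A\to B$ implies $IW_1^{max}(A)\to IW_1^{max}(B)$, and $T^{3,2}$ degenerates only to algebras with $IW_1^{max}$ among ${\bf n}_3,T^{2,2},T^{2,2,2},T^3,T^{3,2}$, we get $IW_1^{max}(B)$ in that list; moreover $dim\,B^2\le 3$ and $dim\,\Ann(B)\ge dim\,\Ann\big(T^{3,2}({\scriptstyle\epsilon_{23}^n})\big)=n-4$. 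By the first paragraph and Lemmas~\ref{T22lev}, \ref{T222lev}, \ref{T3lev} together with Table~2, the only level-$5$ algebras with such $IW_1^{max}$ are $\eta_5$, $T^{2,2}({\scriptstyle\epsilon_{45}^n})$, $T^3({\scriptstyle\epsilon_{34}^n})$, $T^3({\scriptstyle\epsilon_{45}^6})$, $T^{2,2,2}({\scriptstyle\epsilon_{24}^n})$, $T^{2,2,2}({\scriptstyle\epsilon_{23}^4-\epsilon_{26}^7+\epsilon_{35}^7})$ and $T^{3,2}({\scriptstyle\epsilon_{23}^n})$; of these, $\eta_5$, $T^{2,2}({\scriptstyle\epsilon_{45}^n})$, $T^3({\scriptstyle\epsilon_{45}^6})$ and $T^{2,2,2}({\scriptstyle\epsilon_{23}^4-\epsilon_{26}^7+\epsilon_{35}^7})$ all have $dim\,\Ann<n-4$, hence are excluded by $T^{3,2}({\scriptstyle\epsilon_{23}^n})\not\to_{(1,5,n+1)}B$, and $B\cong T^{3,2}({\scriptstyle\epsilon_{23}^n})$ is excluded as a trivial degeneration. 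Thus everything reduces to the two non-degenerations
$$
T^{3,2}({\scriptstyle\epsilon_{23}^n})\not\to T^3({\scriptstyle\epsilon_{34}^n}),\qquad T^{3,2}({\scriptstyle\epsilon_{23}^n})\not\to T^{2,2,2}({\scriptstyle\epsilon_{24}^n}),
$$
and this is the step I expect to be the real obstacle: both target algebras share with $T^{3,2}({\scriptstyle\epsilon_{23}^n})$ all the numerical invariants $dim\,A^i$ and $dim\,\Ann$ (and the $\Ann$/$A^2$ invariants even move the wrong way), so no invariant of the usual ``flag'' shape separates them. As with the analogous claim in Lemma~\ref{ex222}, I see no short argument and would instead, for each of the two, construct an explicit closed subset $\mathcal{R}\subset\mathcal{AC}_n$ invariant under lower-triangular transformations, given by a list of conditions $\lambda(V_{i_1},V_{j_1})\subset V_{k_1},\dots$ together with several polynomial relations among the structure constants $\lambda_{i,j}^{n-1},\lambda_{i,j}^{n}$ coming from the identities $L_{e_1}L_{e_i}+L_{e_i}L_{e_1}=0$, arrange that a suitable representative of $T^{3,2}({\scriptstyle\epsilon_{23}^n})$ lies in $\mathcal{R}$, and verify by the same kind of basis bookkeeping as in the proof of Lemma~\ref{ex222} that $O\big(T^3({\scriptstyle\epsilon_{34}^n})\big)\cap\mathcal{R}=\varnothing$, respectively $O\big(T^{2,2,2}({\scriptstyle\epsilon_{24}^n})\big)\cap\mathcal{R}=\varnothing$. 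Granting these, $lev\big(T^{3,2}({\scriptstyle\epsilon_{23}^n})\big)=5$, which with the first paragraph proves the lemma.
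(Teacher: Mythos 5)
Your skeleton coincides with the paper's (structural reduction to $T^{3,2}({\scriptstyle\epsilon_{23}^n})$, then the lower bound via $T^{3,2}({\scriptstyle\epsilon_{23}^n})\xrightarrow{\not\cong}T^{3,2}$, then exclusion of the level-five candidates $B$ using $IW_1^{max}(A)\to IW_1^{max}(B)$ and the annihilator condition $(1,5,n+1)$), and your second paragraph proving $T^{3,2}({\scriptstyle\epsilon_{23}^n})\not\cong T^{3,2}$ is correct. But as written the proposal has two genuine gaps. The first is the reduction itself: ``a case analysis entirely parallel to Lemmas~\ref{T2k2rest1}--\ref{T2k2rest3}'' is not a proof, and it is not in fact parallel. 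The paper's argument occupies the bulk of the proof and leans essentially on the Engel identities $(L_{e_1+\alpha e_i})^3(e_{i_1})=0$ and $(L_{e_1+\alpha e_i})^3(e_{j_1})=0$ to force, e.g., $\mu(e_i,e_{i_3})=\mu_{i,i_2}^{j_1}e_{j_2}$ and $\mu_{i,j_2}^{i_2}=0$ --- constraints Lemma~\ref{3_32} alone does not give --- and then splits into several cases ($\mu_{i_1,i_2}^{j_1}\not=0$, some $\mu_{i,j}^{r}\not=0$, $\dim A^2>3$, and the residual case) each requiring its own two-step parameterized-basis degeneration onto $T^{3,2}({\scriptstyle\epsilon_{23}^n})$. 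None of this is supplied by your list of possibly nonzero structure constants.

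The second gap is the pair of non-degenerations $T^{3,2}({\scriptstyle\epsilon_{23}^n})\not\to T^3({\scriptstyle\epsilon_{34}^n}),\,T^{2,2,2}({\scriptstyle\epsilon_{24}^n})$, which you correctly isolate as the crux but then misjudge and leave undone. Contrary to your assessment, no Lemma~\ref{ex222}-style computation is needed. For the first, $T^{3,2}({\scriptstyle\epsilon_{23}^n})$ satisfies the Jacobi identity (check the triples $(e_1,e_2,e_3)$, $(e_1,e_2,e_4)$, $(e_1,e_3,e_4)$; all others vanish trivially), whereas $T^3({\scriptstyle\epsilon_{34}^n})$ does not, since $(e_1e_2)e_4+(e_2e_4)e_1+(e_4e_1)e_2=e_3e_4=e_n\not=0$; the Jacobi identity cuts out a closed $GL(V)$-invariant subset of $\mathcal{AC}_n$, so a Lie algebra cannot degenerate to a non-Lie one. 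For the second, your claim that ``no invariant of the usual flag shape separates them'' is false: the paper uses exactly $T^{3,2}({\scriptstyle\epsilon_{23}^n})\not\to_{(1,4,n),(2,4,n+1),(2,3,n)}T^{2,2,2}({\scriptstyle\epsilon_{24}^n})$, i.e.\ the closed set $\{\lambda\mid\lambda(V,V_4)\subset V_n,\ \lambda(V_2,V_4)=0,\ \lambda(V_2,V_3)\subset V_n\}$, which contains the standard representative of $T^{3,2}({\scriptstyle\epsilon_{23}^n})$ but meets no representative of $T^{2,2,2}({\scriptstyle\epsilon_{24}^n})$. Until these two steps and the reduction are actually carried out, the argument is an outline rather than a proof.
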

\begin{proof} We want to show first that if $IW_1^{max}(A)\cong T^{3,2}$ and $A\not\cong T^{3,2}$, then $A\to T^{3,2}({\scriptstyle\epsilon_{23}^n})$.
We represent the algebra $A$ by a structure $\mu$ such that $\mu_{i,j}^k=0$ for $k\le max(i,j)$ and there are five different integers $2\le i_1<i_2<i_3\le n$ and $2\le j_1<j_2\le n$ such that $\mu(e_1,e_{i_1})=e_{i_2}$, $\mu(e_1,e_{i_2})=e_{i_3}$, $\mu(e_1,e_{j_1})=e_{j_2}$ and $\mu(e_1,e_s)=0$ for $s\not\in\{i_1,i_2,j_1\}$. Due to Lemma \ref{3_32}, we also have $\mu_{i,j}^k=0$ if $k\not\in \{i_2,i_3,j_2\}$ and $j\not\in\{i_1,i_2,j_1\}$. In particular, $\mu_{i,j}^{i_1}=0$ for any $1\le i,j\le n$.

Note first that, for $2\le i\le n$ and any $\alpha\in\kk$, one has
$$0=(L_{e_1+\alpha e_{i}})^3(e_{i_1})=\alpha\big(\mu(e_i,e_{i_3})+L_{e_1}\mu(e_i,e_{i_2})+(L_{e_1})^2\mu(e_{i},e_{i_1})\big)+\alpha^2v,$$
where $v\in V$ does not depend on $\alpha$. Note that $\mu_{i,i_1}^{i_1}=0$, and hence $(L_{e_1})^2\mu(e_{i},e_{i_1})=0$. Since $\mu_{i,i_2}^{i_1}=\mu_{i,i_2}^{i_2}=0$, we get
$\mu(e_{i_3},e_i)=\mu_{i,i_2}^{j_1}e_{j_2}$, in particular, $\mu(e_i,e_{i_3})=0$ for $i\not=i_1$.

If $\mu_{i_1,i_2}^{j_1}\not=0$, then it is enough to prove that $\chi\to T^{3,2}({\scriptstyle\epsilon_{23}^n})$ for the structure $\chi$ defined by the degeneration $\mu\xrightarrow{\kappa_1(t)e_1,\dots,\kappa_n(t)e_n}\chi$, where $\kappa_s(t):=\begin{cases}
1,&\mbox{ if $s=1$},\\
t^2,&\mbox{ if $s\in\{i_1,i_2,i_3\}$},\\
t^4,&\mbox{ if $s\in\{j_1,j_2\}$},\\
t^3,&\mbox{ otherwise.}
\end{cases}$

The algebra $\chi$ has multiplication table
$$e_1e_{i_1}=e_{i_2},\,\,e_1e_{i_2}=e_{i_3},\,\,e_1e_{j_1}=e_{j_2},\,\,e_{i_1}e_{i_2}=\mu_{i_1,i_2}^{j_1}e_{j_1}+\mu_{i_1,i_2}^{j_2}e_{j_2},\,\,e_{i_1}e_{i_3}=-\mu_{i_1,i_2}^{j_1}e_{j_2}.$$
Replacing $\chi$ by an isomorphic structure, we may assume that $i_1=2$, $i_2=3$, $i_3=n-1$, $j_1=4$, $j_2=n$ and $\mu_{i_1,i_2}^{j_1}=1$, $\mu_{i_1,i_2}^{j_2}=0$. Then we have
$$\chi\xrightarrow{e_1,te_2,e_3-\frac{1}{t}e_4+\frac{1}{t^2}e_n,e_4-\frac{1}{t}e_n,e_5,\dots,e_{n-2},e_{n-1}-\frac{1}{t}e_n, e_n}T^{3,2}({\scriptstyle\epsilon_{23}^n}).$$
Note also that, for $2\le i\le n$ and any $\alpha\in\kk$, one has
$$0=(L_{e_1+\alpha e_i})^3(e_{j_1})=\alpha\big((L_{e_1})^2\mu(e_i,e_{j_1})+L_{e_1}\mu(e_i,e_{j_2})\big)+\alpha^2v,$$
where $v\in V$ does not depend on $\alpha$. It follows from $\mu_{i,j_1}^{i_1}=0$ that $\mu_{i,j_2}^{i_2}=0$.

Now we may assume that $\mu_{i_1,i_2}^{j_1}=0$, and hence $\mu(e_{i_1},e_{i_3})=0$ by the argument above. Then without loss of generality we may assume that $i_1=3$, $i_3=n$, $j_1=2$ and $j_2=n-1$. From here on we denote also $i_2$ by $r$.
Replacing $e_i$ by $e_i+\mu_{3,i}^re_1$ for $2\le i\le r-1$, we may assume that $\mu_{3,i}^r=0$ for all $2\le i\le n$. Suppose that there are $2\le i<j\le r-1$ such that $\mu_{i,j}^r\not=0$. If $\mu_{2,j}^r=0$, the we can replace $e_2$ by $e_2+e_i$ and assume that $\mu_{2,j}^r\not=0$ for some $4\le j\le r-1$.

It is enough to prove that $\chi\to T^{3,2}({\scriptstyle\epsilon_{23}^n})$ for the structure $\chi$ defined by the degeneration $\mu\xrightarrow{\kappa_1(t)e_1,\dots,\kappa_n(t)e_n}\chi$, where $\kappa_s(t):=\begin{cases}
1,&\mbox{ if $s=1$},\\
t^2,&\mbox{ if $s\in\{2,j,n-1\}$},\\
t^4,&\mbox{ if $s\in\{3,r,n\}$},\\
t^3,&\mbox{ otherwise.}
\end{cases}$

The algebra $\chi$ has multiplication table
\begin{multline*}e_1e_2=e_{n-1},\,\,e_1e_3=e_r,\,\,e_1e_r=e_n,\,\,e_2e_j=\mu_{2,j}^re_r+\mu_{2,j}^ne_n,\\e_2e_{n-1}=\mu_{2,n-1}^ne_n,\,\,e_je_{n-1}=\mu_{j,n-1}^ne_n.
\end{multline*}
Replacing $\chi$ by an isomorphic structure, we may assume that $j=4$, $r=5$, $\mu_{2,j}^r=1$ and $\mu_{2,j}^n=0$. Then we have
$$\chi\xrightarrow{e_1,te_2,e_3+e_4-\frac{1}{t}e_5,e_5-\frac{1}{t}e_n,te_4,e_6,\dots,e_{n-2},te_{n-1}, e_n}T^{3,2}({\scriptstyle\epsilon_{23}^n}).$$

Hence, we may assume that $\mu_{i,j}^r=0$ for $1\le i<j\le n$, $(i,j)\not=(1,3)$. Then we may assume also that $r=4$. Suppose now that $dim\,A^2>3$. Then there is some $5\le i\le n-3$ such that one of the elements $\mu_{2,3}^i$, $\mu_{2,4}^i$ and $\mu_{3,4}^i$ is nonzero. Since we can replace $e_3$ and $e_4$ by $e_3+e_4$ and $e_4+e_n$ or $e_2$ and $e_{n-1}$ by $e_2+e_4$ and $e_{n-1}+e_n$, we may assume that $\mu_{23}^i\not=0$.

It is enough to prove that $\chi\to T^{3,2}({\scriptstyle\epsilon_{23}^n})$ for the structure $\chi$ defined by the degeneration $\mu\xrightarrow{\kappa_1(t)e_1,\dots,\kappa_n(t)e_n}\chi$, where $\kappa_s(t):=\begin{cases}
t,&\mbox{ if $s=1$},\\
t^2,&\mbox{ if $s\in\{2,3\}$},\\
t^4,&\mbox{ if $s\in\{i,n\}$},\\
t^3,&\mbox{ otherwise.}
\end{cases}$

The algebra $\chi$ has multiplication table
$e_1e_2=e_{n-1},\,\,e_1e_3=e_4,\,\,e_1e_4=e_n,\,\,e_2e_3=\mu_{23}^ie_i+\mu_{23}^ne_n.$
Replacing $\chi$ by an isomorphic structure, we may assume that $\mu_{23}^i=\mu_{23}^n=1$. Then we have
$$\chi\xrightarrow{e_1,\dots,e_{i-1},\frac{1}{t}e_i,e_{i+1},\dots, e_n}T^{3,2}({\scriptstyle\epsilon_{23}^n}).$$

From here on we may assume that $\mu(V,V)=\{e_4,e_{n-1},e_n\}$.
Note that if $\mu_{23}^n\not=0$, then
$$\chi\xrightarrow{te_1,t^2e_2,t^2e_3,t^3e_4,\dots,t^3e_{n-1}, \mu_{23}^nt^4e_n}T^{3,2}({\scriptstyle\epsilon_{23}^n}).$$
If $\mu_{23}^{n-1}+\mu_{34}^n\not=0$ or $\mu_{34}^{n-1}\not=0$, then for some $\alpha\in\kk$ one has $\mu(e_2+\alpha e_4,e_3)\not\in\langle e _{n-1}+\alpha e_n\rangle$, i.e. we can replace $e_2$ and $e_{n-1}$ by $e_2+\alpha e_4$ and $e _{n-1}+\alpha e_n$ that returns us to the case $\mu_{23}^n\not=0$. Suppose that $\mu_{23}^n=\mu_{23}^{n-1}+\mu_{34}^n=\mu_{34}^{n-1}=0$. Replacung $e_3$ by $e_3-\mu_{34}^ne_1$, we may assume that $\mu(e_2,e_3)=\mu(e_3,e_4)=0$.
If $\mu(e_2,e_4)\not=0$, then replacing $e_3$ and $e_4$ by $e_3+e_4$ and $e_4+e_n$, we may assume that $\mu(e_2,e_3)\not=0$ and $\mu(e_3,e_4)=0$ that comes down to the case $\mu_{23}^n\not=0$ as was explained above.
Analogously, if $\mu(e_3,e_i)$ or $\mu(e_4,e_i)$ is nonzero for some $i\ge 5$, then adding $e_i$ to $e_2$ we can reduce everything to the case $\mu_{23}^n\not=0$. If $\mu(e_3,e_i)=\mu(e_4,e_i)=0$ and $\mu(e_2,e_i)$ is nonzero for some $i\ge 5$, then we are done by the replacement of $e_3$ by $e_3+e_i$. Finally, if $\mu(e_i,e_j)\not=0$ for two integers $i,j\ge 5$, then we can add $e_j$ to $e_2$ and return to the case $\mu(e_2,e_i)\not=0$. If $\mu(e_i,e_j)=0$ for all $2\le i,j\le n$, then $A$ is represented by $T^{3,2}$ that contradicts our assumptions.

Note that $T^{3,2}({\scriptstyle\epsilon_{23}^n})\xrightarrow{\not\cong}T^{3,2}$, and hence $lev\big(T^{3,2}({\scriptstyle\epsilon_{23}^n}))\big)\ge 5$ and algebra $A$ with $IW_1^{max}(A)\cong T^{3,2}$ can have level five only if $A$ is represented by $T^{3,2}({\scriptstyle\epsilon_{23}^n})$. If $lev\big(T^{3,2}({\scriptstyle\epsilon_{23}^n}))\big)> 5$, then $T^{3,2}({\scriptstyle\epsilon_{23}^n})$ degenerates to some algebra $B$ of level five. Since $IW_1^{max}(A)\to IW_1^{max}(B)$, we have $IW_1^{max}(B)\cong {\bf n}_3$, $IW_1^{max}(B)\cong T^{2,2}$, $IW_1^{max}(B)\cong T^3$, $IW_1^{max}(B)\cong T^{2,2,2}$ or $IW_1^{max}(B)\cong T^{3,2}$. Then $B$ can be represented by $\eta_5$, $T^{2,2}({\scriptstyle\epsilon_{45}^{n}})$, $T^3({\scriptstyle\epsilon_{34}^n})$, $T^3({\scriptstyle\epsilon_{45}^{n}})$, $T^{2,2,2}({\scriptstyle\epsilon_{24}^n})$, $T^{2,2,2}({\scriptstyle\epsilon_{23}^4-\epsilon_{26}^7+\epsilon_{35}^7})$ or $T^{3,2}({\scriptstyle\epsilon_{23}^n})$.
In the last case $B\cong A$ by our arguments.
We have also $$T^{3,2}({\scriptstyle\epsilon_{23}^n})\not\to_{(1,5,n+1)} \eta_5, T^{2,2}({\scriptstyle\epsilon_{45}^{n}}), T^3({\scriptstyle\epsilon_{45}^{n}}), T^{2,2,2}({\scriptstyle\epsilon_{23}^4-\epsilon_{26}^7+\epsilon_{35}^7});$$ 
$T^{3,2}({\scriptstyle\epsilon_{23}^n})\not\to T^3({\scriptstyle\epsilon_{34}^n})$ because $T^{3,2}({\scriptstyle\epsilon_{23}^n})$ is a Lie algebra and $T^3({\scriptstyle\epsilon_{34}^n})$ is not, and finally $T^{3,2}({\scriptstyle\epsilon_{23}^n})\not\to_{(1,4,n),(2,4,n+1),(2,3,n)} T^{2,2,2}({\scriptstyle\epsilon_{24}^n})$.
\end{proof}

\section{Algebras with maximal IW contractions of non-stable level}

In this section we consider algebras $A$ such that $lev\big(IW_1^{max}(A)\big)<lev_\infty\big(IW_1^{max}(A)\big)$. Due to Table 1, this occurs when either $IW_1^{max}(A)\cong T^3$ and $dim\,A=4$ or $IW_1^{max}(A)\cong T^4$ and $dim\,A=5$.

\begin{lemma}\label{T3d4} If $IW_1^{max}(A)\cong T^3$ and $dim\,A=4$, then $A$ can be represented by $T^3$.
\end{lemma}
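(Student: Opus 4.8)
The plan is to exploit the rigidity forced by the low dimension, so that essentially no computation is needed. As observed in Section~4, since $IW_1^{max}(A)\cong T^3$ the algebra $A$ is $3$-Engel, hence nilpotent by Remark~\ref{nileng}. Therefore I would represent $A$ by a structure $\mu$ with $\mu_{i,j}^k=0$ whenever $k\le\max(i,j)$ and with $IW_{e_1}(\mu)\cong IW_1^{max}(\mu)\cong T^3$ at the same time. By the general normalization recalled at the beginning of Section~7, there are integers $2\le i_1<i_2<i_3\le 4$ with $\mu(e_1,e_{i_1})=e_{i_2}$, $\mu(e_1,e_{i_2})=e_{i_3}$ and $\mu(e_1,e_s)=0$ for $s\notin\{i_1,i_2\}$; since $n=4$ this forces $i_1=2$, $i_2=3$, $i_3=4$, i.e. $\mu(e_1,e_2)=e_3$, $\mu(e_1,e_3)=e_4$ and $\mu(e_1,e_4)=0$.

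Next I would pin down the remaining structure constants. The only products not yet fixed are $\mu(e_2,e_3)$, $\mu(e_2,e_4)$ and $\mu(e_3,e_4)$. Because $\mu_{i,j}^k=0$ for $k\le\max(i,j)$ and $n=4$, there is no admissible index $k$ for $\mu(e_2,e_4)$ or $\mu(e_3,e_4)$, so both vanish, while $\mu(e_2,e_3)=\alpha e_4$ for some $\alpha\in\kk$ (this is also consistent with Lemma~\ref{3_32}, which permits a nonzero $\mu_{2,3}^4$ since $4\in K$). Thus $\mu$ is completely determined by the single scalar $\alpha$.

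Finally, if $\alpha=0$ then $\mu$ is literally the multiplication table of $T^3$ and we are done. If $\alpha\ne0$, I would pass to the basis $f_1=e_1$, $f_2=e_2-\alpha e_1$, $f_3=e_3$, $f_4=e_4$; then $f_1f_2=e_1e_2=f_3$, $f_1f_3=e_1e_3=f_4$, $f_2f_3=e_2e_3-\alpha e_1e_3=\alpha e_4-\alpha e_4=0$, and all other products of the $f_i$ vanish, so $\mu$ is represented by $T^3$ in this basis and $A\cong T^3$.

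There is no substantial obstacle here: the entire content of the statement is that in dimension $4$ the shape of $IW_1^{max}(A)$ together with the nilpotent normalization leaves exactly one free parameter, which is then killed by an elementary change of basis. The only points requiring care are verifying that the indices $i_1,i_2,i_3$ are forced to equal $2,3,4$ and that the enumeration of the remaining possibly-nonzero structure constants is exhaustive.
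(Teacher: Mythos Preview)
Your proof is correct and follows essentially the same approach as the paper's own proof: both use the nilpotent normalization $\mu_{i,j}^k=0$ for $k\le\max(i,j)$ together with $IW_{e_1}(\mu)=T^3$, observe that the only free structure constant is $\mu_{23}^4$, and then kill it by the basis change $e_2\mapsto e_2-\mu_{23}^4e_1$. You simply spell out a few details (forcing $i_1,i_2,i_3=2,3,4$ and verifying the basis change) that the paper leaves implicit.
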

\begin{proof} Since in the considered case $A$ is nilpotent, we may assume that $A$ is represented by a structure $\mu$ such that $\mu(e_1,e_2)=e_3$, $\mu(e_1,e_3)=e_4$ and $\mu_{i,j}^k=0$ if $k\le max(i,j)$. Then the only nonzero product except ones  we have already mentioned is $\mu(e_2,e_3)=\mu_{23}^4e_4$. Then in the basis $e_1,e_2-\mu_{23}^4e_1,e_3,e_4$ the structure $\mu$ has the same structure constants as $T^3$, i.e. $A$ can be represented by $T^3$.
\end{proof}

It remains to study the case of five-dimensional algebra $A$ with $IW_1^{max}(A)\cong T^4$.
The main difficulty of this case is that we do not have nilpotence of the algebra $A$ automatically.
Let us now introduce the five-dimensional algebra $T^4({\scriptstyle\epsilon_{23}^5})$ with $IW_1^{max}\big(T({\scriptstyle\epsilon_{23}^5})\big)\cong T^4$.

\begin{center}
\begin{tabular}{|l|l|l|}
\hline
{\rm notation}&{\rm multiplication table}&{\rm dimension}\\
\hline
$\begin{array}{l}T^4({\scriptstyle\epsilon_{23}^5})\end{array}$&$\begin{array}{l}e_1e_i=e_{i+1},\,\,i=2,3,4,\,\,e_2e_3=e_5\end{array}$&$\begin{array}{l}n=5\end{array}$\\
\hline
\end{tabular}
\end{center}

\begin{lemma}\label{T4lev} Suppose that $IW_1^{max}(A)\cong T^4$ and $dim\,A=5$. Then $A$ has level five if and only if it can be represented by $T^4({\scriptstyle\epsilon_{23}^5})$.
\end{lemma}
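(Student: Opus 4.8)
The plan is to first pin down all five-dimensional algebras $A$ with $IW_1^{max}(A)\cong T^4$, then to extract from this list those of level exactly five. Unlike the previous sections, $A$ need not be nilpotent, so the first step is to analyze the possible multiplication tables directly. Represent $A$ by a structure $\mu$ with $IW_{e_1}(\mu)\cong T^4$; by Table 1 this means, after a permutation of $e_2,\dots,e_5$, that the operator induced by $L_{e_1}$ on $V/\langle e_1\rangle$ is a single nilpotent Jordan block, so we may take $\mu(e_1,e_2)=e_3$, $\mu(e_1,e_3)=e_4$, $\mu(e_1,e_4)=e_5$ and $\mu(e_1,e_5)\in\langle e_1\rangle$; since $A$ is Engel, $\mu(e_1,e_5)=0$. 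The remaining freedom is in the products $\mu(e_i,e_j)$ for $2\le i<j\le 5$. The Engel condition applied to elements $e_1+\alpha e_i$ (exactly as in the proofs of Lemmas \ref{2k2} and \ref{3_32}: the rank of $L_{e_1+\alpha e_i}$ may not exceed $4$ for any $\alpha$, and $(L_{e_1+\alpha e_i})^5=0$) forces strong constraints; in particular one gets that $\mu(e_i,e_j)\in\langle e_4,e_5\rangle$ for all $i,j\ge 2$ and compatibility relations between the coefficients, analogous to part (2) of Lemma \ref{2k2}. A short case analysis on the rank of the resulting skew form on $\langle e_2,e_3,e_4,e_5\rangle$ with values in $\langle e_4,e_5\rangle$, together with normalizing changes of basis fixing $e_1$ (and replacements $e_i\mapsto e_i+\beta e_1$, $e_i\mapsto e_i+\gamma e_j$), should reduce $\mu$ to one of: $T^4$ itself, $T^4({\scriptstyle\epsilon_{23}^5})$, or one or two further structures that turn out to have higher level (they will degenerate nontrivially to $T^4({\scriptstyle\epsilon_{23}^5})$, forcing $lev\ge 6$). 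I expect this classification, and especially excluding non-nilpotent possibilities, to be the main obstacle, since here the standard nilpotent normal form is not available a priori.

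Next I would establish $lev\big(T^4({\scriptstyle\epsilon_{23}^5})\big)\ge 5$. Since $IW_{e_1}\big(T^4({\scriptstyle\epsilon_{23}^5})\big)=T^4$ and $T^4$ has level $4$ in dimension $5$ (Table 1), while $T^4({\scriptstyle\epsilon_{23}^5})\not\cong T^4$ (e.g. $\dim A^2$ differs), the degeneration $T^4({\scriptstyle\epsilon_{23}^5})\xrightarrow{\not\cong} T^4$ gives $lev\big(T^4({\scriptstyle\epsilon_{23}^5})\big)\ge 1+lev(T^4)=5$.

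Finally I would show $lev\big(T^4({\scriptstyle\epsilon_{23}^5})\big)\le 5$, following the template used in Lemmas \ref{T22lev}, \ref{T222lev}, \ref{T3lev}, \ref{T32lev}. If $lev\big(T^4({\scriptstyle\epsilon_{23}^5})\big)>5$ then $T^4({\scriptstyle\epsilon_{23}^5})$ degenerates to some five-dimensional algebra $B$ of level exactly $5$. Since $A\to B$ implies $IW_1^{max}(A)\to IW_1^{max}(B)$, and the only degenerations of $T^4$ among five-dimensional algebras of generation type one appearing in Tables 1--2 lead to $IW_1^{max}(B)\in\{T^4,\,{\bf n}_3\}$, $B$ is (by the classification step above and by Table 2) isomorphic to $T^4({\scriptstyle\epsilon_{23}^5})$ itself or to some $\eta_m$; in dimension $5$ the only relevant Heisenberg algebra of level $5$ would be $\eta_5$, which requires $n\ge 11$, so this case is vacuous, and likewise any level-$5$ algebra with $IW_1^{max}\cong T^4$ must be $T^4({\scriptstyle\epsilon_{23}^5})$ by the first step. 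One also rules out $B\cong\eta_2$ or lower-level Heisenberg algebras directly because they have smaller level. Hence $B\cong T^4({\scriptstyle\epsilon_{23}^5})\cong A$, contradicting non-triviality of the degeneration. Therefore $lev\big(T^4({\scriptstyle\epsilon_{23}^5})\big)=5$, and combined with the classification this proves that a five-dimensional $A$ with $IW_1^{max}(A)\cong T^4$ has level five if and only if $A\cong T^4({\scriptstyle\epsilon_{23}^5})$.
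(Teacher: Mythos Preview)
Your last step has a genuine gap. When you assume $lev\big(T^4({\scriptstyle\epsilon_{23}^5})\big)>5$ and pick a five-dimensional $B$ of level $5$ with $T^4({\scriptstyle\epsilon_{23}^5})\to B$, you list only $IW_1^{max}(B)\in\{T^4,{\bf n}_3\}$. But $T^4$ also degenerates to $T^3$ and to $T^{2,2}$, so you must allow $IW_1^{max}(B)\cong T^3$ or $T^{2,2}$ as well. The $T^{2,2}$ branch is harmless in dimension $5$ (Lemma \ref{T22lev} gives no five-dimensional algebra of level $5$ there), but the $T^3$ branch is not: by Lemma \ref{T3lev} the algebra $T^3({\scriptstyle\epsilon_{34}^5})$ is a five-dimensional algebra of level $5$ with $IW_1^{max}\cong T^3$, and you have to rule out $T^4({\scriptstyle\epsilon_{23}^5})\to T^3({\scriptstyle\epsilon_{34}^5})$. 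The paper does this with an explicit $\mathcal R$-type argument, recorded as $T^4({\scriptstyle\epsilon_{23}^5})\not\to_{(1,5,6),(1,4,5),(1,1,3),(2,4,6),(2,3,5)} T^3({\scriptstyle\epsilon_{34}^5})$. Without this, your upper bound on the level is incomplete.

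Your first step is also shakier than it looks. The analogy with Lemmas \ref{2k2} and \ref{3_32} does not bite here: those lemmas exploit that the rank of $L_{e_1}$ is strictly below the maximal possible value in the quotient, while for $T^4$ in dimension $5$ the induced operator on $A/\langle e_1\rangle$ is already a full Jordan block, so the rank of $L_{e_1+\alpha e_i}$ being at most $3$ (equivalently at most $4$ on $V$) is automatic and yields no constraint. In particular, your claimed containment $\mu(e_i,e_j)\in\langle e_4,e_5\rangle$ is not justified by a rank argument; the paper never proves or uses such a containment. Instead of classifying all such $\mu$, the paper shows directly that either $A\cong T^4$ or $A\to T^4({\scriptstyle\epsilon_{23}^5})$: it parametrizes bases $f_1=e_1$, $f_2=e_2+\alpha e_3+\beta e_4$, $f_3=L_{e_1}f_2$, $f_4=L_{e_1}f_3$, $f_5=e_5$, and splits into the cases $\mu(f_2,f_3)\in\langle f_4\rangle$ for all $\alpha,\beta$ (forcing $A\cong T^4$ via nilpotence of several $L_a$'s) versus the existence of $\alpha,\beta$ with $\mu(f_2,f_3)\notin\langle f_4\rangle$, in which case nilpotence of $L_{e_2-\mu_{23}^4 e_1}$ and $L_{e_3}$ forces $\mu_{23}^5\neq 0$ and a scaling gives $\mu\to T^4({\scriptstyle\epsilon_{23}^5})$. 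This avoids the full classification you outline and sidesteps the unjustified containment.
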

\begin{proof} Let us show first that if $IW_1^{max}(A)\cong T^4$, $dim\,A=5$ and $A\not\cong T^4$, then  $A\to T^4({\scriptstyle\epsilon_{23}^5})$.
We may assume that $A$ is represented by a structure $\mu$ such that $IW_{e_1}(\mu)=T^4$. This means that $\mu(e_1,e_i)=\mu_{1,i}^1e_1+e_{i+1}$ for $2\le i\le 4$ and $\mu(e_1,e_5)=\mu_{15}^1e_1$. We have $\mu_{15}^1=0$ by the nilpotence of the operator $L_{e_5}$. Replacing $e_i$ by $\mu_{1,i-1}^1e_1+e_i$ for $3\le i\le 5$, we may assume that $\mu_{1,i}^1=0$ for all $2\le i\le 5$.

Let us pick some $\alpha,\beta\in\kk$ and consider a new basis of $V$ defined by the equalities $f_1=e_1$, $f_2=e_2+\alpha e_3+\beta e_4$, $f_3=e_3+\alpha e_4+\beta e_5$, $e_4=e_4+\alpha e_5$, $f_5=e_5$. Note that for any choice of $\alpha$ and $\beta$ we have $\mu(f_1,f_i)=f_{i+1}$ for $2\le i\le 4$ and $\mu(f_1,f_5)=0$. We will consider two cases.

\begin{itemize}
\item Suppose that $\mu(f_2,f_3)\subset \langle f_4\rangle=\langle e_4+\alpha e_5\rangle$ for any choice of $\alpha,\beta\in\kk$. Note that
\begin{multline*}
\mu(f_2,f_3)=\mu(e_2,e_3)+\alpha\mu(e_2,e_4)+\alpha^2\mu(e_3,e_4)\\
+\beta \big(\mu(e_2,e_5)-\mu(e_3,e_4)\big)+\alpha\beta\mu(e_3,e_5)+\beta^2 \mu(e_4,e_5).
\end{multline*}
Now one sees that the condition $\mu(f_2,f_3)\subset \langle f_4\rangle$ is equivalent to the qualities 
\begin{multline*}
\mu(e_2,e_3)=\mu_{23}^4e_4,\,\,\mu(e_2,e_4)=\mu_{24}^4e_4+\mu_{24}^5e_5,\,\,\mu(e_3,e_4)=\mu_{34}^5e_5,\\
\mu(e_2,e_5)=\mu_{25}^4e_4+\mu_{25}^5e_5,\,\,\mu(e_3,e_5)=\mu_{35}^5e_5,\,\,\mu(e_4,e_5)=0
\end{multline*}
with $\mu_{24}^5=\mu_{23}^4$, $\mu_{25}^5=\mu_{34}^5=\mu_{24}^4$ and $\mu_{35}^5=\mu_{25}^4$. The nilpotence of $L_{e_3}$ implies $\mu_{35}^5=0$, and hence $\mu(e_2,e_5)=\mu_{25}^5e_5$. Then the nilpotence of $L_{e_2}$ implies $\mu_{25}^5=0$, and hence $\mu$ is a structure whose nonzero products of basic elements are $\mu(e_1,e_i)=e_{i+1}$ for $2\le i\le 4$, $\mu(e_2,e_3)=\gamma e_4$ and $\mu(e_2,e_4)=\gamma e_5$ for some $\gamma\in \kk$. Considering the basis $e_1$, $e_2-\gamma e_1$, $e_3$, $e_4$, $e_5$, one sees that $\mu\cong T^4$.

\item Suppose that $\mu(f_2,f_3)\not\subset \langle f_4\rangle$ for some choice of $\alpha,\beta\in\kk$. Then we may assume that $\mu(e_2,e_3)\not\in\langle e_4\rangle$. Suppose that $\mu_{23}^5=0$. Let us denote by $v$ the vector $\mu_{23}^1e_1+\mu_{23}^2e_2+\mu_{23}^3e_3$ which is nonzero by our assumption.
Then one has
$$L_{e_2-\mu_{23}^4e_1}(e_3)=v,\,\,L_{e_2-\mu_{23}^4e_1}(v)=\mu_{23}^3 v-(\mu_{23}^1+\mu_{23}^2\mu_{23}^4)e_3.$$
The nilpotence of $L_{e_2-\mu_{23}^4e_1}$ implies $\mu_{23}^3=\mu_{23}^1+\mu_{23}^2\mu_{23}^4=0$. Now we have
$L_{e_3}(v)=-\mu_{23}^2v$ and the nilpotence of $L_{e_3}$ implies $\mu_{23}^2=0$, and hence $\mu_{23}^1=0$ that contradicts the fact that $v\not=0$.
Thus, we have $\mu_{23}^5\not=0$. Then $\mu\xrightarrow{te_1,\frac{t^2}{\mu_{23}^5}e_2,\frac{t^3}{\mu_{23}^5}e_3,\frac{t^4}{\mu_{23}^5}e_4,\frac{t^5}{\mu_{23}^5}e_5}T^4({\scriptstyle\epsilon_{23}^5})$.
\end{itemize}

Note that $T^4({\scriptstyle\epsilon_{23}^5})\xrightarrow{\not\cong}T^4$, and hence $lev\big(T^4({\scriptstyle\epsilon_{23}^5})\big)\ge 5$ and algebra $A$ with $IW_1^{max}(A)\cong T^4$ can have level five only if $A$ is represented by $T^4({\scriptstyle\epsilon_{23}^5})$. If $lev\big(T^4({\scriptstyle\epsilon_{23}^5})\big)> 5$, then $T^4({\scriptstyle\epsilon_{23}^5})$ degenerates to some algebra $B$ of level five. Since $IW_1^{max}(A)\to IW_1^{max}(B)$, we have $IW_1^{max}(B)\cong {\bf n}_3$, $IW_1^{max}(B)\cong T^{2,2}$, $IW_1^{max}(B)\cong T^3$ or $IW_1^{max}(B)\cong T^4$. Then $B$ can be represented by $T^3({\scriptstyle\epsilon_{34}^5})$ or $T^4({\scriptstyle\epsilon_{23}^5})$ because all other algebras with these maximal one-dimensional IW contractions and level five have dimensions greater than five.
In the last case $B\cong A$ by our arguments. Finally, $T^4({\scriptstyle\epsilon_{23}^5})\not\to_{(1,5,6),(1,4,5),(1,1,3),(2,4,6),(2,3,5)} T^3({\scriptstyle\epsilon_{34}^5})$.
\end{proof}


\section{Main Theorem}

In this section we state and prove the theorems giving the classification of anticommutative Engel algebras of levels from three to five, which is the main aim of this paper. We will give  also some consequences of our classification.
Let us recall that the finite dimensional algebras $A$ and $B$ are {\it stably isomorphic} if $A\oplus\kk^k\cong B\oplus\kk^l$ for some integers $k,l$.

\begin{theorem}\label{lev3}
Let $A$ be an $n$-dimensional anticommutative Engel algebra.
\begin{itemize}
\item If $dim\,A\le 4$, then $A$ has level not greater than two.
\item If $dim\,A=5$, then $A$ has level $3$ if and only if it can be represented by $T^3$.
\item If $dim\,A=6$, then $A$ has level $3$ if and only if it can be represented by $T^{2,2}({\scriptstyle\epsilon_{23}^{4}})$, $T^{2,2}({\scriptstyle\epsilon_{24}^6})$ or $T^3$.
\item If $dim\,A\ge 7$, then $A$ has level $3$ if and only if it can be represented by $\eta_3$, $T^{2,2}({\scriptstyle\epsilon_{23}^{n-2}})$, $T^{2,2}({\scriptstyle\epsilon_{24}^n})$, $T^3$ or $T^{2,2,2}$.
\end{itemize}
\end{theorem}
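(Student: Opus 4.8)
The plan is to follow the paper's organizing principle and split according to the isomorphism type of $IW_1^{max}(A)$. Since $lev\big(IW_1^{max}(A)\big)\le lev(A)$, if $lev(A)=3$ then $IW_1^{max}(A)$ is a nilpotent one-dimensional IW contraction of an algebra of generation type $1$ and of level at most $3$; by Table 1, together with the extra possibility $\kk^n$ that is not listed there, the only options are $\kk^n$, ${\bf n}_3$, $T^{2,2}$, $T^3$ and $T^{2,2,2}$. First I would eliminate $IW_1^{max}(A)\cong\kk^n$: for an anticommutative Engel algebra this means $L_a(A)\subseteq\langle a\rangle$ for all $a$, and combining this for two independent elements with anticommutativity forces $L_a=0$, i.e. $A\cong\kk^n$ (level $0$, so never level $3$).

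Next I would treat the four remaining cases with the classification results already in the paper. If $IW_1^{max}(A)\cong{\bf n}_3$, then $A\cong\eta_m$ and $lev(\eta_m)=m$ by Table 2, so $lev(A)=3$ exactly when $m=3$, which requires $n\ge 7$. If $IW_1^{max}(A)\cong T^{2,2}$, Lemma \ref{T22lev} gives $lev(A)=3$ iff $A$ is representable by $T^{2,2}({\scriptstyle\epsilon_{23}^{n-2}})$ or $T^{2,2}({\scriptstyle\epsilon_{24}^n})$, both of which require $n\ge 6$. If $IW_1^{max}(A)\cong T^3$ with $n=4$, Lemma \ref{T3d4} forces $A\cong T^3$, of level $2$; while for $n\ge 5$ I would combine the structural Lemmas \ref{T3rest1} and \ref{T3rest2} with the case analysis inside the proof of Lemma \ref{T3lev} to conclude that any such $A$ is either $T^3$ (level $3$, by Table 1) or of level at least $4$ (it is one of the $\eta_m({\scriptstyle\epsilon_{1,2m+1}^{n-1}+\epsilon_{2,2m+1}^n})$, or else degenerates non-trivially to one of $T^3({\scriptstyle\epsilon_{24}^{n}})$, $\eta_2({\scriptstyle\epsilon_{15}^n})$, $T^3({\scriptstyle\epsilon_{34}^n})$, $T^3({\scriptstyle\epsilon_{45}^n})$), so $lev(A)=3\iff A\cong T^3$. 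Finally, if $IW_1^{max}(A)\cong T^{2,2,2}$ then $n\ge 7$ and, by Lemma \ref{T2k2rest3}, either $A\cong T^{2,2,2}$ (level $3$, by Table 1) or $A\to T^{2,2,2}({\scriptstyle\epsilon_{23}^n})$, which has level $4$ by Lemma \ref{T222lev}; again $lev(A)=3\iff A\cong T^{2,2,2}$.

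Assembling these conclusions by dimension then yields the four bulleted statements: for $n\le 4$ only $\kk^n$, ${\bf n}_3$ (with $A\cong\eta_1$) and $T^3$ in dimension $4$ can occur, all of level $\le 2$; for $n=5$ the level-$3$ representatives for $T^{2,2}$ and $T^{2,2,2}$ do not exist, leaving only $T^3$; for $n=6$ one must add $T^{2,2}({\scriptstyle\epsilon_{23}^{4}})$ and $T^{2,2}({\scriptstyle\epsilon_{24}^6})$, but $\eta_3$ and $T^{2,2,2}$ still do not exist; and for $n\ge 7$ all five families appear. The converse direction is immediate: each listed algebra is anticommutative, Engel and of level exactly $3$, as recorded in Table 1 ($T^3$, $T^{2,2,2}$), Table 2 ($\eta_3$) and Lemma \ref{T22lev} (the two $T^{2,2}$-families).

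\textbf{Main obstacle.} The step I expect to be hardest is the case $IW_1^{max}(A)\cong T^3$ with $n\ge 5$, because Lemma \ref{T3lev} is stated for levels $4$ and $5$ rather than $3$: one has to revisit its proof and the structural Lemmas \ref{T3rest1}, \ref{T3rest2} to verify that $T^3$ is the unique level-$3$ algebra with this maximal contraction and that every other such algebra --- including all the algebras $\eta_m({\scriptstyle\epsilon_{1,2m+1}^{n-1}+\epsilon_{2,2m+1}^n})$ and all sub-cases of the distinguished normal form occurring there --- jumps straight to level at least $4$. Once that is in place, the remaining work is routine bookkeeping with the dimension bounds coming from the tables and lemmas.
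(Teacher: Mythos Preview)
Your proposal is correct and follows essentially the same route as the paper: split by $IW_1^{max}(A)$, use Table~1 to bound the possibilities, invoke Table~2 for the ${\bf n}_3$ case, Lemma~\ref{T22lev} for $T^{2,2}$, Lemma~\ref{T3d4} for $T^3$ in dimension $4$, and the structural lemmas for $T^{2,2,2}$ and $T^3$ ($n\ge 5$). The paper's proof is terser---it simply asserts that for $IW_1^{max}(A)\cong T^{2,2,2}$ or $IW_1^{max}(A)\cong T^3$ with $n\ge 5$ one has $lev(A)=3\iff A\cong IW_1^{max}(A)$---whereas you correctly spell out that the $T^3$ half of this claim is not the literal statement of Lemma~\ref{T3lev} and must be extracted from its proof together with Lemmas~\ref{T3rest1} and~\ref{T3rest2}; your identification of this as the only real obstacle is accurate.
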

\begin{proof} The algebra $A$ can have level $3$ only if $lev\big(IW_1^{max}(A)\big)\le 3$. Then $IW_1^{max}(A)$ is isomorphic to one of the structures ${\bf n}_3$, $T^{2,2}$, $T^3$ and $T^{2,2,2}$. In the first case $A$ has level $3$ if and only if it is represented by $\eta_3$. If either $IW_1^{max}(A)\cong T^{2,2,2}$ or $dim\,A\ge 5$ and $IW_1^{max}(A)\cong T^3$, then $A$ has level $3$ if and only if $A\cong IW_1^{max}(A)$.
The remaining part of the theorem follows from Lemmas \ref{T22lev} and \ref{T3d4}.
\end{proof}

Now we can recover the nilpotent part of the classification from \cite{gorb93} in a correct form.

\begin{coro} An anticommutative Engel algebra $A$ has infinite level $3$ if and only if it is stably isomorphic to $\eta_3$, $T^{2,2}({\scriptstyle\epsilon_{23}^{n-2}})$, $T^{2,2}({\scriptstyle\epsilon_{24}^n})$, $T^3$ or $T^{2,2,2}$.
\end{coro}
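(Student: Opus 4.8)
The plan is to derive the corollary from Theorem~\ref{lev3} together with two elementary observations about the infinite level. First, directly from the definition, $lev_\infty(A)=\lim_{m\to\infty}lev\big((A\oplus\kk)\oplus\kk^{m}\big)=lev_\infty(A\oplus\kk)$, so $lev_\infty$ is constant on stable isomorphism classes; hence it suffices to compute $lev_\infty$ for the five algebras of the list (in any one fixed dimension) and, conversely, to show that any anticommutative Engel algebra with $lev_\infty=3$ is stably isomorphic to one of them. Second, I would note that $lev(A\oplus\kk^{m})$ is non-decreasing in $m$: a chain of non-trivial degenerations starting at $A$ produces, after forming direct sums with $\kk$, a chain of the same length starting at $A\oplus\kk$, and non-triviality survives because $X\oplus\kk\cong Y\oplus\kk$ implies $X\cong Y$ (this is the remark stated just before the corollary). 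Being a convergent sequence of integers, $lev(A\oplus\kk^{m})$ is eventually constant and equal to $lev_\infty(A)$.

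For the forward implication, assume $lev_\infty(A)=3$ and pick $m$ so large that $n:=dim\,A+m\ge 7$ and $lev(A\oplus\kk^{m})=3$. By the last bullet of Theorem~\ref{lev3}, $A\oplus\kk^{m}$ is then isomorphic to one of $\eta_3$, $T^{2,2}({\scriptstyle\epsilon_{23}^{n-2}})$, $T^{2,2}({\scriptstyle\epsilon_{24}^n})$, $T^3$ or $T^{2,2,2}$ in dimension $n$. Reading off the multiplication tables, each of these $n$-dimensional algebras is a trivial singular extension by $\kk^{n-n_0}$ of the corresponding algebra of least admissible dimension $n_0$ (with $n_0=7$ for $\eta_3$ and $T^{2,2,2}$, $n_0=6$ for the two algebras $T^{2,2}(\cdot)$, and $T^3$ in dimension $4$); therefore $A$ is stably isomorphic to that small algebra, i.e. to one of $\eta_3$, $T^{2,2}({\scriptstyle\epsilon_{23}^{n-2}})$, $T^{2,2}({\scriptstyle\epsilon_{24}^n})$, $T^3$, $T^{2,2,2}$.

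For the converse, suppose $A$ is stably isomorphic to one of the listed algebras $B$. Then $lev_\infty(A)=lev_\infty(B)$ by the first observation, so it remains to check $lev_\infty(B)=3$. For all large $m$ the algebra $B\oplus\kk^{m}$ has dimension $\ge 7$ and is again one of the five listed algebras, hence has level exactly $3$ by Theorem~\ref{lev3}; thus $lev(B\oplus\kk^{m})=3$ for large $m$ and $lev_\infty(B)=3$.

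The only point that is not completely formal is the bookkeeping inside the second paragraph: one must check that the families occurring in Theorem~\ref{lev3} genuinely stabilize, in the sense that the $n$-dimensional representative is obtained from a fixed algebra of dimension $n_0$ by adjoining a zero summand $\kk^{n-n_0}$, and that $\eta_3$, $T^{2,2}({\scriptstyle\epsilon_{23}^{n-2}})$, $T^{2,2}({\scriptstyle\epsilon_{24}^n})$, $T^3$, $T^{2,2,2}$ are pairwise non-isomorphic in every dimension, so that the five stable classes are really distinct. Both are immediate from the multiplication tables, so I expect no essential difficulty.
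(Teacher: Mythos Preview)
Your proposal is correct and follows the natural route: the paper states this corollary without proof, treating it as an immediate consequence of Theorem~\ref{lev3}, and your argument supplies exactly the details one would expect --- stability of $lev_\infty$ under $\oplus\kk$, monotonicity of $lev(A\oplus\kk^m)$ via the cancellation fact $A\oplus\kk\cong B\oplus\kk\Rightarrow A\cong B$, and the observation that each of the five families in Theorem~\ref{lev3} is obtained from a fixed algebra by adjoining trivial summands. One small correction: the remark you cite about cancellation is not located ``just before the corollary'' but rather appears after Lemma~\ref{T222lev}; this does not affect the mathematics.
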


\begin{theorem}\label{lev4}
Let $A$ be an $n$-dimensional anticommutative Engel algebra.
\begin{itemize}
\item If $dim\,A=5$, then $A$ has level $4$ if and only if it can be represented by $T^3({\scriptstyle\epsilon_{23}^{4}})$, $T^3({\scriptstyle\epsilon_{24}^{5}})$ or $T^4$.
\item If $dim\,A=6$, then $A$ has level $4$ if and only if it can be represented by $T^{2,2}({\scriptstyle\epsilon_{34}^{6}})$, $T^3({\scriptstyle\epsilon_{23}^{5}})$, $T^3({\scriptstyle\epsilon_{24}^{6}})$ or $T^{3,2}$.
\item If $dim\,A=7,8$, then $A$ has level $4$ if and only if it can be represented by $T^{2,2}({\scriptstyle\epsilon_{34}^{n}})$, $T^3({\scriptstyle\epsilon_{23}^{n-1}})$, $T^3({\scriptstyle\epsilon_{24}^{n}})$, $T^{2,2,2}({\scriptstyle\epsilon_{23}^{n}})$ or $T^{3,2}$.
\item If $dim\,A\ge 9$, then $A$ has level $4$ if and only if it can be represented by $\eta_4$, $T^{2,2}({\scriptstyle\epsilon_{34}^{n}})$, $T^3({\scriptstyle\epsilon_{23}^{n-1}})$, $T^3({\scriptstyle\epsilon_{24}^{n}})$, $T^{2,2,2}({\scriptstyle\epsilon_{23}^{n}})$, $T^{3,2}$ or $T^{2,2,2,2}$.
\end{itemize}
\end{theorem}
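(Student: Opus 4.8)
The plan is to follow the strategy of the proof of Theorem~\ref{lev3}. Since $lev\big(IW_1^{max}(A)\big)\le lev(A)$, an algebra of level $4$ must have $IW_1^{max}(A)$ of level at most $4$; inspecting Table~1 (and recalling that $T^4$ has level $4$ in dimension $5$ but level $5$ in dimensions $\ge 6$) this leaves only the possibilities $IW_1^{max}(A)\cong{\bf n}_3$, $T^{2,2}$, $T^3$, $T^{2,2,2}$, $T^{3,2}$, $T^{2,2,2,2}$, and, when $dim\,A=5$, also $T^4$. I would treat these cases one at a time, in each case using the classification of all algebras with the given maximal one-dimensional IW contraction established in the previous sections.

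The cases ${\bf n}_3$, $T^{2,2}$, $T^3$ (with $dim\,A\ge5$) and $T^{2,2,2}$ are immediate. If $IW_1^{max}(A)\cong{\bf n}_3$ then $A\cong\eta_m$ for some $m$ (Table~2) and $lev(\eta_m)=m$, so level $4$ forces $A\cong\eta_4$, hence $n\ge9$. The remaining three are handled by Lemmas~\ref{T22lev}, \ref{T3lev} and \ref{T222lev}, which say that the level-$4$ algebras with these contractions are exactly $T^{2,2}({\scriptstyle\epsilon_{34}^n})$ (for $n\ge6$), $T^3({\scriptstyle\epsilon_{23}^{n-1}})$ and $T^3({\scriptstyle\epsilon_{24}^n})$ (for $n\ge5$), and $T^{2,2,2}({\scriptstyle\epsilon_{23}^n})$ (for $n\ge7$). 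The case $IW_1^{max}(A)\cong T^3$ with $dim\,A=4$ gives only $A\cong T^3$ (Lemma~\ref{T3d4}), whose level is $2$, so it contributes nothing and is in any case outside the range $dim\,A\ge5$ of the statement.

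For $IW_1^{max}(A)$ equal to $T^{3,2}$, $T^{2,2,2,2}$ or $T^4$ (the latter only in dimension $5$) one extra argument is needed, because Lemmas~\ref{T32lev}, \ref{T2222lev} and \ref{T4lev} classify algebras of level $5$ rather than $4$. In each of these three cases $IW_1^{max}(A)$ itself has level $4$ (Table~1), so $lev(A)\ge4$ automatically; and if $A\not\cong IW_1^{max}(A)$ then the cited lemmas (or their proofs) give a degeneration of $A$ onto one of the algebras $T^{3,2}({\scriptstyle\epsilon_{23}^n})$, $T^{2,2,2,2}({\scriptstyle\epsilon_{23}^n})$, $T^4({\scriptstyle\epsilon_{23}^5})$, each of which has level at least $5$ (Lemma~\ref{T2222lev} in fact shows $lev(A)\ge7$ in the $T^{2,2,2,2}$ case), whence $lev(A)\ge5>4$. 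Hence in these three cases level $4$ is equivalent to $A\cong T^{3,2}$, $A\cong T^{2,2,2,2}$ or $A\cong T^4$, with $n\ge6$, $n\ge9$ and $n=5$ respectively.

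Putting everything together, the anticommutative Engel algebras of level $4$ are exactly $\eta_4$, $T^{2,2}({\scriptstyle\epsilon_{34}^n})$, $T^3({\scriptstyle\epsilon_{23}^{n-1}})$, $T^3({\scriptstyle\epsilon_{24}^n})$, $T^{2,2,2}({\scriptstyle\epsilon_{23}^n})$, $T^{3,2}$, $T^{2,2,2,2}$ and, in dimension $5$, $T^4$; the four bullet points of the theorem then follow by recording, for $n=5$, $n=6$, $n\in\{7,8\}$ and $n\ge9$, which of these are available in the given dimension, using that $T^4$ occurs only for $n=5$, that $T^{2,2}({\scriptstyle\epsilon_{34}^n})$ and $T^{3,2}$ require $n\ge6$, that $T^{2,2,2}({\scriptstyle\epsilon_{23}^n})$ requires $n\ge7$, and that $\eta_4$ and $T^{2,2,2,2}$ require $n\ge9$. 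All the real work lies in the lemmas of the preceding sections; the only things to be careful about are the three ``level $5$'' cases above and the precise dimension thresholds.
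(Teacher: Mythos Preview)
Your proposal is correct and follows essentially the same approach as the paper: list the possible maximal one-dimensional IW contractions of level at most $4$ and dispatch each case via the relevant lemma. Your treatment of the $T^{3,2}$, $T^{2,2,2,2}$ and $T^4$ cases is in fact more carefully spelled out than the paper's, which simply asserts that $lev(A)=4$ forces $A\cong IW_1^{max}(A)$ there; your observation that the proofs of Lemmas~\ref{T32lev}, \ref{T2222lev} and \ref{T4lev} supply the needed degenerations is exactly the right justification.
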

\begin{proof} The algebra $A$ can have level $4$ only if $lev\big(IW_1^{max}(A)\big)\le 4$. Then $IW_1^{max}(A)$ is isomorphic to one of the structures ${\bf n}_3$, $T^{2,2}$, $T^3$, $T^{2,2,2}$, $T^{3,2}$, $T^{2,2,2,2}$ or $T^4$, where the last case can occur only if $dim\,A=5$. In the first case $A$ has level $4$ if and only if it is represented by $\eta_4$. If $IW_1^{max}(A)\cong T^{3,2}$, $IW_1^{max}(A)\cong T^{2,2,2,2}$ or $IW_1^{max}(A)\cong T^4$, where in the last case $dim\,A=5$, then $A$ has level $4$ if and only if $A\cong IW_1^{max}(A)$.
The remaining part of the theorem follows from Lemmas \ref{T22lev}, \ref{T222lev}, \ref{T3lev} and \ref{T3d4}.
\end{proof}

\begin{coro} An anticommutative Engel algebra $A$ has infinite level $4$ if and only if it is stably isomorphic to $\eta_4$, $T^{2,2}({\scriptstyle\epsilon_{34}^{n}})$, $T^3({\scriptstyle\epsilon_{23}^{n-1}})$, $T^3({\scriptstyle\epsilon_{24}^{n}})$, $T^{2,2,2}({\scriptstyle\epsilon_{23}^{n}})$, $T^{3,2}$ or $T^{2,2,2,2}$.
\end{coro}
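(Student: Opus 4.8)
The plan is to read off this corollary from Theorem \ref{lev4} together with two elementary observations about the operation $A\mapsto A\oplus\kk$. First I would note that $lev_\infty$ is an invariant of the stable isomorphism class: since $lev_\infty(A\oplus\kk)=\lim\limits_{m\to\infty}lev(A\oplus\kk^{m+1})=lev_\infty(A)$, stably isomorphic algebras have equal infinite level. Second I would record the monotonicity $lev(A\oplus\kk^{m+1})\ge lev(A\oplus\kk^m)$: a chain of non-trivial degenerations $A\oplus\kk^m\xrightarrow{\not\cong}\dots\xrightarrow{\not\cong}B$ can be turned into a chain $A\oplus\kk^{m+1}\xrightarrow{\not\cong}\dots\xrightarrow{\not\cong}B\oplus\kk$, using that degenerations are stable under direct sum with a fixed algebra and that $X\oplus\kk\cong Y\oplus\kk$ forces $X\cong Y$ (the cancellation property recalled in the remark after Lemma \ref{T222lev}). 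Hence the integer sequence $lev(A\oplus\kk^m)$ is non-decreasing, so if its limit is $4$ then $lev(A\oplus\kk^m)=4$ for all sufficiently large $m$.

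For the ``if'' direction, suppose $A$ is stably isomorphic to one of the seven listed algebras. By the first observation it suffices to show each of them has infinite level $4$. The key point is the easy fact that every family in the list is closed under adjoining a one-dimensional zero summand: $\eta_4\oplus\kk\cong\eta_4$, $T^{2,2}({\scriptstyle\epsilon_{34}^n})\oplus\kk\cong T^{2,2}({\scriptstyle\epsilon_{34}^{n+1}})$, $T^3({\scriptstyle\epsilon_{23}^{n-1}})\oplus\kk\cong T^3({\scriptstyle\epsilon_{23}^n})$, $T^3({\scriptstyle\epsilon_{24}^n})\oplus\kk\cong T^3({\scriptstyle\epsilon_{24}^{n+1}})$, $T^{2,2,2}({\scriptstyle\epsilon_{23}^n})\oplus\kk\cong T^{2,2,2}({\scriptstyle\epsilon_{23}^{n+1}})$, $T^{3,2}\oplus\kk\cong T^{3,2}$ and $T^{2,2,2,2}\oplus\kk\cong T^{2,2,2,2}$ in the next dimension, each verified by inserting the new basis vector into a suitable slot and relabelling. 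Thus for $m$ large $B\oplus\kk^m$ is again a member of the same family, now of dimension $dim\,B+m\ge 9$, hence of level exactly $4$ by Theorem \ref{lev4}; therefore $lev_\infty(B)=4$.

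For the ``only if'' direction, assume $lev_\infty(A)=4$. By the monotonicity above there is $m$ with $dim\,A+m\ge 9$ and $lev(A\oplus\kk^m)=4$. The algebra $A\oplus\kk^m$ is again anticommutative (as $(a,v)^2=(a^2,0)=0$) and Engel (as $(L_{(a,v)})^k=0$ whenever $(L_a)^k=0$), and it is of generation type $1$ since every element squares to zero, so $IW_1^{max}$ is defined and Theorem \ref{lev4} applies in dimension $dim\,A+m\ge 9$. It yields that $A\oplus\kk^m$ is isomorphic to one of $\eta_4$, $T^{2,2}({\scriptstyle\epsilon_{34}^n})$, $T^3({\scriptstyle\epsilon_{23}^{n-1}})$, $T^3({\scriptstyle\epsilon_{24}^n})$, $T^{2,2,2}({\scriptstyle\epsilon_{23}^n})$, $T^{3,2}$ or $T^{2,2,2,2}$, which exhibits $A$ as stably isomorphic to the corresponding algebra.

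The argument is essentially bookkeeping; the only step needing a little care is the first paragraph, namely making sure the sequence $lev(A\oplus\kk^m)$ really is non-decreasing (so that ``$\lim=4$'' upgrades to ``$=4$ eventually'') and that each listed family is genuinely stable under $\oplus\,\kk$ so that no member has a larger infinite level than $4$. Once both facts are in place, the corollary is immediate from Theorem \ref{lev4} with no further computation. The analogous corollaries following Theorems \ref{lev3} and the level-$5$ theorem would be proved in exactly the same way.
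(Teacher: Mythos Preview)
Your argument is correct and is exactly the intended one: the paper states this corollary without proof, treating it as an immediate consequence of Theorem~\ref{lev4} and the definition of infinite level, and you have spelled out precisely the bookkeeping that is being taken for granted (monotonicity of $m\mapsto lev(A\oplus\kk^m)$, stability of each listed family under $\oplus\,\kk$, and the cancellation property). One small remark: the aside that $A\oplus\kk^m$ has generation type $1$ so that $IW_1^{max}$ is defined is superfluous, since Theorem~\ref{lev4} is stated for arbitrary anticommutative Engel algebras and can be invoked directly.
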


\begin{coro} Any anticommutative Engel algebra of level not greater than $4$ is a Lie algebra.
\end{coro}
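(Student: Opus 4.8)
The plan is to go through the explicit lists of algebras appearing in Theorems \ref{lev3} and \ref{lev4} (together with the level-$1$ and level-$2$ classifications recalled earlier) and verify that each representative is a Lie algebra, i.e. satisfies the Jacobi identity. Since being a Lie algebra is preserved under isomorphism, it suffices to check it for each named structure in the multiplication tables. The classification is organized by $IW_1^{max}(A)$, so I would organize the verification the same way: an anticommutative Engel algebra of level $\le 4$ has $lev\big(IW_1^{max}(A)\big)\le 4$, hence $IW_1^{max}(A)$ is one of ${\bf n}_3$, $T^{2,2}$, $T^3$, $T^{2,2,2}$, $T^{3,2}$, $T^{2,2,2,2}$, or (only in dimension $5$) $T^4$. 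Then the corresponding $A$ is on the finite list produced by Theorems \ref{lev3}, \ref{lev4} and the earlier results.

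For the case $IW_1^{max}(A)\cong {\bf n}_3$ the algebra is one of the Heisenberg Lie algebras $\eta_m$ from Table 2, which are visibly Lie (a single central generator, all triple products vanish). For $IW_1^{max}(A)\cong T^{2,2}, T^{2,2,2}, T^{2,2,2,2}$ one checks the algebras $T^{\overbrace{\scriptstyle 2,\dots,2}^{m}}$, $T^{2,2}({\scriptstyle\epsilon_{23}^{n-2}})$, $T^{2,2}({\scriptstyle\epsilon_{34}^n})$, $T^{2,2,2}({\scriptstyle\epsilon_{23}^n})$; in all of these $A^3=0$ except when there is a product landing outside the last coordinates, and even there a direct inspection of the few nonzero triple products shows the Jacobi identity holds (for instance $T^{2,2}({\scriptstyle\epsilon_{23}^{n-2}})$ has $A^3=0$). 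For $IW_1^{max}(A)\cong T^3$ with $\dim A\ge 5$ the relevant algebras are $T^3$, $T^3({\scriptstyle\epsilon_{23}^{n-1}})$, $T^3({\scriptstyle\epsilon_{24}^n})$; for $IW_1^{max}(A)\cong T^{3,2}$ they are $T^{3,2}$ and $T^{3,2}({\scriptstyle\epsilon_{23}^n})$ (and indeed the proof of Lemma \ref{T32lev} already uses that $T^{3,2}({\scriptstyle\epsilon_{23}^n})$ is a Lie algebra). The only genuinely non-nilpotent-looking small cases, $IW_1^{max}(A)\cong T^3$ with $\dim A=4$ and $IW_1^{max}(A)\cong T^4$ with $\dim A=5$, give only $T^3$, $T^4$ and $T^4({\scriptstyle\epsilon_{23}^5})$, each again a single chain of left multiplications with an extra product $e_2e_3$ and an immediate Jacobi check.

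The argument is essentially bookkeeping, so the only real point of care — and what I expect to be the mildly tedious part rather than a true obstacle — is to make sure the verification is uniform: rather than checking Jacobi coordinate-by-coordinate for each of the dozen-or-so tables, I would note that every structure on the combined lists satisfies $A^3=0$ \emph{or} has all its products expressible through a single nilpotent left-multiplication operator $L_{e_1}$ plus one extra antisymmetric $2$-cochain supported on $\langle e_2,e_3\rangle$ (type $T^k({\scriptstyle\epsilon_{23}^\bullet})$) or $\langle e_2,e_4\rangle$, $\langle e_4,e_5\rangle$ etc.; in each such family the Jacobi identity on the three relevant generators reduces to a single identity that one checks once. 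Concluding, every algebra on the list is a Lie algebra, hence so is any anticommutative Engel algebra of level at most $4$. $\Box$
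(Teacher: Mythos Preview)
Your approach is correct and is exactly the paper's (implicit) argument: the corollary is stated without proof because it follows by inspecting the finite list produced by the level-$1$ and level-$2$ classifications together with Theorems \ref{lev3} and \ref{lev4}, and checking that every structure on it satisfies the Jacobi identity. Two minor bookkeeping slips to fix: in your $T^{2,2}$ grouping you omitted $T^{2,2}({\scriptstyle\epsilon_{24}^n})$ (which appears at level $3$), and conversely you needlessly included the level-$5$ algebras $T^{3,2}({\scriptstyle\epsilon_{23}^n})$ and $T^4({\scriptstyle\epsilon_{23}^5})$; neither slip affects the outcome, since all of these satisfy $A^3=0$ or are verified to be Lie by the same one-line Jacobi check you describe.
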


\begin{theorem}\label{lev5}
Let $A$ be an $n$-dimensional anticommutative Engel algebra.
\begin{itemize}
\item If $dim\,A=5$, then $A$ has level $5$ if and only if it can be represented by $T^3({\scriptstyle\epsilon_{34}^{5}})$ or $T^4({\scriptstyle\epsilon_{23}^5})$.
\item If $dim\,A=6$, then $A$ has level $5$ if and only if it can be represented by $T^3({\scriptstyle\epsilon_{34}^{6}})$, $T^3({\scriptstyle\epsilon_{45}^{6}})$, $T^{3,2}({\scriptstyle\epsilon_{23}^{6}})$ or $T^4$.
\item If $dim\,A=7$, then $A$ has level $5$ if and only if it can be represented by $T^{2,2}({\scriptstyle\epsilon_{45}^{7}})$, $T^3({\scriptstyle\epsilon_{34}^{7}})$, $T^{2,2,2}({\scriptstyle\epsilon_{24}^7})$, $T^{2,2,2}({\scriptstyle\epsilon_{23}^4-\epsilon_{26}^7+\epsilon_{35}^7})$, $T^{3,2}({\scriptstyle\epsilon_{23}^{7}})$, $T^4$ or $T^{3,3}$.
\item If $8\le dim\,A\le 10$, then $A$ has level $5$ if and only if it can be represented by $T^{2,2}({\scriptstyle\epsilon_{45}^{n}})$, $T^3({\scriptstyle\epsilon_{34}^{n}})$, $T^{2,2,2}({\scriptstyle\epsilon_{24}^n})$, $T^{3,2}({\scriptstyle\epsilon_{23}^{n}})$, $T^4$ or $T^{3,2,2}$.
\item If $dim\,A\ge 11$, then $A$ has level $5$ if and only if it can be represented by $\eta_5$, $T^{2,2}({\scriptstyle\epsilon_{45}^{n}})$, $T^3({\scriptstyle\epsilon_{34}^{n}})$, $T^{2,2,2}({\scriptstyle\epsilon_{24}^{n}})$, $T^{3,2}({\scriptstyle\epsilon_{23}^{n}})$, $T^{2,2,2,2,2}$, $T^{3,2,2}$ or $T^4$.
\end{itemize}
\end{theorem}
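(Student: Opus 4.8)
The plan is to follow exactly the scheme used for Theorems \ref{lev3} and \ref{lev4}. First I would note that $lev(A)=5$ forces $lev\big(IW_1^{max}(A)\big)\le 5$, so by Table 1 the contraction $IW_1^{max}(A)$ is isomorphic to one of ${\bf n}_3$, $T^{2,2}$, $T^{2,2,2}$, $T^{2,2,2,2}$, $T^3$, $T^{3,2}$, $T^4$, $T^{3,3}$, $T^{3,2,2}$ or $T^{2,2,2,2,2}$, and I would argue case by case according to which one occurs, carrying along the dimension restrictions of Table 1 (e.g. $T^{3,3}$ occurs only in dimension $7$, $T^{2,2,2,2,2}$ only in dimension $\ge 11$, $T^{3,2,2}$ only in dimension $\ge 8$, and $T^4$ has level $5$ only in dimension $\ge 6$ while in dimension $5$ it has level $4$).

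For the contractions whose level is already $5$ in the relevant dimension — $T^4$ (dimension $\ge 6$), $T^{3,3}$, $T^{3,2,2}$, $T^{2,2,2,2,2}$ — one has $A\xrightarrow{\not\cong}IW_1^{max}(A)$ whenever $A\not\cong IW_1^{max}(A)$, so $lev(A)=5$ forces $A\cong IW_1^{max}(A)$; these give the entries $T^4$, $T^{3,3}$, $T^{3,2,2}$, $T^{2,2,2,2,2}$. For $IW_1^{max}(A)\cong {\bf n}_3$ the results of \cite{kpv19} together with Table 2 give $A\cong\eta_5$, which needs $\dim A\ge 11$. The remaining contractions are covered by the lemmas already proved: Lemma \ref{T22lev} contributes $T^{2,2}({\scriptstyle\epsilon_{45}^n})$ (requiring $\dim A\ge 7$); Lemma \ref{T222lev} contributes $T^{2,2,2}({\scriptstyle\epsilon_{24}^n})$ (requiring $\dim A\ge 7$) and $T^{2,2,2}({\scriptstyle\epsilon_{23}^4-\epsilon_{26}^7+\epsilon_{35}^7})$ (only $\dim A=7$, by Lemma \ref{ex222}); Lemma \ref{T2222lev} shows the case $IW_1^{max}(A)\cong T^{2,2,2,2}$ contributes nothing of level $5$ (there $A\cong T^{2,2,2,2}$ has level $4$ and everything else has level $\ge 7$); Lemmas \ref{T3lev} and \ref{T3d4} contribute $T^3({\scriptstyle\epsilon_{34}^n})$ and, only in dimension $6$, $T^3({\scriptstyle\epsilon_{45}^6})$; and Lemmas \ref{T32lev}, \ref{T4lev} contribute $T^{3,2}({\scriptstyle\epsilon_{23}^n})$ (requiring $\dim A\ge 6$) and, only in dimension $5$, $T^4({\scriptstyle\epsilon_{23}^5})$. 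Since every one of these lemmas is an ``if and only if'' statement about $lev(A)=5$, both directions of the classification are obtained at once.

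Having assembled this master list, the last step is purely organizational: for each of the ranges $\dim A=5$, $\dim A=6$, $\dim A=7$, $8\le\dim A\le 10$, $\dim A\ge 11$, intersect the master list with the dimension constraint attached to each structure. In dimension $5$ only $T^3({\scriptstyle\epsilon_{34}^5})$ and $T^4({\scriptstyle\epsilon_{23}^5})$ survive; $T^3({\scriptstyle\epsilon_{45}^n})$ and $T^4$ of level $5$ appear starting at dimension $6$; $T^{3,2}({\scriptstyle\epsilon_{23}^n})$ starting at dimension $6$; $T^{2,2}({\scriptstyle\epsilon_{45}^n})$, $T^{2,2,2}({\scriptstyle\epsilon_{24}^n})$ and $T^{2,2,2}({\scriptstyle\epsilon_{23}^4-\epsilon_{26}^7+\epsilon_{35}^7})$ at dimension $7$; $T^{3,3}$ only at dimension $7$; $T^{3,2,2}$ at dimension $\ge 8$; and $\eta_5$, $T^{2,2,2,2,2}$ at dimension $\ge 11$. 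This reproduces the five bulleted lists verbatim.

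I do not anticipate a genuine obstacle, because all the real degeneration and non-degeneration work is already done in the preceding lemmas; the only thing that needs care is the bookkeeping of dimension thresholds — in particular that $T^3({\scriptstyle\epsilon_{45}^n})$ has level $5$ only for $n=6$ (for $n\ge 7$ it degenerates onto $T^{2,2}({\scriptstyle\epsilon_{45}^n})$, hence has level $\ge 6$, by the proof of Lemma \ref{T3lev}), that $T^{2,2,2}({\scriptstyle\epsilon_{23}^4-\epsilon_{26}^7+\epsilon_{35}^7})$ is genuinely a dimension-$7$ phenomenon (Lemma \ref{ex222}), and that $T^4$ jumps from level $4$ to level $5$ between dimension $5$ and dimension $6$.
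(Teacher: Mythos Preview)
Your proposal is correct and follows essentially the same approach as the paper: reduce to the possible values of $IW_1^{max}(A)$ via Table~1, dispose of the cases where the contraction already has level $5$ by the trivial degeneration $A\to IW_1^{max}(A)$, and invoke Lemmas \ref{T22lev}, \ref{T222lev}, \ref{T2222lev}, \ref{T3lev}, \ref{T32lev}, \ref{T3d4}, \ref{T4lev} for the rest. Your write-up is more explicit about the dimension bookkeeping than the paper's, but the argument is the same.
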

\begin{proof} The algebra $A$ can have level $5$ only if $lev\big(IW_1^{max}(A)\big)\le 5$. Then $IW_1^{max}(A)$ is isomorphic to one of the structures ${\bf n}_3$, $T^{2,2}$, $T^3$, $T^{2,2,2}$, $T^{3,2}$, $T^{2,2,2,2}$, $T^4$, $T^{3,2,2}$, $T^{2,2,2,2,2}$ or $T^{3,3}$ where the last case can occur only if $dim\,A=7$. In the first case $A$ has level $5$ if and only if it is represented by $\eta_5$. If $IW_1^{max}(A)\cong T^4$, $IW_1^{max}(A)\cong T^{3,2,2}$, $IW_1^{max}(A)\cong T^{2,2,2,2,2}$ or $IW_1^{max}(A)\cong T^{3,3}$, where in the first case $dim\,A\ge 6$ and in last case $dim\,A=7$, then $A$ has level $5$ if and only if $A\cong IW_1^{max}(A)$.
The remaining part of the theorem follows from Lemmas \ref{T22lev}, \ref{T222lev}, \ref{T2222lev}, \ref{T3lev}, \ref{T32lev}, \ref{T3d4} and \ref{T4lev}.
\end{proof}

\begin{coro} An anticommutative Engel algebra $A$ has infinite level $5$ if and only if it is stably isomorphic to $\eta_5$, $T^{2,2}({\scriptstyle\epsilon_{45}^{n}})$, $T^3({\scriptstyle\epsilon_{34}^{n}})$, $T^{2,2,2}({\scriptstyle\epsilon_{24}^{n}})$, $T^{3,2}({\scriptstyle\epsilon_{23}^{n}})$, $T^{2,2,2,2,2}$, $T^{3,2,2}$ or $T^4$.
\end{coro}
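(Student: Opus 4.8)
The plan is to mimic the proofs of Theorems \ref{lev3} and \ref{lev4}: first reduce to finitely many possibilities for $IW_1^{max}(A)$, then quote the section-by-section lemmas, and finally sort the resulting algebras by dimension. To begin, since $lev\big(IW_1^{max}(A)\big)\le lev(A)\le 5$, Table 1 shows that $IW_1^{max}(A)$ is isomorphic to one of ${\bf n}_3$, $T^{2,2}$, $T^3$, $T^{2,2,2}$, $T^{3,2}$, $T^{2,2,2,2}$, $T^4$, $T^{3,2,2}$, $T^{2,2,2,2,2}$ or $T^{3,3}$; I would also record the dimension range in which each occurs as read off from Table 1, in particular $T^{3,3}$ only for $n=7$, $T^{3,2,2}$ only for $n\ge 8$, $T^{2,2,2,2,2}$ only for $n\ge 11$, and $T^4$ with level five only for $n\ge 6$.

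Next I would dispose of the rigid cases. If $IW_1^{max}(A)\cong {\bf n}_3$, then by the results of \cite{kpv19} recalled after Table 1 the algebra $A$ is isomorphic to some $\eta_m$ (Table 2), which has level $m$, so $lev(A)=5$ if and only if $A\cong\eta_5$. If $IW_1^{max}(A)$ equals one of $T^4$ (with $n\ge 6$), $T^{3,2,2}$, $T^{2,2,2,2,2}$ or $T^{3,3}$ --- each of which has level exactly five by Table 1 --- then, since $A\to IW_1^{max}(A)$ is always a degeneration and $lev(A)\le 5$, a non-trivial such degeneration would force $lev(A)\ge 6$; hence $A\cong IW_1^{max}(A)$, and conversely each of these algebras has level five. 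The case $IW_1^{max}(A)\cong T^{2,2,2,2}$ produces nothing: by Lemma \ref{T2222lev} either $A\cong T^{2,2,2,2}$, which has level four, or $lev(A)\ge 7$.

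For the remaining cases I would simply invoke the lemmas of the previous sections: Lemma \ref{T22lev} yields $T^{2,2}({\scriptstyle\epsilon_{45}^n})$ when $IW_1^{max}(A)\cong T^{2,2}$; Lemma \ref{T222lev} yields $T^{2,2,2}({\scriptstyle\epsilon_{24}^n})$ and, in dimension seven only, $T^{2,2,2}({\scriptstyle\epsilon_{23}^4-\epsilon_{26}^7+\epsilon_{35}^7})$ when $IW_1^{max}(A)\cong T^{2,2,2}$; Lemma \ref{T3lev} yields $T^3({\scriptstyle\epsilon_{34}^n})$ and, in dimension six only, $T^3({\scriptstyle\epsilon_{45}^6})$ when $IW_1^{max}(A)\cong T^3$ with $n\ge 5$; Lemma \ref{T3d4} shows that for $IW_1^{max}(A)\cong T^3$ with $n=4$ one has $A\cong T^3$, which has level three and so does not arise here; Lemma \ref{T32lev} yields $T^{3,2}({\scriptstyle\epsilon_{23}^n})$ when $IW_1^{max}(A)\cong T^{3,2}$; and Lemma \ref{T4lev} yields $T^4({\scriptstyle\epsilon_{23}^5})$ when $IW_1^{max}(A)\cong T^4$ with $n=5$. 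Collating all the algebras produced against the minimal dimensions imposed by their defining tables then gives precisely the five dimension cases listed, and finishes the proof; the corollary on infinite level is then immediate since $lev_\infty$ only depends on the stable isomorphism class.

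I do not expect a genuine obstacle here, as every substantive degeneration and non-degeneration claim is already contained in the lemmas above; the only thing requiring care is the dimension bookkeeping --- keeping track that $T^{2,2,2,2}$ has level four and hence drops out, that $T^{2,2,2}({\scriptstyle\epsilon_{23}^4-\epsilon_{26}^7+\epsilon_{35}^7})$ lives only in dimension seven, and that $T^4$ behaves differently for $n=5$ (where $T^4$ itself has level four and the level-five algebra is $T^4({\scriptstyle\epsilon_{23}^5})$) than for $n\ge 6$ (where $T^4$ itself already has level five).
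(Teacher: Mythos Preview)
Your argument is correct, but note that what you have written is in substance a proof of Theorem~\ref{lev5} (and indeed it matches the paper's own proof of that theorem almost verbatim: reduce on $IW_1^{max}(A)$ via Table~1, handle ${\bf n}_3$ by Table~2, handle the level-five IW contractions by $A\cong IW_1^{max}(A)$, discard $T^{2,2,2,2}$ via Lemma~\ref{T2222lev}, and quote Lemmas~\ref{T22lev}, \ref{T222lev}, \ref{T3lev}, \ref{T32lev}, \ref{T3d4}, \ref{T4lev} for the rest). The corollary itself needs none of this, since Theorem~\ref{lev5} is already available: by definition $lev_\infty(A)=lev(A\oplus\kk^m)$ for all sufficiently large $m$, so $lev_\infty(A)=5$ if and only if $A\oplus\kk^m$ appears, for $m$ large, in the $n\ge 11$ list of Theorem~\ref{lev5}; that list is exactly the eight structures in the corollary, and ``$A\oplus\kk^m$ is represented by one of them for large $m$'' is precisely the statement that $A$ is stably isomorphic to one of them. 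Your final sentence says as much; the preceding paragraphs are redundant for the corollary, though perfectly accurate as a recapitulation of the theorem's proof.
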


\begin{coro}\label{Mal}
Any anticommutative Engel algebra of level not greater than $5$ is a Malcev algebra. Moreover, any such an algebra is a Lie algebra except the algebra $T^3({\scriptstyle\epsilon_{34}^{n}})$ and the seven-dimensional algebra $T^{2,2,2}({\scriptstyle\epsilon_{23}^4-\epsilon_{26}^7+\epsilon_{35}^7})$ in characteristic not equal to $3$.
\end{coro}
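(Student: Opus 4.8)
The plan is to reduce the statement to a finite verification by exploiting the explicit classification just obtained. By Theorems~\ref{lev3}, \ref{lev4} and~\ref{lev5}, together with the classifications of levels $\le 2$ recalled above, every anticommutative Engel algebra of level at most $5$ is isomorphic to $\kk^n$, to a Heisenberg algebra $\eta_m$ with $m\le 5$, or to one of the finitely many named algebras $T^{2,2}({\scriptstyle\cdots})$, $T^{2,2,2}({\scriptstyle\cdots})$, $T^{2,2,2,2}$, $T^{2,2,2,2,2}$, $T^3({\scriptstyle\cdots})$, $T^4({\scriptstyle\cdots})$, $T^{3,2}({\scriptstyle\cdots})$, $T^{3,3}$, $T^{3,2,2}$, $T^3$, $T^4$, $T^{3,2}$ appearing there. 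Since being a Lie algebra and being a Malcev algebra are isomorphism invariants and are insensitive to adjoining a direct summand with zero multiplication, it suffices to examine each of these named algebras once.

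First I would dispatch the Lie algebras. If $AA^2=0$, then every Jacobiator $(xy)z+(yz)x+(zx)y$ lies in $AA^2=0$, so $A$ is a Lie algebra; this covers at once the Heisenberg algebras $\eta_m$ and all the algebras of the form $T^{\overbrace{\scriptstyle 2,\dots,2}^{k}}({\scriptstyle\cdots})$, which are $2$-step nilpotent. For the remaining algebras of nilpotency class at least $3$ — namely $T^3$, $T^4$, $T^{3,2}$, $T^{3,3}$, $T^{3,2,2}$, $T^3({\scriptstyle\epsilon_{23}^{n-1}})$, $T^3({\scriptstyle\epsilon_{24}^{n}})$, $T^4({\scriptstyle\epsilon_{23}^5})$ and $T^{3,2}({\scriptstyle\epsilon_{23}^n})$ — a short explicit computation of the Jacobiator on the finitely many relevant basis triples shows that it vanishes identically, so these are Lie as well (for $T^{3,2}({\scriptstyle\epsilon_{23}^n})$ this was already observed in the proof of Lemma~\ref{T32lev}). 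Every Lie algebra is a Malcev algebra, so the claim holds for all of these.

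It then remains to treat $T^3({\scriptstyle\epsilon_{34}^n})$ and $T^{2,2,2}({\scriptstyle\epsilon_{23}^4-\epsilon_{26}^7+\epsilon_{35}^7})$. For the first, one computes $(e_1e_2)e_4+(e_2e_4)e_1+(e_4e_1)e_2=e_3e_4=e_n\neq0$, so it fails the Jacobi identity in every characteristic. For the second, the only Jacobiator of basis elements that is not identically zero turns out to be $(e_1e_2)e_3+(e_2e_3)e_1+(e_3e_1)e_2=-3e_7$, so this algebra is Lie precisely when $\charr\kk=3$ and fails Jacobi otherwise. To see that both algebras are nonetheless Malcev, I would note that their multiplication tables immediately give $A^2A^2=0$ and $A^4=0$, and that any anticommutative algebra with these two properties satisfies the Malcev identity $J(x,y,xz)=J(x,y,z)\,x$, where $J(x,y,z)=(xy)z+(yz)x+(zx)y$: indeed both sides are sums of four-fold products, and every four-fold product, in any bracketing, lies in $A^2A^2+A^4=0$. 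This completes the argument.

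The only genuinely non-formal parts are assembling the complete list of algebras from Theorems~\ref{lev3}--\ref{lev5} and the preceding sections, and the somewhat tedious but routine bookkeeping of the degree-three Jacobiators for the class-$3$ filiform-type algebras; the Malcev verification for the two exceptional algebras is immediate once one observes that all four-fold products vanish, so this is where I expect essentially no difficulty, the main effort being the enumeration and the Jacobi checks.
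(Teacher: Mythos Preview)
Your approach is correct and is the natural one for a corollary of the classification (the paper itself gives no separate proof). Two small bookkeeping slips to fix: (i) the seven-dimensional algebra $T^{2,2,2}({\scriptstyle\epsilon_{23}^4-\epsilon_{26}^7+\epsilon_{35}^7})$ is \emph{not} $2$-step nilpotent (here $e_2e_3=e_4$ and $e_1e_4=e_7$, so $A^3=\langle e_7\rangle$), so your blanket claim that ``all $T^{\overbrace{\scriptstyle 2,\dots,2}^{k}}({\scriptstyle\cdots})$ are $2$-step'' is false as stated---but since you treat this algebra separately anyway, this only requires rewording; (ii) you omitted $T^3({\scriptstyle\epsilon_{45}^6})$ from your list of class-$\ge 3$ algebras to be checked, though the Jacobi verification there is immediate. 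Your Malcev argument via $A^2A^2=0$ and $A^4=0$ is clean and correct for both exceptional algebras.
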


\begin{coro}\label{nilp} Any anticommutative Engel algebra of level not greater than five is nilpotent.
\end{coro}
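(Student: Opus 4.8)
\end{coro}
\begin{proof}
The plan is to read this off from the classification already established, rather than prove it by hand. By the theorem of \cite{kpv19} recalled after Table~2 (which disposes of levels $0$, $1$, $2$) together with Theorems \ref{lev3}, \ref{lev4} and \ref{lev5}, every anticommutative Engel algebra of level at most five is isomorphic to one of the algebras $\kk^n$, $\eta_m$ with $1\le m\le 5$, or one of the finitely many algebra structures (and one-parameter families thereof) written down explicitly in Tables 1, 3, 4, 5 and in the small one-row tables of Sections 5--9 (the various $T^{\dots}$ and $T^{\dots}({\scriptstyle\epsilon_{\dots}})$). So it is enough to check that every algebra on this list is nilpotent.

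For this I would invoke the observation made in Section~4: a structure $\mu$ with $\mu_{i,j}^k=0$ for all $k\le\max(i,j)$ represents a nilpotent algebra. Indeed, in the anticommutative case $\mu(e_i,e_i)=0$ and every nonzero $\mu(e_i,e_j)$ with $i\neq j$ lies in $V_{\max(i,j)+1}$, so a straightforward induction on $m$ gives $A^m\subseteq V_{m+1}$ for $m\ge 2$, hence $A^n=0$. It then remains to inspect the multiplication tables: in each of them every displayed product $e_ie_j$ with $i<j$ is a combination of basis vectors $e_k$ with $k>j$. For $\eta_m$ this is the inequality $2m+1>2i$; for the $T$-algebras it is guaranteed by the stated dimension restrictions (for instance $e_2e_3=e_{n-2}$ in $T^{2,2}({\scriptstyle\epsilon_{23}^{n-2}})$, where $n\ge 6$ forces $n-2>3$). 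Thus all listed algebras are nilpotent, and since they exhaust the anticommutative Engel algebras of level at most five, the corollary follows.

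The only work involved is the bookkeeping of running through every table and confirming that the target indices really do exceed $\max(i,j)$ under the relevant dimension bound; there is no conceptual obstacle here, and in fact a cruder estimate suffices in each case --- $\eta_m$ and the algebras $U\ltimes_{\phi}\kk^2$ satisfy $A^3=0$, while all the remaining listed algebras satisfy $A^6=0$, as one sees directly from the tables. It is perhaps worth recording (compare Remark \ref{nileng}) that the Engel hypothesis alone is nowhere near enough for this conclusion; it is precisely the bound on the level, funneled through the classification, that forces nilpotency.
\end{proof}
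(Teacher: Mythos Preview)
Your approach is correct and is exactly what the paper intends: the corollary is stated without proof because it follows by inspection of the classification in Theorems \ref{lev3}, \ref{lev4}, \ref{lev5} (together with the level $\le 2$ cases). Your explicit observation that a structure with $\mu_{i,j}^k=0$ for $k\le\max(i,j)$ is nilpotent, and the subsequent table-by-table check, simply spells out what the paper leaves implicit.
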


\begin{rema} Since all algebras in our classification are nilpotent Malcev algebras by Corollaries \ref{nilp} and \ref{Mal}, the classification of anticommutative Engel algebras with level not greater than five of dimension six can be obtained from \cite[Theorem 8]{kppv}.
\end{rema}

\bigskip

{\bf Acknowledgements.} The work was supported by the Russian Science Foundation research project number 19-71-10016. The author is a Young Russian Mathematics award winner and would like to thank its sponsors and jury.

\noindent{{\bf Addresses:}
\newline
Yury Volkov \\
Saint-Petersburg State University\\
Universitetskaya nab. 7-9, St. Peterburg, Russia\\
e-mail:  wolf86\_666@list.ru}
\end{document}